\newtheorem{theorem}{Theorem}[section]
\newtheorem{corollary}[theorem]{Corollary}
\newtheorem{lemma}[theorem]{Lemma}
\newtheorem{proposition}[theorem]{Proposition}
\newcommand{\R}{\mathbb{R}}
\newcommand{\C}{\mathbb{C}}
\newcommand{\N}{\mathbb{N}}
\newcommand{\ds}{\mathrm{ds}}
\newcommand{\h}{\mathcal{H}}
\newcommand{\s}{\mathbb{S}}
\newcommand{\p}{\partial}
\newcommand{\Dd}{\nabla}
\newcommand{\norm}[1]{\|#1\|}
\newcommand{\abs}[1]{\left|#1\right|}
\newcommand{\set}[1]{\left\{#1\right\}}
\newcommand{\para}[1]{\left(#1\right)}
\newcommand{\seq}[1]{\left<#1\right>}
\newcommand{\data}{\Vert \mathcal{N}_{A_1,q_1}-\mathcal{N}_{A_2,q_2} \Vert}
\newcommand{\sss}{\mathbb{S}^2}
\newcommand{\Rt}{\mathbb{R}^{3}}
\newcommand{\HAV}{\mathcal{H}_{A,q}}
\newcommand{\HAVB}{\mathcal{H}_{A_2,q_2}}
\newcommand{\rr}{\rho}
\begin{document}

\title[ Inverse time harmonic magnetic Schr\"odinger problem]{Stability estimate for an inverse problem for the time harmonic magnetic Schr\"odinger operator from the near and far field pattern }

\author[M.~Bellassoued]{Mourad Bellassoued}
\author[H.~Haddar]{Houssem Haddar}
\author[A.~Labidi]{Amal Labidi}

\address{M.~Bellassoued. Universit\'e de Tunis El Manar, Ecole Nationale d'ing\'enieurs de Tunis, ENIT-LAMSIN, B.P. 37, 1002 Tunis, Tunisia}
\email{mourad.bellassoued@enit.utm.tn }
\address{H.~Haddar, INRIA, ENSTA Paris Tech,  Institut Polytechnique de
Paris, Route de Saclay, 91128 Palaiseau, France}
\email{Houssem.Haddar@inria.fr}
\address{A.~Labidi. Universit\'e de Tunis El Manar, Ecole Nationale d'ing\'enieurs de Tunis, ENIT-LAMSIN, B.P. 37, 1002 Tunis, Tunisia}
\email{amal.abidi@enit.utm.tn}

\date{\today}
\subjclass[2010]{35R30, 81U40, 35J05}
\keywords{Inverse medium scattering, inverse problems, stability estimate, magnetic potential}

\begin{abstract}
We derive conditional stability estimates for inverse scattering problems related to time harmonic magnetic Schr\"odinger equation. We prove logarithmic type estimates for retrieving the magnetic (up to a gradient) and electric potentials from near field or far field maps. Our approach combines techniques from similar results obtained in the literature for inhomogeneous inverse scattering problems based on the use of geometrical optics solutions. 
\end{abstract}
\maketitle

\section{Introduction and main results}\label{S1}
\setcounter{equation}{0}
\subsection{Introduction}
This paper is concerned with the inverse scattering problem of recovering the magnetic and electric potentials in the magnetic Schr\"odinger model from near field or far field measurements at a fixed frequency. The forward model is as follows  (see for instance \cite{texbook5, texbook22b}).
Let $D\subset \R^3$ be a bounded open set with smooth boundary such that $\R^3\setminus D$ is connected and let $A=(a_1,a_2,a_3)\in W^{1,\infty}(\R^3)^3$ be a real valued vector modeling the magnetic potential and  $q\in L^\infty(\R^3)$ be a complex valued function with non negative imaginary part modeling the electric potential such that $\textrm{Supp} (A) \subset D$ and $\textrm{Supp} (q)\subset D$. The magnetic Schr\"odinger operator we are considering is 
\begin{equation}\label{1.1}
\HAV :=-(\nabla +iA)^2+ q =  -\Delta-Q_{A,q},
\end{equation}
where $Q_{A,q}$ is the first order operator given by
\begin{equation}\label{1.2}
Q_{A,q} v:=i\; \textrm{div}(Av)+iA\cdot \nabla v-(\abs{A}^2+q)v,\quad v\in H^1_{\mathrm{loc}}(\R^3).
\end{equation}

The direct scattering problem in the near field setting can be phrased as follows. Let $B$ be a smooth bounded and simply connected domain (typically a ball) containing $D$ with outward normal denoted by $\nu$ and let $y \in \partial B$ be the location of a point source.  The total field $u (\cdot, y)$ generated by the point source satisfies 
\begin{equation}\label{1.4-n}
\HAV u (\cdot, y)-k^2u(\cdot, y) = \delta_{y} \quad \mathrm{ in } \; \R^3,
\end{equation}
with $\delta_{y}$ denoting the Dirac distribution at $y$ and $k>0$ is the wave number. The total field is decomposed into 
\begin{equation}\label{1.3-n}
u_{A,q}(\cdot, y)=\Phi(\cdot, y)+u_{A,q}^s(\cdot, y)\quad \mathrm{ in } \; \R^3,
\end{equation}
where the scattered field $u_{A,q}^s(\cdot, y) \in H^2_{\mathrm{loc}}(\R^3)$ and satisfies the Sommerfeld radiation condition at infinity 
\begin{equation}\label{1.5}
\lim_{r\rightarrow \infty} r\Big( \p_r u^s -iku^s \Big)=0, \quad r=\vert x\vert
\end{equation}
uniformly with respect to $\hat{x}=\frac{x}{\vert x\vert}$. The incident field is given by
\begin{equation}\label{1.7}
\Phi(x,y):=\frac{1}{4\pi}\frac{e^{ik\abs{x-y}}}{\abs{x-y}},\quad x\neq y
\end{equation}
 and is the fundamental solution of the Helmholtz equation, i.e. satisfying \eqref{1.4-n} for $A=q=0$ together with the Sommerfeld radiation condition. 
\smallskip
 
 The first inverse problem that we shall investigate is to recover $A$ and $q$ from the knowledge  $u_{A,q}^s(x,y)$ for all $(x,y) \in \partial B\times \partial B$. Defining the  near field operator 
$\mathcal{N}_{A,q}:\; L^2(\p B) \rightarrow L^2(\p B),$ as 
 \begin{align}\label{NN}
\mathcal{N}_{A,q} h(x) :=\int_{\partial B} u^s_{A,q}(x,y) h(y) \, \ds(y), \quad x\in \p B,
\end{align}
where $u^s_{A,q}(\cdot, y)$ is given by \eqref{1.3-n} and satisfying \eqref{1.5}, the inverse problem in the near field setting can be equivalently stated as identifying  $A$ and $q$ from the knowledge of   $\mathcal{N}_{A,q}$.
\smallskip

The direct scattering problem in the far field setting formally corresponds with letting $|y|\to \infty$ in the direction $-d$ with $d\in\mathbb{S}^2$ (the unit sphere of $\R^3$) and can be phrased as follows: Given an incident plane wave $u^i(x, d)=e^{ik x\cdot d}$, $x\in\R^3$, seek a total field $u_{A,q}(\cdot, d)$ that satisfies 
\begin{equation}\label{1.4}
\HAV u (\cdot, d)-k^2u(\cdot, d) = 0 \quad \mathrm{ in } \; \R^3
\end{equation}
and can be decomposed into 
\begin{equation}\label{1.3}
u_{A,q}(\cdot, d)=u^i(\cdot, d)+u_{A,q}^s(\cdot, d) \quad \mathrm{ in } \; \R^3,
\end{equation}
where the scattered field $u_{A,q}^s(\cdot, d) \in H^2_{\mathrm{loc}}(\R^3)$ and satisfies the Sommerfeld radiation condition. The latter implies in particular that the scattered field has the following asymptotic behavior as $|x| \to \infty$,
\begin{equation}\label{1.10}
u_{A,q}^s(x,d)=\frac{e^{ik\vert x\vert}}{\vert x\vert}\Big( u_{A,q}^\infty (\hat{x},d)+O\Big( \frac{1}{\vert x\vert}\Big)\Big), \quad 
\end{equation}
where $u_{A,q}^\infty(\cdot,d)$ is the so-called far field pattern.  The second inverse problem that we shall consider is the identification of $A$ and $q$ from the knowledge of  $u_{A,q}^\infty(\hat{x}, d)$ for all $ (\hat{x}, d) \in \mathbb{S}^2 \times \mathbb{S}^2$. 
\smallskip

Our main goal for both settings is to investigate  the conditional stability of recovering  the electric potential and the magnetic field from the given so-called full aperture  measurements.   Our strategy relies  the use of geometrical optics solutions to the magnetic Schr\"odinger equation to derive  estimates on the Fourier coefficients of these potentials under additional regularity assumptions. The stability for the electric potential requires the use of an adapted version of the Helmholtz decomposition. Transferring the results from the near field operator to the far field measurements necessitate a careful study of the near field to far field mapping. The reciprocity between the operators ${\mathcal{H}_{A,q}} -k^2$ and ${\mathcal{H}_{-A,q}} -k^2$ (supplemented with the radiation condition) and mixed reciprocity between far field and near field play a central role in the proofs.  
\smallskip

In the absence of the magnetic potential $A$, the study of the identifiability  of $q$ from  full aperture measurements is one of the first foundational problems in inverse scattering theory and we refer to \cite{texbook017,texbook023, texbook026, texbook030} for pioneering uniqueness results under various (regularity) assumptions. 
In the presence of a magnetic potential $A$, we remind that there is an obstruction to uniqueness for both near field and far field settings (as has been noted in \cite{texbook9} for instance). In fact, the scattered field outside a ball $B$ containing $D$ is invariant under the gauge transformation of the magnetic potential. Namely, given $\varphi\in W^{2,\infty}(\Rt)$ with support compactly embedded in $B$ and letting $\tilde{u}=u(x)e^{-i\varphi (x)}$ one easily observes that 
\begin{equation}\label{1.12}
\mathcal{H}_{A+\nabla\varphi,q}\tilde{u}:=-(\nabla +i(A+\nabla \varphi ))^2\tilde{u}+q(x)\tilde{u}= e^{-i\varphi (x)} \mathcal{H}_{A,q}u.
\end{equation}
Since $\varphi =0$ outside $B$,  $\tilde{u}$ then satisfies the same equation as $u$, namely \eqref{1.3-n} (respectively \eqref{1.3}) in the near field setting (respectively in the far field setting) with $\mathcal{H}_{A,q}$ replaced by $\mathcal{H}_{A+\nabla\varphi,q}$. Let us denote by $u^s_{A+\nabla\varphi,q}$ the scattered field associated with the potentials $A+\nabla\varphi$ and $q$. From uniqueness of solutions to the above stated scattering problems one easily deduces  that for all $y \in \partial B$ and $d \in \s^2$
$$
u^s_{A+\nabla\varphi,q}(\cdot,y) = (e^{-i\varphi (x)} -1) \Phi(\cdot, y) + e^{-i\varphi (x)} u_{A,q}^s(\cdot,y)  \quad \mathrm{ in } \; \R^3,
$$
$$
u^s_{A+\nabla\varphi,q}(\cdot,d) = (e^{-i\varphi (x)} -1) u_i(\cdot, d) + e^{-i\varphi (x)} u_{A,q}^s(\cdot,d) \quad \mathrm{ in } \; \R^3.
$$
This clearly shows that $u^s_{A+\nabla\varphi,q}(\cdot,y)  = u^s_{A,q}(\cdot,y) $ and $u^s_{A+\nabla\varphi,q}(\cdot,d) = u^s_{A,q}(\cdot,d)$ outside $D$ and therefore, the magnetic potential $A$ cannot be uniquely determined from  far field or near field measurements outside $B$. It indicates that the best we can expect from the knowledge of the near field operator $\mathcal{N}_{A,q}$ or the far field $u_{A,q}^\infty$ is to identify $(A,q)$ modulo a gauge transformation of $A$. When $\textrm{Supp}(A)\subset D$ is known, the problem  may be equivalently reformulated as whether the magnetic field defined by the $2$-form associated with the vector $A$,
\begin{equation}\label{1.14}
\textrm{curl}\, A:=\frac{1}{2}\sum_{i,j=1}^3\para{\p_{x_j}a_i-\p_{x_i}a_j} dx_j\wedge dx_i,
\end{equation}
and the electric potential $q$ can be retrieved from far field or near field measurements. The uniqueness for similarly stated inverse problems has been established  in \cite{texbook22} for $L^\infty$ regularity of the coefficients. It has been studied in earlier works under more regularity assumptions in \cite{NSU95} and for small perturbations in \cite{texbook9,texbook5}. We also quote the recent uniqueness result in \cite{texbook5.}  for measurements associated with finite number of incident waves but with full frequency range.

Concerning stability results with full aperture measurements, in \cite{texbook3} H\"ahner and Hohage  established logarithmic stability estimates for the case $A=0$. These results improve previous ones due to Stefanov \cite{texbook26.} by giving an explicit exponent in the logarithmic estimate and using the $L^2$-norm for far field patterns. In \cite{texbook28} Isaev and Novikov proved stability estimates with explicit dependence on the wave number. 
We hereafter shall follow a similar approach as in \cite{texbook3, texbook26.} to study the case when one would like to simultaneously recover $\textrm{curl}\, A$ and $q$ from full aperture measurements in the light of geometrical optics solutions developed in \cite{texbook4, texbook25, texbook9, texbook6} for various context in relation with the inverse problem we are interested in.  We establish the stability result first for the magnetic field in the case of near field data. We then employ a carefully designed Helmholtz decomposition to infer the stability result for the electric potential. The derivation of the results for far field data are obtained after establishing some key properties relating this data to the near field data.

For bounded domains, the inverse problem with full aperture measurements  corresponds with measuring the global Dirichlet to Neumann map. For this problem Tzou proved in \cite{texbook6} $\mathrm{log}$-type stability estimate for $H^{-1}$ norms of the coefficients, assuming that the magnetic potentials are in $W^{2,\infty}$ and the electric potential are in $L^\infty$. We here consider stability with respect to the $L^\infty$ norm with explicit link between the additional needed regularity for the coefficients and the logarithm exponent.   Let us finally indicate that   uniqueness and  log-log stability results with partial data have been also studied by many authors in the literature  (see for instance \cite{texbook6, Caro-Pohjola, P-MR2018, DKSU2007, texbook4}) but are not addressed in the present work.
\subsection{Main stability results}

We here state the main results of this paper concerning conditional log-stability reconstruction of the magnetic field  $\textrm{curl}\, A$ given by (\ref{1.14}) and the electric potential $q$ from knowledge of the full aperture far field measurements, i.e., $u_{A,q}^\infty(\hat{x},d)$ for any $(\hat{x},d)\in\mathbb{S}^2\times \mathbb{S}^2$ or from knowledge of the near field operator $\mathcal{N}_{A,q}$. 
\smallskip

Let us first indicate the required conditions for admissible compactly supported magnetic potentials $A$ and electric potentials $q$. Let $M>0$ and $\sigma>0$ be given.  We define the class of admissible magnetic potentials $\mathcal{A}_\sigma(M)$ by
\begin{multline}\label{1.15}
\mathcal{A}_\sigma(M):=\{A\in W^{2,\infty}(\R^3,\R^3),\,\mathrm{Supp}(A)\subset D,\,\cr
\norm{A}_{W^{2,\infty}}\leq M,\, \textrm{and}\, \norm{\widehat{\textrm{curl} A}}_{L^1_\sigma(\R^3)}\leq M\},
\end{multline}
where $\widehat{v}$ denotes the Fourier transform of $v$ and $L^1_\tau(\R^3)$ is the weighted $L^1(\R^3)$ space with norm
$$
\norm{v}_{L^1_\tau(\R^3)}:=\int_{\R^3} (1+|\xi|^2)^{\tau/2} |v(\xi)| d\xi.
$$
Given $M>0$ and $\gamma>0$, we define the class of admissible electric potentials $\mathcal{Q}_\gamma(M)$ by
\begin{multline}\label{1.16}
\mathcal{Q}_\gamma (M):=\{q\in L^{\infty}(\R^3,\C),\,\Im(q) \ge 0, \, \mathrm{Supp}(q)\subset D,\,\cr
\norm{q}_{L^{\infty}(D)}\leq M\, \textrm{and}\, \norm{\widehat{q}}_{L^1_\gamma(\R^3)}\leq M\}.
\end{multline}
The first main result of this paper is the following log-stability for the magnetic field $\mathrm{curl}\, A$ and the electric potential $q$ from the near field measurements.
\begin{theorem}\label{T1.1}
 Let $M>0$, $\sigma>0$ and $\gamma>0$. Then there exists a constant $C>0$ such that for any  $(A_j,q_j)\in\mathcal{A}_\sigma(M)\times \mathcal{Q}_\gamma(M)$, $j=1,2$, we have
$$
\norm{\mathrm{curl}(A_1-A_2)}_{L^\infty}
\leq C \big(\norm{\mathcal{N}_{A_1,q_1}-\mathcal{N}_{A_2,q_2}}^{1/2}+\big(\log^-(\norm{\mathcal{N}_{A_1,q_1}-\mathcal{N}_{A_2,q_2}})\big)^{-\frac{\sigma}{(\sigma+3)}}\big),
$$
$$
 \norm{q_2-q_1}_{L^\infty} \leq C \big(\norm{\mathcal{N}_{A_1,q_1}-\mathcal{N}_{A_2,q_2}}^{1/2} + \big(\log^-(\norm{\mathcal{N}_{A_1,q_1}-\mathcal{N}_{A_2,q_2}})\big)^{-\frac{\gamma \sigma}{(\sigma+3)(2\gamma+3)}}   \big),
$$
where $\log^-(t):=\max(-\log(t),0)$ for $t>0$. Here $C$  depends only on $B$, $M$, $\sigma$ and $\gamma$. 
\end{theorem}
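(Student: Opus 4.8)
The plan is to adapt the Sylvester--Uhlmann complex geometrical optics (CGO) method to the magnetic setting, reducing the stability question to frequency-localized estimates on $\widehat{\mathrm{curl}(A_1-A_2)}$ and $\widehat{q_2-q_1}$. The first step is to convert the difference of near field operators into an Alessandrini-type bilinear identity. Pairing a solution $u_1\in H^2(B)$ of $(\mathcal{H}_{A_1,q_1}-k^2)u_1=0$ with a solution $u_2$ of the \emph{transposed} equation $(\mathcal{H}_{-A_2,q_2}-k^2)u_2=0$ (this is where the reciprocity between $\mathcal{H}_{A,q}-k^2$ and $\mathcal{H}_{-A,q}-k^2$ enters), integration by parts over $B$ kills the second-order parts and leaves only the first- and zeroth-order potential differences:
\begin{equation*}
\left|\int_{B}\big(Q_{A_1,q_1}-Q_{A_2,q_2}\big)u_1\,u_2\,\dd x\right|\le C\,\data\,\norm{u_1}_{H^1(B)}\norm{u_2}_{H^1(B)}.
\end{equation*}
Justifying that the boundary contribution is controlled by $\data$ is the scattering-specific part of the argument and rests on the relation between $\mathcal{N}_{A,q}$ and the Cauchy data on $\partial B$ together with a density argument for products of solutions.

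Next I would insert CGO solutions. Fixing $\xi\in\R^3$ and a large parameter $\tau$, choose $\zeta_1,\zeta_2\in\C^3$ with $\zeta_j\cdot\zeta_j=k^2$, $|\mathrm{Im}\,\zeta_j|\sim\tau$, and $\zeta_1+\zeta_2=-\xi$ so that the product $u_1u_2$ carries the oscillation $e^{-i\xi\cdot x}$. Using the solutions of \cite{texbook4, texbook25, texbook9, texbook6}, take $u_j=e^{i\zeta_j\cdot x}(a_j+r_j)$ with amplitudes solving the appropriate magnetic transport equations and remainders satisfying $\norm{r_j}_{L^2(B)}\le C\tau^{-1}$; these grow at most like $e^{C\tau}$ on $\overline B$. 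Substituting into the identity, the transport equations cancel the $\zeta_j\cdot A_j$ contributions, so the surviving first-order term assembles the antisymmetric combination $\widehat{\mathrm{curl}(A_1-A_2)}(\xi)$ (contracted with a fixed unit vector), all remaining terms being $O(\tau^{-1})$ or bounded by $\data\,e^{C\tau}$. This yields, for every $\tau\ge c(1+|\xi|)$,
\begin{equation*}
\big|\widehat{\mathrm{curl}(A_1-A_2)}(\xi)\big|\le C\big(e^{C\tau}\,\data+\tau^{-1}\big).
\end{equation*}
Passing to $L^\infty$ via $\norm{\mathrm{curl}(A_1-A_2)}_{L^\infty}\le\int_{\R^3}|\widehat{\mathrm{curl}(A_1-A_2)}|$, I split the integral at radius $\rho$: the tail is $\le CM\rho^{-\sigma}$ by the a priori bound $\norm{\widehat{\mathrm{curl}\,A}}_{L^1_\sigma}\le M$, while on $|\xi|\le\rho$ the display above with $\tau\sim\log^-(\data)$ makes $e^{C\tau}\data\sim\data^{1/2}$ and leaves $\rho^3\tau^{-1}$. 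Optimizing $\rho\sim(\log^-(\data))^{1/(\sigma+3)}$ balances $\rho^3\tau^{-1}$ against $\rho^{-\sigma}$ and produces the stated exponent $\sigma/(\sigma+3)$ together with the $\data^{1/2}$ term.

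For the electric potential I would use the adapted Helmholtz decomposition $A_1-A_2=\nabla\varphi+\psi$ with $\mathrm{div}\,\psi=0$: the solenoidal part $\psi$ is reconstructed from $\mathrm{curl}(A_1-A_2)$ (Biot--Savart) and is therefore already controlled by the previous step, while the gauge part $\nabla\varphi$ is reabsorbed into a modification of the CGO amplitudes. The zeroth-order term of the identity then isolates $\widehat{q_2-q_1}(\xi)$ up to the quadratic self-interaction $\widehat{|A_1|^2-|A_2|^2}=\widehat{(A_1-A_2)\cdot(A_1+A_2)}$, which I estimate through the already-bounded $\psi$ and $\nabla\varphi$. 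A second low/high frequency split, now using $\norm{\widehat q}_{L^1_\gamma}\le M$, again produces a logarithmic rate, but the magnetic contamination couples the cut-off for $q$ to the (only logarithmically small) $\mathrm{curl}\,A$ estimate; balancing these two competing frequency scales is exactly what degrades the exponent to $\tfrac{\gamma\sigma}{(\sigma+3)(2\gamma+3)}$.

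The main obstacle is this last stage. The gauge non-uniqueness forbids recovering $A_1-A_2$ itself, so the quadratic term $|A_1|^2-|A_2|^2$ in $Q_{A_1,q_1}-Q_{A_2,q_2}$ cannot be eliminated directly and must be reabsorbed through the Helmholtz decomposition and the magnetic estimate, which is only logarithmically small; carrying this logarithm through the second frequency split while optimizing a \emph{second} cut-off is what generates the compound exponent. A secondary difficulty is the very first step: in the scattering setting one does not have a Dirichlet-to-Neumann map directly, so the reduction to the bilinear identity must be routed through mixed reciprocity and the near-to-far field map before the CGO machinery can be applied.
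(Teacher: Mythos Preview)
Your proposal is correct and follows essentially the same route as the paper: an Alessandrini-type integral inequality obtained via the reciprocity $(\mathcal{N}_{A,q})^t=\mathcal{N}_{-A,q}$ together with an extension of interior solutions to outgoing ones through a single-layer ansatz, CGO solutions with magnetic transport amplitudes, a low/high frequency split using $\|\widehat{\mathrm{curl}\,A}\|_{L^1_\sigma}\le M$ to get the $\sigma/(\sigma+3)$ exponent, then a Helmholtz decomposition combined with the gauge invariance of $\mathcal{N}$ to isolate $\widehat q$, and a second frequency split with a coupled parameter $\kappa$ yielding the compound exponent $\gamma\sigma/((\sigma+3)(2\gamma+3))$. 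One small correction to your closing remark: for Theorem~\ref{T1.1} no near-to-far field map or mixed reciprocity is needed---the link between $\mathcal{N}_{A,q}$ and the bilinear form comes directly from the single-layer/transmission representation on $\partial B$ (Lemmas~\ref{L2.4}--\ref{L3.3}); the far-field passage enters only for Theorems~\ref{T1.2} and~\ref{T1.3}.
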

An exactly similar stability result can be deduced for far field measurements if one uses the following very restrictive norm on the measurements.  Let
\begin{equation}\label{1.17}
\Gamma:=\set{(\ell,m),\, \ell\in\N\cup\{0\},\,m\in[\![ -\ell,\ell ]\!]},
\end{equation}
 and denote by $Y_\ell^m$, $(\ell,m)\in\Gamma$ the complete system of special harmonics on $\s^2$. For a far field pattern, $u^{\infty}$ we denote by $\mu_{(\ell_1,m_1;\ell_2,m_2)}$, $(\ell_i,m_i)\in\Gamma$, $i=1,2$ its Fourier coefficients given by 
\begin{equation}\label{1.18}
\mu_{(\ell_1, m_1; \ell_2, m_2)}:= \int_{\s^2}\int_{\s^2} u^{\infty}(\hat{x},d)\overline{Y^{m_1}_{\ell_1}}(\hat{x})\overline{Y^{m_2}_{\ell_2}}(d) \; \ds(\hat{x})\; \ds(d).
\end{equation}
Let $a>0$ such that $D \subset \{x\in \R^3; |x|<a\}$. Following \cite{texbook3}, we then introduce the following norm  
\begin{equation}\label{1.19}
\Vert u^{\infty} \Vert_{\mathcal{F}}^2 := \sum_{(\ell_1,m_1)\in\Gamma} \sum_{(\ell_2,m_2)\in\Gamma} \Big( \frac{2\ell_1+1}{eka}\Big)^{2\ell_1}  \Big(\frac{2\ell_2+1}{eka}\Big)^{2\ell_2} \left\vert \mu_{(\ell_1, m_1; \ell_2, m_2)}\right\vert^2.
\end{equation}
In Lemma \ref{L3.5} below we prove that this norm is finite for all far fields $u_{A,q}^\infty$ with  $A\in\mathcal{A}_\sigma(M)$ and $q\in\mathcal{Q}_\gamma(M)$.  Using Lemma \ref{L3.6} and Theorem \ref{T1.2}, we immediately get.
\smallskip

\begin{theorem}\label{T1.2}
Let $M>0$, $\sigma>0$ and $\gamma>0$. Then there exists a constant $C>0$ such that for any  $(A_j,q_j)\in\mathcal{A}_\sigma(M)\times \mathcal{Q}_\gamma(M)$, $j=1,2$, we have
$$
\norm{\mathrm{curl}(A_1-A_2)}_{L^\infty}
\leq C \big(\norm{u^\infty_{A_1,q_1}-u^\infty_{A_2,q_2}}_{\mathcal{F}}^{1/2}+ \big(\log^-(\norm{u^\infty_{A_1,q_1}-u^\infty_{A_2,q_2}}_{\mathcal{F}})\big)^{-\frac{\sigma}{(\sigma+3)}} \big),
$$
and 
$$
 \norm{q_2-q_1}_{L^\infty} \leq C \big(\norm{u^\infty_{A_1,q_1}-u^\infty_{A_2,q_2}}_{\mathcal{F}}^{1/2} + \big(\log^-(\norm{u^\infty_{A_1,q_1}-u^\infty_{A_2,q_2}}_{\mathcal{F}}))^{-\frac{\gamma \sigma}{(\sigma+3)(2\gamma+3)}}   \big).
$$
 Here $C$ depends only on $D$, $a$, $M$, $\sigma$ and $\gamma$. 
\end{theorem}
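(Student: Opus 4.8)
The plan is to derive Theorem~\ref{T1.2} from the near field estimate of Theorem~\ref{T1.1} by bounding the operator norm of $\mathcal{N}_{A_1,q_1}-\mathcal{N}_{A_2,q_2}$ in terms of the far field norm $\norm{u^\infty_{A_1,q_1}-u^\infty_{A_2,q_2}}_{\mathcal{F}}$. The central step, which I expect to be isolated as Lemma~\ref{L3.6}, is the inequality
\[
\norm{\mathcal{N}_{A_1,q_1}-\mathcal{N}_{A_2,q_2}}\leq C_0\,\norm{u^\infty_{A_1,q_1}-u^\infty_{A_2,q_2}}_{\mathcal{F}}.
\]
Once this is available I would substitute it into the two estimates of Theorem~\ref{T1.1} and use the monotonicity of their right-hand sides. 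Writing $N:=\norm{\mathcal{N}_{A_1,q_1}-\mathcal{N}_{A_2,q_2}}$ and $F:=\norm{u^\infty_{A_1,q_1}-u^\infty_{A_2,q_2}}_{\mathcal{F}}$, the power terms are immediate since $N^{1/2}\leq C_0^{1/2}F^{1/2}$, while for the logarithmic terms the bound $N\leq C_0F$ gives, for $F$ small enough, $\log^-(N)=-\log N\geq -\log F-\log C_0\geq\tfrac12\log^-(F)$, so that $(\log^-(N))^{-p}\leq 2^{p}(\log^-(F))^{-p}$ for the relevant exponents $p$. For $F$ bounded away from $0$ the left-hand sides are controlled by $2M$ and can be absorbed into the final constant $C$, and for $F\geq 1$ the right-hand sides are infinite so there is nothing to prove. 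This yields both inequalities with $C$ depending only on $D$, $a$, $M$, $\sigma$, $\gamma$; finiteness of $F$ for admissible potentials is guaranteed by Lemma~\ref{L3.5}.

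To prove the key inequality I would fix admissible $(A,q)$ and expand the near field kernel. Taking $B$ to be the ball of radius $R\geq a$ centred at the origin, the kernel $u^s_{A,q}(x,y)$ is a radiating solution outside $D$ in the variable $x$; moreover, by the reciprocity between $\mathcal{H}_{A,q}$ and $\mathcal{H}_{-A,q}$ one has $u^s_{A,q}(x,y)=u^s_{-A,q}(y,x)$, so it is also radiating in $y$. Consequently, on $\partial B\times\partial B$ it admits the double expansion
\[
u^s_{A,q}(x,y)=\sum_{(\ell_1,m_1)\in\Gamma}\sum_{(\ell_2,m_2)\in\Gamma}\alpha_{\ell_1,m_1;\ell_2,m_2}\,h^{(1)}_{\ell_1}(kR)\,h^{(1)}_{\ell_2}(kR)\,Y^{m_1}_{\ell_1}(\hat x)\,Y^{m_2}_{\ell_2}(\hat y),
\]
with $h^{(1)}_\ell$ the spherical Hankel functions. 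Letting $|x|,|y|\to\infty$ and invoking the mixed reciprocity (point-source far field versus plane-wave scattered field, which again exchanges $A$ and $-A$) together with the far field reciprocity for the $(-A,q)$ problem, the two sign changes of $A$ cancel and the same coefficients $\alpha_{\ell_1,m_1;\ell_2,m_2}$ are seen to determine the plane-wave far field coefficients $\mu_{(\ell_1,m_1;\ell_2,m_2)}$ of the original $(A,q)$ problem defined in \eqref{1.18}, up to the explicit factors $k^{-2}i^{-\ell_1-1}i^{-\ell_2-1}$ and a relabelling $\hat y\mapsto-\hat y$ producing only the unimodular factor $(-1)^{\ell_2}$. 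Crucially, the linear map sending far field coefficients to kernel coefficients is \emph{independent of the potentials}, so it applies verbatim to the difference: $\mathcal{N}_{A_1,q_1}-\mathcal{N}_{A_2,q_2}$ has kernel coefficients equal to these universal factors times $\mu^{(1)}_{(\ell_1,m_1;\ell_2,m_2)}-\mu^{(2)}_{(\ell_1,m_1;\ell_2,m_2)}$.

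It then remains to compare the two sets of weights. Bounding the operator norm by the Hilbert--Schmidt norm and using orthonormality of the spherical harmonics,
\[
\norm{\mathcal{N}_{A_1,q_1}-\mathcal{N}_{A_2,q_2}}^2\leq \sum_{(\ell_1,m_1)\in\Gamma}\sum_{(\ell_2,m_2)\in\Gamma}\big|h^{(1)}_{\ell_1}(kR)\big|^2\,\big|h^{(1)}_{\ell_2}(kR)\big|^2\,\big|\alpha_{\ell_1,m_1;\ell_2,m_2}\big|^2 .
\]
Inserting $\alpha=(\text{universal factor})\,(\mu^{(1)}-\mu^{(2)})$ and using the asymptotics $|h^{(1)}_\ell(t)|^2\sim c\,t^{-2}\big(\tfrac{2\ell+1}{et}\big)^{2\ell}$, each factor $|h^{(1)}_\ell(kR)|^2$ differs from the weight $\big(\tfrac{2\ell+1}{eka}\big)^{2\ell}$ of \eqref{1.19} by a bounded constant times $(a/R)^{2\ell}$. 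Since $R\geq a$ this is uniformly bounded in $\ell$ (and decays when $R>a$), which gives precisely
\[
\norm{\mathcal{N}_{A_1,q_1}-\mathcal{N}_{A_2,q_2}}^2\leq C_0^2\,\norm{u^\infty_{A_1,q_1}-u^\infty_{A_2,q_2}}_{\mathcal{F}}^2 ,
\]
establishing Lemma~\ref{L3.6}.

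The hard part is the reciprocity bookkeeping in the second paragraph: one must verify that, for the magnetic operator, the composition of point-source/plane-wave mixed reciprocity with far field reciprocity returns the far field of the original potential $(A,q)$ rather than of $(-A,q)$, and track all normalizing constants so that the universal factors are explicit. The matching of the Hankel weights to the super-exponential weights of the $\mathcal{F}$-norm is the other delicate point, but it is forgiving because the exponential factor $(a/R)^{2\ell}$ dominates any bounded discrepancy; the choice $R\geq a$, legitimate since one is free to take $a$ with $D\subset\{x\in\R^3;|x|<a\}\subset B$, is exactly what makes the estimate run in the correct direction.
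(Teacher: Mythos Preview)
Your proposal is correct and follows essentially the same route as the paper: Theorem~\ref{T1.2} is deduced from Theorem~\ref{T1.1} via Lemma~\ref{L3.6}, and Lemma~\ref{L3.6} is proved from the spherical-harmonic expansion of $u^s_{A,q}(x,y)$ (isolated as Lemma~\ref{L4.4.} and established in Appendix~B using exactly the reciprocity chain you outline) together with the Hankel bound~\eqref{3.20}. The paper simply takes $R=a$, which removes the $(a/R)^{2\ell}$ factor and makes~\eqref{3.20} directly applicable, and records the explicit coefficient as $-\tfrac{k^2}{4\pi}\,i^{\ell_1-\ell_2}$.
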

One can also obtain a slightly modified stability result using the $L^2$ norm of the measurements following the method in \cite{texbook3}. It is summarized in the following theorem.
\begin{theorem}\label{T1.3}
Let $M>0$, $\sigma>0$, $\gamma>0$ and  $\varepsilon>0$. Then there exist two constants $C>0$ and $\delta >0$  such that for all  $(A_j,q_j)\in\mathcal{A}_\sigma(M)\times \mathcal{Q}_\gamma(M)$, $j=1,2$ verifying $\norm{u^\infty_{A_1,q_1}-u^\infty_{A_2,q_2}}_{L^2(\s^2 \times \s^2)} < \delta$ we have
\begin{equation*}
\norm{\mathrm{curl}(A_1-A_2)}_{L^\infty(D)}\leq C \big( \log^-(\norm{u^\infty_{A_1,q_1}-u^\infty_{A_2,q_2}}_{L^2(\s^2 \times \s^2)}\big)^{-\frac{\sigma}{\sigma+3}+ \epsilon},
\end{equation*}
\begin{equation*}
\norm{q_2-q_1}_{L^\infty(D)} \leq C \big(\log^-(\norm{u^\infty_{A_1,q_1}-u^\infty_{A_2,q_2}}_{L^2(\s^2 \times \s^2)}))^{-\frac{\gamma \sigma}{(\sigma+3)(2\gamma+3)}+\epsilon}.
\end{equation*}
 Here $C$ depends only on $D$, $a$, $M$, $\sigma$, $\epsilon$, $\delta$ and $\gamma$. 
\end{theorem}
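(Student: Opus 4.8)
\emph{Proof proposal.} The plan is to deduce Theorem~\ref{T1.3} from the $\mathcal{F}$-norm estimate of Theorem~\ref{T1.2} by establishing a conditional interpolation inequality that controls $\norm{u^\infty}_{\mathcal{F}}$ by the much weaker norm $\norm{u^\infty}_{L^2(\s^2\times\s^2)}$, at the cost of an arbitrarily small loss in the logarithmic exponent. Throughout write $u^\infty := u^\infty_{A_1,q_1}-u^\infty_{A_2,q_2}$ and $\epsilon_0 := \norm{u^\infty}_{L^2(\s^2\times\s^2)}$, let $w_\ell(a) := \big(\tfrac{2\ell+1}{eka}\big)^{2\ell}$ be the weights in \eqref{1.19}, and let $\mathcal{F}(r)$ denote the norm \eqref{1.19} with $a$ replaced by $r$. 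Since $L^2$ is weaker than $\mathcal{F}$, the whole content of the theorem lies in this interpolation; the rest is inherited directly from Theorem~\ref{T1.2}.

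The key step is a frequency splitting combined with an a priori bound in a strictly stronger norm. Because $D$ is bounded, I would fix a radius $a_0$ with $D\subset\{|x|<a_0\}$ and $a_0<a$; Lemma~\ref{L3.5}, applied with radius $a_0$, then furnishes a constant $C_0=C_0(D,a_0,M,\sigma,\gamma)$ with $\norm{u^\infty_{A_j,q_j}}_{\mathcal{F}(a_0)}\le C_0$, whence $\norm{u^\infty}_{\mathcal{F}(a_0)}\le 2C_0$ by the triangle inequality. The crucial algebraic identity is $w_\ell(a)=w_\ell(a_0)\,\theta^{2\ell}$ with $\theta:=a_0/a\in(0,1)$. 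Splitting the double sum \eqref{1.19} at a cutoff $N\in\N$, I would bound the low part $\{\ell_1,\ell_2\le N\}$ by $\big(\max_{\ell\le N}w_\ell(a)\big)^2\sum|\mu|^2\le w_N(a)^2\,\epsilon_0^2$ (using that $w_\ell(a)$ is increasing for $\ell$ large, together with Parseval's identity for the orthonormal system $\{Y_{\ell_1}^{m_1}\otimes Y_{\ell_2}^{m_2}\}$ on $\s^2\times\s^2$), and the complementary high part, on which $\max(\ell_1,\ell_2)>N$ forces one factor $\theta^{2\ell_i}\le\theta^{2N}$ while the other is $\le1$, by $\theta^{2N}\norm{u^\infty}_{\mathcal{F}(a_0)}^2\le 4C_0^2\,\theta^{2N}$. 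This yields
\[
\norm{u^\infty}_{\mathcal{F}(a)}^2\;\le\; w_N(a)^2\,\epsilon_0^2 \;+\; 4C_0^2\,\theta^{2N}.
\]

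It remains to optimise in $N$ and to feed the outcome into Theorem~\ref{T1.2}, and this is where the main obstacle lies. The two terms are genuinely mismatched: $w_N(a)^2=\exp\big(4N\log N\,(1+o(1))\big)$ grows \emph{super}-exponentially, whereas the tail $\theta^{2N}=\exp(-2N|\log\theta|)$ decays only exponentially. Writing $L:=\log^-\epsilon_0$ and balancing the two terms forces $N\log N\sim L/2$, i.e.\ $N\sim L/(2\log L)$, and at this value both terms are of size $\exp(-c\,L/\log L)$ for some $c=c(\theta)>0$; hence $\log^-\norm{u^\infty}_{\mathcal{F}(a)}\ge c'\,L/\log L$ once $\epsilon_0$ is small. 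Substituting into Theorem~\ref{T1.2}, the gain $\norm{u^\infty}_{\mathcal{F}(a)}^{1/2}$ is super-polynomially small in $L$ and negligible, while the power term becomes
\[
\big(\log^-\norm{u^\infty}_{\mathcal{F}(a)}\big)^{-\frac{\sigma}{\sigma+3}}\le C\,\Big(\tfrac{L}{\log L}\Big)^{-\frac{\sigma}{\sigma+3}}=C\,L^{-\frac{\sigma}{\sigma+3}}(\log L)^{\frac{\sigma}{\sigma+3}}.
\]
The factor $(\log L)^{\sigma/(\sigma+3)}$ is exactly what must be absorbed: for any $\epsilon>0$ one has $(\log L)^{\sigma/(\sigma+3)}\le C L^{\epsilon}$ once $L$ is large enough, which is precisely the role of the threshold $\delta$ (one takes $\delta$ so small that $\epsilon_0<\delta$ pushes $L$ into this asymptotic regime and makes the chosen $N$ a legitimate large integer). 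This produces the exponent $-\tfrac{\sigma}{\sigma+3}+\epsilon$ for $\mathrm{curl}\,A$; the identical argument applied to the $q$-estimate of Theorem~\ref{T1.2} yields $-\tfrac{\gamma\sigma}{(\sigma+3)(2\gamma+3)}+\epsilon$. The delicate point throughout is the quantitative optimisation, since the super-exponential/exponential mismatch is what both degrades the exponent by $\epsilon$ and renders the smallness hypothesis unavoidable.
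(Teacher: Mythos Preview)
Your argument is correct, and the core mechanism---a low/high frequency split of the spherical-harmonic expansion combined with the a priori decay of the Fourier coefficients from Lemma~\ref{L3.5}---is the same one underlying the paper's proof. The routing, however, differs. The paper does not pass through Theorem~\ref{T1.2}; instead it quotes Lemma~\ref{tioe} (imported from H\"ahner--Hohage) to bound the \emph{near field} operator norm directly by $\exp\big(-(\log^-\epsilon_0)^\theta\big)$ for any $\theta<1$, and then feeds this into Theorem~\ref{T1.1}, choosing $\theta$ so that $\theta\frac{\sigma}{\sigma+3}=\frac{\sigma}{\sigma+3}-\epsilon$. Your version stays entirely on the far-field side: you interpolate between $\|\cdot\|_{L^2}$ and the stronger $\mathcal{F}(a_0)$-norm (exploiting two radii $a_0<a$ to manufacture the geometric tail factor $\theta^{2N}$), obtain $\log^-\|u^\infty\|_{\mathcal F}\gtrsim L/\log L$, and then invoke Theorem~\ref{T1.2}. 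Your optimisation is sharper ($L/\log L$ beats $L^\theta$ for any fixed $\theta<1$), and the argument is fully self-contained rather than deferring to \cite{texbook3}; the paper's packaging is shorter because the interpolation step is outsourced, and choosing $\theta$ close to $1$ absorbs the $\epsilon$-loss in one clean stroke. Either way, the two-radii trick you use is morally the same device as the paper's passage to the ball of radius $2a$ in Lemma~\ref{tioe}. One small point to tidy: $w_\ell(a)$ need not be monotone for small $\ell$ (it depends on $ka$), so the bound $\max_{\ell\le N}w_\ell(a)\le w_N(a)$ should be replaced by $\max_{\ell\le N}w_\ell(a)\le C\,w_N(a)$ with a harmless constant depending on $ka$; this does not affect the rest of the argument.
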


From Theorems \ref{T1.1} and \ref{T1.2} (or \ref{T1.3}) we immediately derive the uniqueness corollary.
\begin{corollary}\label{Cor1}
Let $A_1$ and $A_2\in \mathcal{A}_\sigma(M)$ two vector fields, $q_1$ and $q_2\in \mathcal{Q}_\gamma(M)$ and $B\supset D$. Then, we have
$$
u^\infty_{A_1,q_1}(\hat{x},d)=u^\infty_{A_2,q_2}(\hat{x},d),\quad \forall (\hat{x},d)\in \s^2\times \s^2,
$$
or
$$
u^s_{A_1,q_1}(x,y)=u^s_{A_2,q_2}(x,y),\quad \forall (x,y)\in \p B\times \p B,
$$
implies $q_1=q_2$ and $\mathrm{curl}\, A_1=\mathrm{curl}\, A_2$ in $D$.
\end{corollary}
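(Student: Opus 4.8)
The plan is to deduce the uniqueness statement directly from the conditional stability estimates of Theorems \ref{T1.1} and \ref{T1.2} (or \ref{T1.3}), using that coincidence of the data forces the corresponding measurement norm to vanish, at which point the right-hand sides of those estimates collapse to zero.

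First I would handle the near field case. Assuming $u^s_{A_1,q_1}(x,y)=u^s_{A_2,q_2}(x,y)$ for all $(x,y)\in\p B\times\p B$, I note that by \eqref{NN} each $\mathcal{N}_{A,q}$ is the integral operator on $L^2(\p B)$ whose kernel is precisely $u^s_{A,q}(\cdot,\cdot)$; hence equality of the scattered fields on $\p B\times\p B$ means the two operators share the same kernel, so $\mathcal{N}_{A_1,q_1}=\mathcal{N}_{A_2,q_2}$ and $\norm{\mathcal{N}_{A_1,q_1}-\mathcal{N}_{A_2,q_2}}=0$. Substituting this into Theorem \ref{T1.1}, the square-root terms vanish, and with the convention $\log^-(0)=+\infty$ (so that $(\log^-(0))^{-\alpha}=0$ for every $\alpha>0$) the logarithmic terms vanish too; thus $\norm{\mathrm{curl}(A_1-A_2)}_{L^\infty}=0$ and $\norm{q_2-q_1}_{L^\infty}=0$.

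Next I would treat the far field case in the same spirit. If $u^\infty_{A_1,q_1}(\hat x,d)=u^\infty_{A_2,q_2}(\hat x,d)$ for all $(\hat x,d)\in\s^2\times\s^2$, then the difference $u^\infty_{A_1,q_1}-u^\infty_{A_2,q_2}$ is identically zero, so all its Fourier coefficients \eqref{1.18} vanish and therefore the weighted norm \eqref{1.19} gives $\norm{u^\infty_{A_1,q_1}-u^\infty_{A_2,q_2}}_{\mathcal{F}}=0$; likewise its $L^2(\s^2\times\s^2)$ norm is zero. Applying Theorem \ref{T1.2} then yields vanishing bounds exactly as above, while Theorem \ref{T1.3} may be invoked equally well since its smallness requirement $\norm{u^\infty_{A_1,q_1}-u^\infty_{A_2,q_2}}_{L^2(\s^2\times\s^2)}<\delta$ is automatically satisfied. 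From $\norm{\mathrm{curl}(A_1-A_2)}_{L^\infty}=0$ and $\norm{q_2-q_1}_{L^\infty}=0$ I would finally conclude $\mathrm{curl}\,A_1=\mathrm{curl}\,A_2$ and $q_1=q_2$ in $D$.

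I do not expect any genuine obstacle here, since all the heavy lifting resides in the stability theorems; the only point that needs a word of care is the limiting behaviour of the logarithmic factor at a vanishing argument. Because every exponent appearing in the estimates, namely $\frac{\sigma}{\sigma+3}$ and $\frac{\gamma\sigma}{(\sigma+3)(2\gamma+3)}$, is strictly positive, the quantity $(\log^-(t))^{-\alpha}$ tends to $0$ as $t\to0^+$, and the corollary follows at once.
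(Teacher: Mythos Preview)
Your argument is correct and matches the paper's own treatment: the corollary is stated there as an immediate consequence of Theorems \ref{T1.1} and \ref{T1.2} (or \ref{T1.3}), with no further proof given. Your only added care---noting that the positive exponents make $(\log^-(t))^{-\alpha}\to 0$ as $t\to 0^+$---is exactly the small point needed to make ``immediate'' rigorous.
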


The remainder of this paper is organized as follows. In Section 2, we  give a brief outline of some basic properties of solutions to the Helmholtz equation, the magnetic Lippmann-Schwinger equation and some properties of the near field operator. In Section 3, we review the construction of the complex geometric optics solutions due to Tzou \cite{texbook6} and we estimate the magnetic fields and the electric potentials respectively from  the near field operator. In section 4 we establish a relation between the far field and the near field and then we prove Theorems \ref{T1.2} and \ref{T1.3}. An appendix is dedicated to some technical results  in relation with   geometric optics solutions and the connection between far field and near field data.
\section{Helmholtz equation and magnetic Lippmann-Schwinger equation}
\label{sec2}
We here outline some results on the well-posedness of the direct scattering problem using the formulation of the problem as a Lippmann-Schwinger equation (see for instance \cite{texbook5, texbook10}) and prove a uniform bound with respect to the potentials.\
\smallskip

Throughout this section, we assume that $A\in W^{1,\infty}(\R^3,\R^3)$ and $q\in L^\infty(\R^3,\C)$ with $\mathrm{Supp}(A)$, $\mathrm{Supp}(q)\subset D$ and $\Im(q)\geq 0$. 
\smallskip

The function $v$ will be referring in this section to the incident wave (i.e. $\Phi(\cdot, y)$ or $u^i(\cdot, d)$), the associated total field is denoted by $u=u_{A,q}$ and the scattered field $u^s =u^s_{A,q}= u -v \in H^2_{\mathrm{loc}}(\R^3)$. Both scattering problems can then be stated as solving for $u^s \in H^2_{\mathrm{loc}}(\R^3)$ satisfying 
\begin{equation}\label{sca}
-\Delta u^s - k^2 u^s = Q_{A,q} (u^s +v) \quad \mathrm{ in } \; \R^3,
\end{equation}
and the Sommerfeld radiation condition \eqref{1.5}. For the study of this problem, we only require that $v\in H^1(D)$.
Convolution properties imply in particular that $u^s$ can be represented as 
\begin{equation}\label{1.6}
u^s(x)= \int_{D}\Phi(x,y)Q_{A,q}u(y)\, dy,\quad x\in\Rt,\, \mbox{ with } u = u^s + v \mbox{ in } D.
\end{equation}
Let us introduce the integral operator $T_{A,q} : H^1(D) \to H^1(D)$ defined by
\begin{equation}\label{1.9}
T_{A,q}w(x):=\int_{D}\Phi(x,y)Q_{A,q}w(y)dy, \quad x\in D.
\end{equation}
We remark that since $Q_{A,q} : H^1(D) \to L^2(D)$ is continuous (by regularity assumptions on $A$ and $q$), and since the volume potential 
$$
w \mapsto \int_{D}\Phi(\cdot ,y)w(y)dy,
$$
continuously maps $L^2(D)$ into $H^2_{\mathrm{loc}}(\R^3)$ (see \cite{texbook2}), we deduce  that   $T_{A,q}$ is compact. Equation \eqref{1.6} implies in particular that the total field  $u  \in H^1(D)$ and is a solution of the Lippmann-Schwinger equation 
\begin{equation}\label{lipmann}
u - T_{A,q} u = v \quad \mbox{ in } H^1(D). 
\end{equation}
Conversely, if $u \in H^1(D)$ satisfies \eqref{lipmann}, then one easily verifies using the properties of volume potentials \cite{texbook2} that $u_{A,q}^s$ given by \eqref{1.6} is in $ H^2_{\mathrm{loc}}(\R^3)$ and is a solution of the scattering problem \eqref{sca}-\eqref{1.5}.
The well posedness of the latter is then a consequence of the following proposition.
\begin{proposition} \label{invertTAq}
The operator $I-T_{A,q} : H^1(D) \to H^1(D)$, with $I$ denoting the identity operator on  $H^1(D)$ is  continuously convertible.
\end{proposition}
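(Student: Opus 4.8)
The plan is to show that $I - T_{A,q}$ is a Fredholm operator of index zero and then verify injectivity, so that invertibility follows from the Fredholm alternative. Since we already established that $T_{A,q} : H^1(D) \to H^1(D)$ is compact, the operator $I - T_{A,q}$ is automatically Fredholm of index zero on the Hilbert space $H^1(D)$. Hence it suffices to prove that $I - T_{A,q}$ is injective: if $u \in H^1(D)$ satisfies $u - T_{A,q}u = 0$, we must show $u = 0$.

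First I would translate the injectivity question back into a scattering statement. Suppose $u \in H^1(D)$ solves the homogeneous equation $u = T_{A,q}u$. Extend $u$ to all of $\R^3$ by setting $u^s(x) := \int_D \Phi(x,y) Q_{A,q}u(y)\, dy$ for $x \in \R^3$; by the mapping properties of the volume potential recalled above, $u^s \in H^2_{\mathrm{loc}}(\R^3)$, it agrees with $u$ on $D$ (because $u = T_{A,q}u$ there), and it satisfies the homogeneous scattering problem
\begin{equation*}
-\Delta u^s - k^2 u^s = Q_{A,q}u^s \quad \text{in } \R^3,
\end{equation*}
together with the Sommerfeld radiation condition \eqref{1.5}. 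Equivalently, $u^s$ is a radiating solution of $\HAV u^s - k^2 u^s = 0$ in $\R^3$. The goal is then to prove that the only such radiating solution is the trivial one.

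The heart of the argument is a Rellich-type uniqueness theorem together with a unique continuation principle, which is where the sign condition $\Im(q) \ge 0$ and the reality of $A$ enter. The natural route is to multiply the equation by $\overline{u^s}$, integrate over a large ball $B_R$ of radius $R$ containing $D$, and integrate by parts. Using the identity $\HAV = -(\nabla + iA)^2 + q$ and the fact that $A$ is real valued, the leading magnetic term $\int_{B_R}|(\nabla + iA)u^s|^2$ is real and nonnegative, so taking imaginary parts isolates the boundary flux $\Im \int_{\partial B_R} \overline{u^s}\,\partial_\nu u^s\, \ds$ against the sign-definite bulk term $\int_{B_R} \Im(q)\,|u^s|^2 \ge 0$. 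Letting $R \to \infty$ and invoking the radiation condition \eqref{1.5} forces the far field pattern of $u^s$ to vanish, and then Rellich's lemma gives $u^s = 0$ outside $B$. Finally, unique continuation for the magnetic Schr\"odinger operator (valid under the $W^{1,\infty}$ regularity of $A$ and $L^\infty$ bound on $q$) propagates the vanishing from the exterior into $D$, so $u^s \equiv 0$ everywhere, whence $u = 0$ on $D$.

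The main obstacle I expect is the unique continuation step: Rellich's lemma only delivers $u^s = 0$ in the unbounded component of $\R^3 \setminus D$, and carrying this into the support of the potentials requires a unique continuation principle for a first-order (magnetic) perturbation of the Laplacian with merely Lipschitz magnetic potential and bounded electric potential. One must be careful that the available regularity is enough to invoke such a theorem; this is precisely the point where the hypotheses $A \in W^{1,\infty}$ and $q \in L^\infty$ are used, and it is the step I would treat with the most care, likely citing a standard strong unique continuation result (e.g. of Carleman type) rather than reproving it. The boundary-flux computation and the appeal to Rellich's lemma are by contrast routine once the radiation condition is in hand.
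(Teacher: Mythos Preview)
Your proposal is correct and follows essentially the same approach as the paper: reduce to injectivity via the Fredholm alternative, pass to the associated radiating solution $u^s$, multiply by $\overline{u^s}$ and integrate by parts to obtain the sign condition $\Im\int_{\partial B}\overline{u^s}\,\partial_\nu u^s\,\ds\ge 0$, apply the Rellich lemma to get $u^s=0$ outside $B$, and then conclude by unique continuation. The only cosmetic differences are that the paper works on a fixed ball $B\supset D$ rather than letting $R\to\infty$, expands $Q_{A,q}$ explicitly rather than writing the magnetic kinetic term as $|(\nabla+iA)u^s|^2$, and invokes unique continuation via the pointwise differential inequality $|\Delta u^s|\le C(|u^s|+|\nabla u^s|)$ rather than citing a result phrased for $\HAV$ directly.
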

\begin{proof}
 The operator $I-T_{A,q}$ is of Fredholm type with index 0. It is therefore sufficient to prove the injectivity of this operator. If $u - T_{A,q} u = 0$, then from the above equivalence, $u^s$ given by \eqref{1.6} with $v=0$, satisfies 
$$
-\Delta u^s - k^2 u^s = Q_{A,q} u^s \quad \mathrm{ in } \; \R^3.
$$
Multiplying by $\overline{u^s}$ and integrating over a ball $B$ containing $D$ implies after applying the Green's theorem in both sides
\begin{multline*}
\int_{B} ( |\nabla u^s|^2 - k^2 |u^s|^2) dx - \int_{\partial B} \partial_r  u^s \, \overline{u^s} \ds(x)\cr
 = \int_{B}\big( i A \cdot ( \nabla u^s \overline{u^s} - \nabla  \overline{u^s} u^s) - (|A|^2 + q) |u^s|^2 \big)dx.
\end{multline*}
Taking the imaginary part of the previous equality implies that $\Im ( \int_{\partial B_R} \partial_r u^s \, \overline{u^s} \ds(x) ) \ge 0$ (since $A$ is real valued and $\Im(q) \ge 0$). The Rellich lemma then implies that $u^s=0$ in $\R^3\setminus B$. We now observe that 
$$
|\Delta u^s(x)| \le (k^2 + \|q\|_\infty + \|A\|^2_\infty +  \|\nabla\cdot A\|_\infty) |u^s(x)| + 2 \|A\|_\infty |\nabla u^s(x)|,\, \mbox{ for a.e. } x \in \R^3.
$$ 
Unique continuation theorem yields  $u^s=0$ in $\R^3$ and therefore $u=0$ in $D$. This proves the injectivity of $I-T_{A,q}$ and finishes the proof of the proposition.
\end{proof}
It is also possible to prove the following uniform bound.

\begin{proposition} \label{uniformboundT}
 Let $A\in W^{1,\infty}(\R^3,\R^3)$ and $q\in L^\infty(\R^3,\C)$  as above and such that $\|A\|_{W^{1,\infty}} \le M$ and $\|q\|_{L^{\infty}} \le M$, for some constant $M >0$. Then there exists a constant $C$ that only depends on $D$, $M$ and $k$ such that $\| (I-T_{A,q})^{-1}\| \le C$. Here $\|\,\cdot\,\|$ denotes the norm in $\mathcal{L}(H^1(D))$.
\end{proposition}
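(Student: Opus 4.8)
The plan is to argue by contradiction and exploit compactness, thereby reducing the uniform bound to the pointwise invertibility already established in Proposition \ref{invertTAq}. Suppose the claim fails. Then there are sequences $A_n$ with $\|A_n\|_{W^{1,\infty}}\le M$, $q_n$ with $\|q_n\|_{L^\infty}\le M$ (all obeying the standing support, reality and sign hypotheses of this section), together with $u_n\in H^1(D)$ with $\|u_n\|_{H^1(D)}=1$, such that $f_n:=(I-T_{A_n,q_n})u_n\to 0$ in $H^1(D)$. The goal is to extract limits $(A,q)$ and $u$ with $(I-T_{A,q})u=0$ and $u\neq 0$, contradicting Proposition \ref{invertTAq}.

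First I would extract convergent subsequences. Since $A_n$ is bounded in $W^{1,\infty}(D)$, the uniform Lipschitz bound lets me invoke Arzel\`a--Ascoli to assume $A_n\to A$ uniformly on $\overline D$, while $\nabla A_n\rightharpoonup \nabla A$ weakly-$\ast$ in $L^\infty$; likewise $q_n\rightharpoonup q$ weakly-$\ast$ in $L^\infty$. The limits satisfy $\|A\|_{W^{1,\infty}}\le M$, $\|q\|_{L^\infty}\le M$, with $A$ real-valued, $\Im q\ge 0$ and support in $\overline D$, all of which pass to the limit under these modes of convergence; in particular the limit pair $(A,q)$ is admissible in the sense of this section. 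Since $\|u_n\|_{H^1(D)}=1$, by Rellich's theorem I may also assume $u_n\rightharpoonup u$ weakly in $H^1(D)$ and $u_n\to u$ strongly in $L^2(D)$.

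The heart of the matter is to show that $T_{A_n,q_n}u_n\to T_{A,q}u$ strongly in $H^1(D)$. I would first verify that $Q_{A_n,q_n}u_n\rightharpoonup Q_{A,q}u$ weakly in $L^2(D)$. Writing $Q_{A_n,q_n}u_n=i(\dive A_n)u_n+2iA_n\cdot\nabla u_n-(|A_n|^2+q_n)u_n$, one treats each term by pairing a strongly converging factor with a weakly converging one: $A_n\cdot\nabla u_n\rightharpoonup A\cdot\nabla u$ (uniform times weak-$L^2$) and $|A_n|^2u_n\to |A|^2u$ strongly, while the products $(\dive A_n)u_n$ and $q_nu_n$ converge weakly in $L^2$ because, for every test $\phi\in L^2(D)$, the product $u_n\phi\to u\phi$ strongly in $L^1(D)$ is paired against the weak-$\ast$ $L^\infty$ limits $\dive A_n\rightharpoonup \dive A$ and $q_n\rightharpoonup q$. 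Recalling that the volume potential $Vw:=\int_D\Phi(\cdot,y)w(y)\,dy$ appearing in \eqref{1.9} maps $L^2(D)$ boundedly into $H^2(D)$, hence compactly into $H^1(D)$ by Rellich, this weak convergence is upgraded: $T_{A_n,q_n}u_n=V(Q_{A_n,q_n}u_n)$ converges strongly in $H^1(D)$ to $V(Q_{A,q}u)=T_{A,q}u$.

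Finally, passing to the limit in $u_n=f_n+T_{A_n,q_n}u_n$ and using $f_n\to 0$ yields $u_n\to T_{A,q}u$ strongly in $H^1(D)$; by uniqueness of limits $u=T_{A,q}u$ and $\|u\|_{H^1(D)}=\lim\|u_n\|_{H^1(D)}=1$, so $u\neq 0$. Since $(A,q)$ still verifies the hypotheses of this section, Proposition \ref{invertTAq} forces $u=0$, the desired contradiction, and the uniform bound follows. I expect the only delicate point to be the weak $L^2$ convergence of $Q_{A_n,q_n}u_n$, namely correctly pairing the weak-$\ast$ $L^\infty$ limits of $\dive A_n$ and $q_n$ with the sole strongly convergent factor $u_n$; the term $A_n\cdot\nabla u_n$ is harmless precisely because $A_n$ converges uniformly, compensating the mere weak convergence of $\nabla u_n$.
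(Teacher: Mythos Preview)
Your proof is correct and follows essentially the same contradiction-plus-compactness strategy as the paper: assume the bound fails, normalize to get $\|u_n\|_{H^1(D)}=1$ with $(I-T_{A_n,q_n})u_n\to 0$, extract limits, show $Q_{A_n,q_n}u_n\rightharpoonup Q_{A,q}u$ weakly in $L^2(D)$, upgrade via the compactness of the volume potential, and contradict Proposition~\ref{invertTAq}. The only cosmetic difference is the order in which strong $H^1$ convergence of $u_n$ is obtained: the paper first observes that $T_{A_n,q_n}u_n$ is bounded in $H^2(D)$ and hence precompact in $H^1(D)$, deduces strong $H^1$ convergence of $u_n$ directly, and then argues the weak convergence of $Q_{A_n,q_n}u_n$ using only weak-$\ast$ convergence of the coefficients; you instead invoke Arzel\`a--Ascoli to get uniform convergence of $A_n$, which lets you handle $A_n\cdot\nabla u_n$ with only weak $H^1$ convergence of $u_n$, and recover strong convergence of $u_n$ at the end.
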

\begin{proof}
We prove the result using a contradiction argument. Let us assume that, for each $n \in \mathbb{N}$, there exists $A_n \in W^{1,\infty}(\R^3)^3$ and  $q_n \in L^\infty(\R^3)$ as in the proposition such that
$$
\| (I-T_{A_n,q_n})^{-1}\| \ge  n.
$$
This implies in particular the existence of a non trivial function $v_n \in H^1(D)$ such that the function $u_n \in H^1(D)$ satisfying $u_n - T_{A_n,q_n} u_n = v_n \mbox{ in } H^1(D)$ verifies
$$
\Vert u_{n}\Vert_{H^1(D)} \geq n\Vert v_n\Vert_{H^1(D)} .
$$
This gives for the normalized sequence  $\tilde{u}_{n}=\frac{u_{n}}{\Vert u_{n}\Vert_{H^1(D)}}$ 
\begin{equation} \label{toto1}
\tilde{u}_{n} - T_{A_n,q_n}\tilde{u}_{n}=\frac{v_{n}}{\Vert u_{n}\Vert_{H^1(D)}}=:\tilde{v}_{n}\quad \textrm{in } H^1(D),
\end{equation}
where $\Vert \tilde{v}_{n}\Vert_{H^1(D)} \leq \frac{1}{n}$. The associated scattered field $\tilde{u}^s_{n}\in H^2_{\mathrm{loc}}(\R^3)$ is defined by  
\begin{equation}\label{1.6b}
\tilde{u}^s_{n}(x)= \int_{D}\Phi(x,y)Q_{A_n,q_n}\tilde{u}_{n}(y)\, dy,\quad x\in\Rt,\, \mbox{ with } \tilde{u}_{n} = \tilde{u}^s_{n} + \tilde{v}_{n} \mbox{ in } D.
\end{equation}
 Since the sequence $(\tilde{u}_{n})$ is  bounded in $H^1(D)$, the assumptions on $A_n$ and $q_n$ imply that the sequence $(Q_{A_n,q_n} (\tilde{u}_{n}))$ is also  bounded in $L^2(D)$. It yields in particular, using \eqref{1.6b}, that the sequence $(\tilde{u}^s_{n})$ is  bounded in $H^2(D)$.  Using the Rellich-Kondrachov compactness theorem, we infer that, an extracted subsequence, that we keep denoting  $(\tilde{u}^s_{n})$ is a Cauchy sequence in $H^1(D)$. From $\tilde{u}_n = \tilde{u}^s_{n} + \tilde v_n$ we deduce that $(\tilde{u}_{n})$ is also a Cauchy sequence  in $H^1(D)$ and therefore converges to some $u$ in $H^1(D)$. Given the boundedness of the sequences $(A_n)$ and $(q_n)$, by changing the original sequence   (and without corrupting the contradiction argument), one can assume that $(A_n)$ and $(q_n)$ weak-* converge to $A$ and $q$ respectively in $W^{1,\infty}(\R^3)^3$ and $L^\infty(\R^3)$. One then easily verifies that $Q_{A_n,q_n} (\tilde{u}_{n})$ weakly converges in $L^2(D)$ to $Q_{A,q} ({u})$. Consequently, from \eqref{1.9}, we get that 
$ T_{A_n,q_n}\tilde{u}_{n}$ strongly convergences to $ T_{A,q}\tilde{u}$ in $H^1(D)$. Passing to the limit in 
\eqref{toto1} implies that $u \in H^1(D)$ verifies $u - T_{A,q} u = 0$. The limits $A$ and $q$ obviously verify the hypothesis of Proposition \ref{invertTAq} and therefore $u=0$. This contradicts $\Vert u\Vert_{H^1(D)}=\Vert \tilde u_{n}\Vert_{H^1(D)}=1$.
\end{proof} 

Let us observe for later use that, thanks to \eqref{1.6},  the far field associated with the scattered wave verifying \eqref{sca} can be expressed as 
\begin{equation}\label{1.11}
 u_{A,q}^\infty (\hat{x}):=\frac{1}{4\pi}\int_{\R^3} e^{-ik\hat{x}\cdot y}Q_{A,q}u(y) dy,\quad \hat{x}\in\s^2,
\end{equation}
where $u$ is the solution of \eqref{lipmann}.
\smallskip

As a straightforward corollary of Proposition \ref{uniformboundT}, the continuity properties of volume potentials and \eqref{1.11}, we have the following  uniform estimates for $u^s$ solution of \eqref{sca}-\eqref{1.5} and associated far field.
\begin{corollary}\label{C2.3}
Let $A\in W^{1,\infty}(\R^3,\R^3)$ and $q\in L^\infty(\R^3)$ as above such that $\|A\|_{W^{1,\infty}} \le M$ and $\|q\|_{L^{\infty}} \le M$ for some constant $M >0$. Then there exists a constant $C$  that depends only on $M$, $D$ and $k$ such that 
$$
\|u_{A,q}^s\|_{H^2(D)} \le C \|v\|_{H^1(D)}  \quad \mbox{ and } \quad \|u_{A,q}^\infty\|_{L^2(\s^2)} \le C \|v\|_{H^1(D)}  
$$
 for all $v \in H^1(D)$, where  $ u_{A,q}^s \in H^2_{\mathrm{loc}}(\R^3)$ and is solution of the scattering problem \eqref{sca}-\eqref{1.5}. Moreover, for any compact $K$ there exists a constant $C$  that depends only on $M$, $D$, $K$ and $k$ such that for all $v \in H^1(D)$
$$
\|u_{A,q}^s\|_{H^2(K)} \le C \|v\|_{H^1(D)}.
$$
\end{corollary}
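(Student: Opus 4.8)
The plan is to read off all three bounds from the chain of uniformly bounded operators furnished by the Lippmann--Schwinger formulation. First I would invoke the equivalence established above: the total field $u=u_{A,q}$ satisfies \eqref{lipmann}, that is $u-T_{A,q}u=v$ in $H^1(D)$. Applying the uniform bound of Proposition \ref{uniformboundT}, the solution $u=(I-T_{A,q})^{-1}v$ obeys $\norm{u}_{H^1(D)}\le C\norm{v}_{H^1(D)}$ with $C$ depending only on $D$, $M$ and $k$. This is the key input; everything that follows is the continuity of explicit integral operators.

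Next I would control the source term $Q_{A,q}u$. From the defining formula \eqref{1.2}, the operator $Q_{A,q}$ maps $H^1(D)$ continuously into $L^2(D)$ (as already observed after \eqref{1.9}), with operator norm bounded by a constant depending only on $\norm{A}_{W^{1,\infty}}$ and $\norm{q}_{L^\infty}$, hence only on $M$. Combined with the previous step this yields $\norm{Q_{A,q}u}_{L^2(D)}\le C\norm{v}_{H^1(D)}$, again with $C$ depending only on $D$, $M$ and $k$.

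For the $H^2$ estimates I would use the representation \eqref{1.6}, namely $u^s=\int_D\Phi(\cdot,y)Q_{A,q}u(y)\,dy$, together with the fact recalled from \cite{texbook2} that the volume potential maps $L^2(D)$ continuously into $H^2_{\mathrm{loc}}(\R^3)$. Restricting to $D$ (respectively to an arbitrary compact $K$) gives $\norm{u^s}_{H^2(D)}\le C\norm{Q_{A,q}u}_{L^2(D)}$ and $\norm{u^s}_{H^2(K)}\le C(K)\norm{Q_{A,q}u}_{L^2(D)}$, the constant in the latter additionally depending on $K$. For the far field I would instead start from \eqref{1.11}, $u^\infty_{A,q}(\hat{x})=\frac{1}{4\pi}\int_{\R^3}e^{-ik\hat{x}\cdot y}Q_{A,q}u(y)\,dy$; since the kernel $e^{-ik\hat{x}\cdot y}$ has modulus one on the bounded set $D$, the Cauchy--Schwarz inequality gives $\abs{u^\infty_{A,q}(\hat{x})}\le (4\pi)^{-1}|D|^{1/2}\norm{Q_{A,q}u}_{L^2(D)}$ uniformly in $\hat{x}$, whence $\norm{u^\infty_{A,q}}_{L^2(\s^2)}\le C\norm{Q_{A,q}u}_{L^2(D)}$. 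Chaining these with the bound on $\norm{Q_{A,q}u}_{L^2(D)}$ from the second step produces all three claimed inequalities.

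There is no genuine obstacle: the result is a composition of three maps---$(I-T_{A,q})^{-1}$, $Q_{A,q}$, and the volume/far-field potential---each bounded uniformly over the admissible class. The only point requiring a line of care is the bookkeeping of constants. The uniformity in $(A,q)$ rests entirely on Proposition \ref{uniformboundT} and on the fact that the norm of $Q_{A,q}$ is controlled by $M$ alone, so that no constant depends on the particular potentials but only on $D$, $M$, $k$ (and, for the localized bound, on $K$); in particular the far-field kernel contributes a $k$-independent factor, consistent with the stated dependence.
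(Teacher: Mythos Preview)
Your argument is correct and is exactly the ``straightforward corollary'' the paper has in mind: it explicitly names Proposition~\ref{uniformboundT}, the continuity of volume potentials, and \eqref{1.11} as the ingredients, and you have simply written out how these three pieces compose. There is nothing to add.
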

Armed with with above, let us define for later use the linear and continuous solution operator $\mathcal{M}_{A,q}$ by
\begin{equation}\label{Mop}
\begin{array}{ccccc}
\mathcal{M}_{A,q} & : & H^1(D) & \rightarrow & H^2_{\mathrm{loc}}(\R^3), \\
 & & v &  \mapsto & \mathcal{M}_{A,q} v := u_{A,q}^s
\end{array}
\end{equation}
where  $u_{A,q}^s$ is the solution of \eqref{sca}-\eqref{1.5}.
\smallskip

We  use this data to define the near field operator 
$\mathcal{N}_{A,q}:\; L^2(\p B) \rightarrow L^2(\p B),$ as 
 \begin{align}\label{3.1}
\mathcal{N}_{A,q} h(x) :=\int_{\partial B} u^s_{A,q}(x,y) h(y) \, \ds(y), \quad x\in \p B,
\end{align}
where $u^s_{A,q}(\cdot, y) := \mathcal{M}_{A,q}  \Phi(\cdot, y)$, $y\in\p B$. 
We first remark that 
$$
\| \mathcal{N}_{A,q}\| \le \norm{u^s_{A,q}}_{L^2(\partial B \times \partial B) },
$$
and therefore it is sufficient to study  the stability of $   \mathcal{N}_{A,q} \mapsto (A,q) $ in order to infer stability results  in terms of near field measurements. 

We second observe that, after introducing the single-layer operator $ \mathcal{S} :\; L^2(\p B) \rightarrow H^1(D)$ defined by
\begin{align}\label{Sop}
\mathcal{S} h(x) :=\int_{\partial B} \Phi(x,y) h(y) \, \ds(y), \quad x \in D,
\end{align}
one has by linearity and continuity properties of the mapping  $\mathcal{M}_{A,q}$ the following identity
\begin{equation}\label{n}
\mathcal{N}_{A,q} h = (\mathcal{M}_{A,q} \mathcal{S} h )|_{\partial B}.
\end{equation}
This equality states that $\mathcal{N}_{A,q} h$ is nothing but the near field measurements on $\partial B$ generated by an incident field $v: = \mathcal{S} h$.
\smallskip

From properties and jump relations for single-layer potential (see \cite{texbook2}) $v= \mathcal{S} h$ with density $h\in L^2(\p B)$ is defined in $\R^3$, satisfies the Helmholtz equation in $  \R^3 \setminus \p{B}$, the Sommerfeld radiation condition \eqref{1.5} and the following continuity and jump properties across $\p B$. 
\begin{equation}\label{JJ}
\begin{array}{lll}
v^{-}(x)=v^{+}(x) =v(x) & \textrm{on}\,\,\p B,\cr
\p_\nu v^{-}(x)  -\p_\nu v^{+}(x)=h(x) &  \textrm{on}\,\, \p B,
\end{array}
\end{equation}
where $v^{+}$ and $v^{-}$ respectively denote the restriction of $v$ to $\Rt \setminus  \overline{B}$ and $ B$. In order to exploit the information encoded into the identity (\ref{n}), one can easily check the following lemma by using (\ref{JJ}).
\begin{lemma}\label{L2.4}
Let $A\in W^{1,\infty}(\R^3,\R^3)$, $q\in L^\infty(\R^3)$ as above, and $h\in L^2(\p B)$. Set $v=\mathcal{S}h$ and $u^s=\mathcal{M}_{A,q} v$. 
Then the total field $u=v+u^s$ is  solution to the transmission problem
\begin{equation}\label{2.19}
\begin{array}{llll}
\HAV u(x)=k^2 u(x)&\textrm{in } \,\R^3\backslash\p B,\cr
u^{+}(x)=u^{-}(x) &\textrm{on } \,\p B,\\
\p_\nu u^{-}(x)-\p_\nu u^{+}(x)=h(x),  &\textrm{on }\,\p B\cr
\end{array}
\end{equation}
together with the Sommerfeld radiation condition \eqref{1.5}.
\end{lemma}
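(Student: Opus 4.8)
The plan is to assemble the stated transmission problem from two ingredients already at our disposal: the single-layer jump relations \eqref{JJ} obeyed by $v=\mathcal{S}h$, and the defining scattering equation \eqref{sca} together with the $H^2_{\mathrm{loc}}(\R^3)$-regularity of $u^s=\mathcal{M}_{A,q}v$. Throughout I would use the identity $\HAV=-\Delta-Q_{A,q}$ from \eqref{1.1}, so that the first line of \eqref{2.19} is equivalent to $-\Delta u-k^2u=Q_{A,q}u$ in $\R^3\setminus\p B$.

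First I would establish the volume equation. By the properties of the single-layer potential recorded in \eqref{JJ}, $v$ satisfies the homogeneous Helmholtz equation $-\Delta v-k^2v=0$ in $\R^3\setminus\p B$, while $u^s$ satisfies \eqref{sca}, namely $-\Delta u^s-k^2u^s=Q_{A,q}(u^s+v)=Q_{A,q}u$ in all of $\R^3$. Adding these two identities on $\R^3\setminus\p B$ and using $u=v+u^s$ gives $-\Delta u-k^2u=Q_{A,q}u$ there, i.e.\ $\HAV u=k^2u$ in $\R^3\setminus\p B$, which is the first line of \eqref{2.19}. Since $\mathrm{Supp}(A),\mathrm{Supp}(q)\subset D$ lie strictly inside $B$, the term $Q_{A,q}u$ is well defined and causes no difficulty near the interface.

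Next I would read off the two interface conditions. Because $u^s\in H^2_{\mathrm{loc}}(\R^3)$, both its trace and its normal-derivative trace are continuous across $\p B$, so $u^{s,+}=u^{s,-}$ and $\p_\nu u^{s,+}=\p_\nu u^{s,-}$ on $\p B$; that is, $u^s$ contributes no jump. Combining this with the single-layer relations $v^-=v^+$ and $\p_\nu v^--\p_\nu v^+=h$ from \eqref{JJ} and with the decomposition $u=v+u^s$ yields $u^+=u^-$ and $\p_\nu u^--\p_\nu u^+=(\p_\nu v^--\p_\nu v^+)+(\p_\nu u^{s,-}-\p_\nu u^{s,+})=h$ on $\p B$, which are the remaining two lines of \eqref{2.19}. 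Finally, $v=\mathcal{S}h$ satisfies the Sommerfeld radiation condition \eqref{1.5} by the properties of the single-layer potential and $u^s$ satisfies it by construction, so their sum $u$ does as well.

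There is no genuine obstacle here: the statement is essentially a bookkeeping consequence of the jump relations \eqref{JJ} and the regularity $u^s\in H^2_{\mathrm{loc}}(\R^3)$. The only point requiring minor care is the correct matching of the source term, namely recognizing that the right-hand side $Q_{A,q}(u^s+v)$ of \eqref{sca} is exactly $Q_{A,q}u$, so that it combines with the homogeneous Helmholtz equation for $v$ to reproduce the magnetic Schr\"odinger operator $\HAV$ acting on the full field $u$.
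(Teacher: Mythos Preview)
Your proof is correct and follows exactly the approach the paper indicates: the paper does not give a detailed proof but merely states that ``one can easily check the following lemma by using (\ref{JJ}),'' and you have carried out precisely this verification by combining the single-layer jump relations with the $H^2_{\mathrm{loc}}$-regularity of $u^s$ and the scattering equation \eqref{sca}.
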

We finally point out the following identity that will be useful in the sequel and that can be easily derived from \eqref{1.6} and Fubini's theorem
\begin{equation}
\label{usefulForm}
\int_{\p B} \left( \mathcal{N}_{A,q}f\right) g \ds(x) = \int_D Sg \, Q_{A,q}(\mathcal{M}_{A,q} \mathcal{S} f  +  \mathcal{S} f ) dx \quad \mbox{ for all } f,g \in L^2(\p B).
\end{equation}

 We  establish now the following result that proves that the transpose operator associated with $\mathcal{N}_{A,q}$ is equal to $\mathcal{N}_{-A,q}$. In order to ease the writing, we indicate   two useful formulas that we shall use a few times.  The first one is a consequence of the Green's theorem and states that 
\begin{equation}\label{2.13}
\int_{B}\para{\HAV u_1 \, u_2-u_1\mathcal{H}_{-A,q}u_2}dx=\int_{\partial B}\para{u_1\p_r u_2-u_2\p_r u_1}\ds(x),
\end{equation}
for all $u_1, \, u_2\in H^2(B)$. The second one is a classical consequence of the Green's theorem  and the Rellich lemma and states that \cite{texbook2}
\begin{equation}\label{2.6}
\int_{\partial B}\para{u_1\p_r u_2-u_2\p_r u_1}\ds(x) = 0,
\end{equation}
for all $u_1$, $u_2 \in H^2_{\mathrm{loc}}(\R^3 \setminus {B})$ satisfying the Helmholtz equation $\Delta u +k^2u=0$ in $\R^3\setminus \overline{B}$ and the Sommerfeld radiation condition \eqref{1.5}.

\begin{lemma}\label{L3.1} Let $A\in W^{1,\infty}(\R^3,\R^3)$, $q\in L^\infty(\R^3)$ as above. Let $y,z \in \R^3 \setminus \overline D$  and set 
$$
u^s_{A,q}(\cdot, y) := \mathcal{M}_{A,q}  \Phi(\cdot, y),\quad \textrm{and}\,\, u^s_{-A,q}(\cdot, z) := \mathcal{M}_{-A,q}  \Phi(\cdot, z).
$$
 Then we have the following reciprocity relation, 
$$
 u^s_{A,q}(z, y) = u^s_{-A,q}(y, z).
$$
This reciprocity implies in particular that
$(\mathcal{N}_{A,q})^t = \mathcal{N}_{-A,q}$, i.e.,
\begin{equation}\label{3.2}
\int_{\p B} f \left(\mathcal{N}_{-A,q}g\right) \ds(x)=\int_{\p B} \left( \mathcal{N}_{A,q}f\right) g \, \ds(x)  \quad \mbox{ for all } f,g \in L^2(\p B).
\end{equation}
\end{lemma}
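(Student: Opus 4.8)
The plan is to first establish the pointwise reciprocity relation for the total fields (Green functions) and then deduce the operator identity \eqref{3.2} by a single application of Fubini's theorem. Write $u_1:=u_{A,q}(\cdot,y)=\Phi(\cdot,y)+u^s_{A,q}(\cdot,y)$ and $u_2:=u_{-A,q}(\cdot,z)=\Phi(\cdot,z)+u^s_{-A,q}(\cdot,z)$, which solve $(\HAV-k^2)u_1=\delta_y$ and $(\mathcal{H}_{-A,q}-k^2)u_2=\delta_z$ respectively, together with the radiation condition. Assuming first $y\neq z$, I would apply the bilinear Green identity \eqref{2.13} --- in the version with $\HAV,\mathcal{H}_{-A,q}$ replaced by $\HAV-k^2,\mathcal{H}_{-A,q}-k^2$, which has the \emph{same} right-hand side since the $k^2$-terms cancel in the difference --- on the punctured domain $\Omega_{R,\varepsilon}:=B_R\setminus(\overline{B_\varepsilon(y)}\cup\overline{B_\varepsilon(z)})$, where $B_R$ is a large ball containing $\overline B$, $y$ and $z$, and $\varepsilon$ is small enough that the excised balls are disjoint from $\overline D$. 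Since $\mathrm{Supp}(A)\subset D$ sits in the interior of $\Omega_{R,\varepsilon}$, the boundary contributions from the first order term vanish exactly as in the derivation of \eqref{2.13}, so the identity stays valid on $\Omega_{R,\varepsilon}$. On this domain both sources have been removed, so the left-hand side is zero and I am left with a sum of boundary integrals over $\partial B_R$ and the two small spheres.

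The core of the argument is the asymptotic evaluation of these boundary terms, and this is where I expect the only genuine difficulty, namely keeping track of the orientations and signs. On $\partial B_R$ the functions $u_1,u_2$ are radiating solutions of the Helmholtz equation (the potentials being supported in $D$), so by \eqref{2.6} the contribution over $\partial B_R$ is identically zero. On the sphere $\partial B_\varepsilon(y)$ the outward normal of $\Omega_{R,\varepsilon}$ points towards $y$; decomposing $u_1=\Phi(\cdot,y)+u^s_{A,q}(\cdot,y)$ and using that the regular parts $u^s_{A,q}(\cdot,y)$ and $u_2$ are $C^1$ near $y$ while $\Phi(x,y)\sim(4\pi|x-y|)^{-1}$, the standard computation (pairing $\Phi$ with $\partial_\nu u_2$ and $u_2$ with $\partial_\nu\Phi$) shows that this term tends to $-u_2(y)=-u_{-A,q}(y,z)$ as $\varepsilon\to 0$. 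Symmetrically, the singular part now sitting in $u_2=\Phi(\cdot,z)+u^s_{-A,q}(\cdot,z)$, the integral over $\partial B_\varepsilon(z)$ converges to $+u_1(z)=u_{A,q}(z,y)$. Passing to the limit therefore gives $0=u_{A,q}(z,y)-u_{-A,q}(y,z)$, that is $u_{A,q}(z,y)=u_{-A,q}(y,z)$.

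Since $\Phi$ is symmetric, subtracting $\Phi(z,y)=\Phi(y,z)$ from both sides yields the claimed reciprocity $u^s_{A,q}(z,y)=u^s_{-A,q}(y,z)$ for $y\neq z$; the diagonal case $y=z$ follows by continuity, the scattered fields depending continuously on the source location thanks to Corollary \ref{C2.3} and Sobolev embedding. Finally, to obtain \eqref{3.2} I would take $f,g\in L^2(\p B)$ and compute, using \eqref{3.1}, the reciprocity relation with $z=x$ (both $x,y$ lying on $\partial B\subset\R^3\setminus\overline D$), and Fubini's theorem,
\begin{align*}
\int_{\p B}\para{\mathcal{N}_{A,q}f}g\,\ds(x)
&=\int_{\p B}\int_{\p B}u^s_{A,q}(x,y)f(y)g(x)\,\ds(y)\,\ds(x)\\
&=\int_{\p B}\int_{\p B}u^s_{-A,q}(y,x)f(y)g(x)\,\ds(y)\,\ds(x)\\
&=\int_{\p B}f(y)\para{\mathcal{N}_{-A,q}g}(y)\,\ds(y).
\end{align*}
This is precisely the asserted identity $(\mathcal{N}_{A,q})^t=\mathcal{N}_{-A,q}$, which completes the proof.
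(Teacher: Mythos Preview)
Your proof is correct and follows a genuinely different route from the paper's. The paper works entirely with the \emph{scattered} fields and the Lippmann--Schwinger representation \eqref{1.6}: it starts from $u^s_{A,q}(z,y)=\int_D\Phi(z,t)\,Q_{A,q}u_{A,q}(t,y)\,dt$, then applies \eqref{2.13}--\eqref{2.6} directly to the $H^2(B)$ functions $u^s_{A,q}(\cdot,y)$ and $u^s_{-A,q}(\cdot,z)$ (no singularities to excise), and after two integrations by parts moving $Q_{\pm A,q}$ between $\Phi$ and the scattered/total fields arrives at the symmetric representation $u^s_{A,q}(z,y)=\int_D\Phi(t,y)\,Q_{-A,q}u_{-A,q}(t,z)\,dt=u^s_{-A,q}(y,z)$. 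You instead run the classical Green-function argument on the \emph{total} fields, puncturing small balls around the source points and recovering the point values from the small-sphere limits. Your approach is more direct and PDE-flavored (it is the standard proof of symmetry of fundamental solutions for formally transpose operators) and avoids the chain of volume-integral manipulations; the paper's approach has the advantage of staying inside the $H^2$/Lippmann--Schwinger framework already set up, so no separate limiting argument or local smoothness near the sources is invoked. Both routes end with the same Fubini step for \eqref{3.2}.
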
 
\begin{proof}
From equation \eqref{1.6}
$$
u^s_{A,q}(z,y)= \int_{D}\Phi(z,t)Q_{A,q} \, (u^s_{A,q}(t,y) + \Phi(t, y)) dt.
$$
On the other hand, applying \eqref{2.13} and \eqref{2.6} to $u_1=u^s_{A,q}(\cdot,  y)$ and  $u_2=u^s_{-A,q}(\cdot, z)$
implies
$$
\int_{B}\para{\HAV u^s_{A,q}(t,y) \, u^s_{-A,q}(t,z)-u^s_{A,q}(t ,y)\mathcal{H}_{-A,q} u^s_{-A,q}(t,z)}dt=0.
$$
Using \eqref{sca} yields
$$
\int_{D}\para{Q_{A,q}  \Phi(t,y) \, u^s_{-A,q}(t,z)-u^s_{A,q}(t ,y)Q_{-A,q} \Phi(t,z)}dt=0.
$$
Using the Green's theorem we obtain (since $y, z  \in \R^3 \setminus \overline D$ and $A$ has compact support in $D$)
$$
\int_{B}\para{  \Phi(t,y) \, Q_{-A,q}u^s_{-A,q}(t,z)-Q_{A,q}u^s_{A,q}(t ,y) \Phi(t,z)}dt=0.
$$
When then conclude, since  $\Phi(z,t) = \Phi(t,z)$
$$
u^s_{A,q}(z,y)= \int_{D}\big(\Phi(t,y) \, Q_{-A,q}u^s_{-A,q}(t,z) + \Phi(t,z)Q_{A,q} \Phi(t, y)\big) dt.
$$
Applying the Green's theorem to the second term in the integral finally shows that
$$
u^s_{A,q}(z,y)= \int_{D}\Phi(t,y) \, Q_{-A,q}\big (u^s_{-A,q}(t,z) + \Phi(t,z)\big) dt = u^s_{-A,q}(y,z).
$$
Identity \eqref{3.2} is a direct consequence of the reciprocity relation and the Fubini theorem.
\end{proof}
\section{Stability analysis for near field data}
\setcounter{equation}{0}
The aim of this section is to prove the stability estimates given in Theorem \ref{T1.1}. The first step will be to use the properties of the  near fields to prove an orthogonality identity, which relates the difference of potentials to the difference of near field operators. Then we will use a special family of solutions called complex geometric optics solutions (CGO-solutions) to estimate the Fourier transform of the difference of the magnetic fields and the difference of the electric potentials.
\smallskip
  
Consider two pairs of potentials $(A_j, q_j) \in W^{1,\infty}(\R^3, \R^3) \times L^\infty(\R^3,\C)$, $j=1,2$, satisfying the same assumptions as $(A,q)$ in the beginning of the previous section. We set
\begin{equation}\label{3.3}
A(x):=(A_2-A_1)(x),\quad  q(x):=(q_2-q_1)(x),\quad x\in\R^3,
\end{equation}
and introduce  the first order operator $\mathcal{P}_{A_1,A_2,q}$ defined by
\begin{equation}\label{3.4}
\mathcal{P}_{(A_1,A_2,q)}v:=i\textrm{div}(Av)+iA\cdot\nabla v+(\abs{A_2}^2-\abs{A_1}^2+q)v,\quad v\in H^1(\R^3),
\end{equation}
here we remark that the coefficients of the first order operator $\mathcal{P}_{(A_1,A_2,q)}$ are supported in $D$.
\subsection{An orthogonality identity and a key integral inequality}
First, we present an orthogonality identity, which relates the difference of potentials to the difference of near field operators.
\begin{lemma}\label{L3.2}
 Let $f_1,\,f_2\in L^2(\p B)$ and set 
 \begin{equation}\label{wj}
u^s_1:=\mathcal{M}_{-A_1,q_1} \mathcal{S} f_1, \quad  \quad u^s_2:=\mathcal{M}_{A_2,q_2} \mathcal{S} f_2 \quad \mbox{ in } \R^3,
\end{equation}
and for $j=1, 2$,
\begin{equation}\label{vj}
v_j :=  \mathcal{S} f_j   \quad\mbox{ in } D 
\quad \mbox{ and }  \quad u_j :=  v_j + u^s_j  \quad  \mbox{ in } D.
\end{equation}
Then the following identity holds true.
\begin{equation}\label{3.5}
\int_{\p B}\para{\mathcal{N}_{A_1,q_1} f_2-\mathcal{N}_{A_2,q_2}f_2} \, f_1 \, \ds(x)=\int_{D}\mathcal{P}_{(A_1,A_2,q)}u_1 \, u_2 \,dx.
\end{equation}
\end{lemma}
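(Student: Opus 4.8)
The plan is to collapse the boundary pairing on the left into a volume integral over $D$ built from the two total fields $u_1,u_2$, and then to recognise that integral, after a single integration by parts, as $\int_D \mathcal{P}_{(A_1,A_2,q)}u_1\,u_2\,dx$. The appearance of the \emph{reversed} magnetic potential $-A_1$ in the definition \eqref{wj} of $u^s_1$ is the structural key: the reciprocity of Lemma \ref{L3.1} converts $\mathcal{N}_{A_1,q_1}$ into its transpose $\mathcal{N}_{-A_1,q_1}$, and $\mathcal{H}_{-A_1,q_1}$ is exactly the operator that pairs with $\mathcal{H}_{A_2,q_2}$ through the representation formula, so that the first order terms can be reorganised into $\mathcal{P}_{(A_1,A_2,q)}$.

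First I would treat the two contributions to the left-hand side separately. For the term carrying $\mathcal{N}_{A_1,q_1}$ I would apply the reciprocity identity \eqref{3.2} with the pair $(A_1,q_1)$ to write $\int_{\partial B}(\mathcal{N}_{A_1,q_1}f_2)\,f_1\,\ds(x)=\int_{\partial B}(\mathcal{N}_{-A_1,q_1}f_1)\,f_2\,\ds(x)$. I would then invoke the identity \eqref{usefulForm} on each term, with $(A,q)=(-A_1,q_1)$, $f=f_1$, $g=f_2$ for the first and $(A,q)=(A_2,q_2)$, $f=f_2$, $g=f_1$ for the second. Since $\mathcal{M}_{A,q}\mathcal{S}f+\mathcal{S}f$ is the total field and $\mathcal{S}f_j=v_j$, this produces
\[
\int_{\partial B}\big(\mathcal{N}_{A_1,q_1}f_2-\mathcal{N}_{A_2,q_2}f_2\big)\,f_1\,\ds(x)=\int_D\big(v_2\,Q_{-A_1,q_1}u_1-v_1\,Q_{A_2,q_2}u_2\big)\,dx.
\]

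Next I would replace the incident fields $v_j$ by the total fields through $v_j=u_j-u^s_j$. Using the volume representation \eqref{1.6}, namely $u^s_1(x)=\int_D\Phi(x,t)\,Q_{-A_1,q_1}u_1(t)\,dt$ and $u^s_2(x)=\int_D\Phi(x,t)\,Q_{A_2,q_2}u_2(t)\,dt$, together with the symmetry $\Phi(x,t)=\Phi(t,x)$, a relabelling of the two integration variables gives $\int_D u^s_2\,Q_{-A_1,q_1}u_1\,dx=\int_D u^s_1\,Q_{A_2,q_2}u_2\,dx$. Hence the scattered contributions cancel and the right-hand side reduces to $\int_D\big(u_2\,Q_{-A_1,q_1}u_1-u_1\,Q_{A_2,q_2}u_2\big)\,dx$.

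The remaining and most delicate step is the purely algebraic identity
\[
\int_D\big(Q_{-A_1,q_1}u_1\,u_2-u_1\,Q_{A_2,q_2}u_2\big)\,dx=\int_D \mathcal{P}_{(A_1,A_2,q)}u_1\,u_2\,dx.
\]
Here I would expand both operators by \eqref{1.2}: the zeroth order parts immediately combine into $(\abs{A_2}^2-\abs{A_1}^2+q)u_1u_2$, which is exactly the potential term of $\mathcal{P}_{(A_1,A_2,q)}$ in \eqref{3.4}. For the first order parts I would integrate the divergence terms $-i\,\textrm{div}(A_1u_1)u_2$ and $-i\,u_1\textrm{div}(A_2u_2)$ by parts; because $\mathrm{Supp}(A_1),\mathrm{Supp}(A_2)\subset D$ there are no boundary contributions, and the surviving gradient terms regroup, with $A=A_2-A_1$, into $-i\,(A\cdot\nabla u_2)\,u_1+i\,(A\cdot\nabla u_1)\,u_2$. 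One more integration by parts of the term $i\,\textrm{div}(Au_1)u_2$ in \eqref{3.4} shows this is precisely $\mathcal{P}_{(A_1,A_2,q)}u_1\,u_2$ integrated over $D$. The bookkeeping of these first order terms is the main obstacle: one must track carefully which factor the gradient acts on and verify that the single integration by parts is justified exactly because the potentials are compactly supported in $D$.
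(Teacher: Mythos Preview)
Your proof is correct and follows essentially the same route as the paper: reciprocity \eqref{3.2} plus \eqref{usefulForm} to reach $\int_D(v_2\,Q_{-A_1,q_1}u_1-v_1\,Q_{A_2,q_2}u_2)\,dx$, then replacement of $v_j$ by $u_j$, and finally an integration by parts to recognise $\mathcal{P}_{(A_1,A_2,q)}$. The only minor variation is in how the scattered contributions are shown to cancel: you obtain $\int_D u^s_2\,Q_{-A_1,q_1}u_1\,dx=\int_D u^s_1\,Q_{A_2,q_2}u_2\,dx$ directly from the representation \eqref{1.6}, Fubini, and the symmetry $\Phi(x,t)=\Phi(t,x)$, whereas the paper derives the same identity by applying Green's theorem in $B$ to $u^s_1,u^s_2$ and invoking \eqref{2.6}.
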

\begin{proof}
Using \eqref{3.2} we  first observe that
$$
\int_{\p B}\para{\mathcal{N}_{A_1,q_1} f_2-\mathcal{N}_{A_2,q_2}f_2} \, f_1 \, \ds(x) = \int_{\p B}\para{\mathcal{N}_{-A_1,q_1} f_1} \, f_2 - \para{\mathcal{N}_{A_2,q_2}f_2}  \, f_1 \, \ds(x).
 $$
 We deduce from \eqref{usefulForm} that 
 \begin{equation}
 \label{hit1}
 \int_{\p B}\para{\mathcal{N}_{A_1,q_1} f_2-\mathcal{N}_{A_2,q_2}f_2} \, f_1 \, \ds(x) = \int_{D} (v_2 \, Q_{-A_1,q_1} u_1 - v_1 \,  Q_{A_2,q_2} u_2) \, dx.
\end{equation}
Applying  the Green's theorem and \eqref{2.6} to $u^s_1$ and  $u^s_2$
implies
$$
\int_{B}\para{ u^s_2\, (-\Delta  u^s_1 -k^2 u^s_1) -u^s_1 (-\Delta u^s_2 -k^2 u^s_2)}dx=0, 
$$
which yields according to \eqref{sca},
$$
\int_{D}\para{u^s_2\, (Q_{-A_1,q_1} u_1) -(Q_{A_2,q_2} u_2 )u^s_1} dx=0. 
$$
Adding the left hand side of  this equality to the right hand side of \eqref{hit1} shows that 
\begin{equation}
 \label{hit2}
 \int_{\p B}\para{\mathcal{N}_{A_1,q_1} f_2-\mathcal{N}_{A_2,q_2}f_2} \, f_1 \, \ds(x) = \int_{D} (u_2 \, Q_{-A_1,q_1} u_1 - u_1 \,  Q_{A_2,q_2} u_2) \, dx.
\end{equation}
The result of the lemma follows from \eqref{hit2} after integrating by parts in the right hand side and observing that
$$
Q_{-A_1,q_1} u_1 - Q_{-A_2,q_1} u_1 = \mathcal{P}_{(A_1,A_2,q)} u_1.
$$
This completes the proof.
\end{proof}
We now prove the fundamental integral inequality, which relates the difference of two magnetic potentials and electric potential in $D$ to the difference between their corresponding near pattern fields. This integral inequality will be the starting point in the proof of the stability estimate for the corresponding inverse problem.
\begin{lemma}\label{L3.3}
There is a constant $C>0$  that only depends on $B$ and $k$ such that
\begin{multline}\label{3.10}
\left| \int_{B}  [iA\cdot (u_1 \nabla u_2-u_2\nabla u_1) -(\abs{A_2}^2-\abs{A_1}^2+q)u_1u_2]dx \right|\cr
\leq  C \norm{ \mathcal{N}_{A_1,q_1}-\mathcal{N}_{A_2,q_2}}\norm{u_1}_{H^2(B)} \norm{u_2}_{H^2(B)}
\end{multline}
for all $u_1 \in H^2(B)$ satisfying  $\mathcal{H}_{-A_1,q_1} u_1=k^2 u_1$ in $B$ and all $u_2 \in H^2(B)$ satisfying $\mathcal{H}_{A_2,q_2} u_2=k^2 u_2$ in $B$.
\end{lemma}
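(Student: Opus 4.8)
The plan is to recognize the left-hand side of \eqref{3.10} as (the opposite of) $\int_D \mathcal{P}_{(A_1,A_2,q)} u_1 \, u_2 \, dx$. Indeed, since $\mathrm{Supp}(A)\subset D$ and $\mathrm{Supp}(q)\subset D$ with $A = A_2 - A_1$, integrating the term $i\,\mathrm{div}(A u_1) u_2$ appearing in \eqref{3.4} by parts produces no boundary contribution, and gives
$$\int_D \mathcal{P}_{(A_1,A_2,q)} u_1 \, u_2 \, dx = -\int_B \big[ iA\cdot(u_1\nabla u_2 - u_2 \nabla u_1) - (\abs{A_2}^2 - \abs{A_1}^2 + q) u_1 u_2 \big] dx.$$
Thus it suffices to bound $\abs{\int_D \mathcal{P}_{(A_1,A_2,q)} u_1 u_2 \, dx}$ by the right-hand side of \eqref{3.10}. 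The identity \eqref{3.5} of Lemma \ref{L3.2} already expresses precisely this integral through the near field operators, but only for the special total fields generated by single-layer densities. The heart of the argument is therefore to show that the \emph{arbitrary} $H^2(B)$ solutions $u_1$ and $u_2$ in the statement can be realized as such total fields, with the generating densities controlled by the $H^2(B)$ norms.

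For this representation I would argue as follows for $u_1$ (the case of $u_2$ being identical, with $(-A_1,q_1)$ replaced by $(A_2,q_2)$). Given $u_1 \in H^2(B)$ with $\mathcal{H}_{-A_1,q_1} u_1 = k^2 u_1$ in $B$, solve the exterior Dirichlet problem $\Delta w + k^2 w = 0$ in $\R^3 \setminus \overline{B}$ with boundary data $w = u_1$ on $\partial B$ and the radiation condition \eqref{1.5}; this is uniquely solvable, $w \in H^2_{\mathrm{loc}}(\R^3 \setminus \overline{B})$, and $\norm{\p_\nu w}_{L^2(\p B)} \le C \norm{u_1|_{\p B}}_{H^{3/2}(\p B)} \le C \norm{u_1}_{H^2(B)}$, with $C$ depending only on $B$ and $k$. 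Defining $U$ to equal $u_1$ inside $B$ and $w$ outside, and setting $f_1 := \p_\nu U^- - \p_\nu U^+ \in L^2(\p B)$, one checks that $U$ solves the transmission problem \eqref{2.19} for the potential $(-A_1,q_1)$ with datum $f_1$: the equation holds in $B$ by hypothesis and outside $B$ since $A_1,q_1$ are supported in $D$, the trace is continuous across $\p B$ by construction, and the radiation condition is inherited from $w$. Since the total field produced by Lemma \ref{L2.4} from the density $f_1$ solves the same transmission problem, and this problem has at most one solution (a solution with $f_1=0$ has matching Cauchy data on $\p B$, hence is a global radiating solution of $\mathcal{H}_{-A_1,q_1} U = k^2 U$, thus zero by the Rellich and unique continuation argument of Proposition \ref{invertTAq}), the two coincide. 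In particular $u_1$ is exactly the interior total field associated with $f_1$, and $\norm{f_1}_{L^2(\p B)} \le C \norm{u_1}_{H^2(B)}$.

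With the densities $f_1,f_2$ in hand and $\norm{f_j}_{L^2(\p B)} \le C \norm{u_j}_{H^2(B)}$, Lemma \ref{L3.2} yields
$$\int_D \mathcal{P}_{(A_1,A_2,q)} u_1 \, u_2 \, dx = \int_{\p B} \para{\mathcal{N}_{A_1,q_1} f_2 - \mathcal{N}_{A_2,q_2} f_2}\, f_1 \, \ds(x),$$
and Cauchy--Schwarz together with the definition of the operator norm bounds the right-hand side by $\norm{\mathcal{N}_{A_1,q_1} - \mathcal{N}_{A_2,q_2}} \norm{f_1}_{L^2(\p B)} \norm{f_2}_{L^2(\p B)} \le C \norm{\mathcal{N}_{A_1,q_1} - \mathcal{N}_{A_2,q_2}} \norm{u_1}_{H^2(B)} \norm{u_2}_{H^2(B)}$, which is \eqref{3.10}. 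The main obstacle in this scheme is the representation step: establishing solvability and the quantitative $H^2(B)\to L^2(\p B)$ bound for the exterior Dirichlet problem, and invoking uniqueness of the transmission problem \eqref{2.19} to identify the constructed extension $U$ with the total field of Lemma \ref{L2.4}. The remaining manipulations, namely the integration by parts in the first paragraph and the final Cauchy--Schwarz estimate, are routine.
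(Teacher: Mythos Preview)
Your proposal is correct and follows essentially the same route as the paper's own proof: extend each $u_j$ to $\R^3\setminus\overline{B}$ by solving the exterior Dirichlet problem for the Helmholtz equation, set $f_j:=\partial_\nu u_j^{-}-\partial_\nu u_j^{+}$, bound $\|f_j\|_{L^2(\partial B)}$ by $C\|u_j\|_{H^2(B)}$ via trace theorems and elliptic regularity, identify the extended $u_j$ with the total field of Lemma~\ref{L2.4}, and then apply Lemma~\ref{L3.2} together with Cauchy--Schwarz. The paper is slightly more terse about the identification step (it simply asserts that $u_j^s:=u_j-\mathcal{S}f_j$ coincides with the scattered field of Lemma~\ref{L2.4}), whereas you spell out the uniqueness argument for the transmission problem; this is a welcome clarification, not a different idea.
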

\begin{proof}
Let $ u_j $, $j=1,2$, be given as in the lemma. Let $u^{+}_j \in H^1_{\textrm{loc}}(\R^3 \setminus \overline{B})$ be the outgoing  solution to the following exterior Dirichlet problem (see for instance \cite{texbook2})
$$ \left \{
\begin{array}{lll}
\Delta u^{+}_j+k^2u^{+}_j=0 & \mbox{ in }  \R^3 \setminus \overline{B},\\
u^{+}_j=u_j & \mbox{ on }  \partial B,
\end{array}
\right . 
$$
and $u^{+}_j$ satisfies the Sommerfeld radiation condition \eqref{1.5}.
Elliptic regularity infers that $u^{+}_j \in H^2_{\textrm{loc}}(\R^3 \setminus \overline{B})$ and in particular, by trace theorems \cite[Theorem 2.1 in Chapter 4]{lionsmagenes}, 
\begin{equation}
\label{f1e}
\Vert \partial_\nu u^{+}_j\Vert_{L^2(\partial B)}\leq \tilde C \Vert u_j\Vert_{H^2( B)},
\end{equation}
for some constant $\tilde C$ that only depends on $B$ and $k$. Let us extend the functions $u_j$ as follows 
$$u_j(x):=\left \{
\begin{array}{lll}
u^{-}_{j}(x)=u_j(x) & \textrm{if}\;  x \in B,\\
u^{+}_j(x) & \textrm{if}\;x \in  \R^3 \setminus \overline{B},
\end{array}
\right. 
$$
and set
\begin{equation}
\label{deffj}
f_j:=\p_\nu u_j^{-}-\p_\nu u_j^{+}\quad  \textrm{on} \; \p B.
\end{equation}
By trace theorems and \eqref{f1e} we have that $f_j \in L^2(\p B)$ and 
\begin{equation}\label{fj}
\| f_j \|_{L^2(\p B)}^2 \leq C \Vert u_j\Vert_{H^2( B)}^2,
\end{equation}
for some constant $C$ that only depends on $B$ and $k$.
One can easily check  that  $u_1$ and $u_2$ satisfy the following transmission problems respectively
$$
\left \{
\begin{array}{lll}
\mathcal{H}_{-A_1,q_1} u_1=k^2u_1  & \textrm{ in } \R^3\backslash\p B,\\
u^{-}_1=u^{+}_1  &\textrm{ on }  \partial B,\\
\p_\nu u^{-}_1-\p_\nu u^{+}_1=f_1  &\textrm{ on }  \partial B,
\end{array}
\right . 
\qquad;\qquad 
\left \{
\begin{array}{lll}
\HAVB u_2=k^2u_2  & \textrm{ in } \R^3\backslash \p B,\\
u^{-}_2=u^{+}_2  &\textrm{ on }  \partial B,\\
\p_\nu u^{-}_2-\p_\nu u^{+}_2=f_2 &\textrm{ on }  \partial B.
\end{array}
\right . 
$$
Moreover, $u_j^{+}$, $j=1,2$, satisfy the Sommerfeld radiation condition \eqref{1.5}.  Consider now the functions 
$$
v_j(x):=\mathcal{S}f_j= \int_{\partial B} \Phi(x,y) f_j(y) \,\ds(y) \quad x \notin \p B, \quad j=1,2.
$$
Therefore, $u^s_j:= u_j - v_j$, $j=1,2$, are the same as in Lemma \ref{L2.4} and then satisfies Lemma \ref{L3.1},  i.e they verify \eqref{wj}-\eqref{vj}. Consequently, identity (\ref{3.5}) holds, namely
$$
\int_{D}\mathcal{P}_{(A_1,A_2,q)}u_1 \, u_2 \,dx = \int_{\p B}\para{\mathcal{N}_{A_1,q_1} f_2-\mathcal{N}
_{A_2,q_2}f_2} \, f_1 \, \ds(x).
$$
In view of (\ref{3.4}), we get 
\begin{multline}\label{3.15}
\int_{B}  [iA\cdot(u_1\nabla u_2-u_2\nabla u_1) -(\abs{A_2}^2-\abs{A_1}^2+q)u_1u_2]dx \cr
= \int_{\p B}\para{\mathcal{N}_{A_1,q_1} f_2-\mathcal{N}_{A_2,q_2}f_2} \, f_1 \, \ds(x).
\end{multline}
Consequently, 
\begin{multline}\label{3.16}
\left| \int_{B}   [iA\cdot(u_1\nabla u_2-u_2\nabla u_1) -(\abs{A_2}^2-\abs{A_1}^2+q)u_1u_2]dx \right| \cr
\leq \Vert \mathcal{N}_{A_1,q_1}-\mathcal{N}_{A_2,q_2}\Vert\Vert f_1\Vert_{L^2(\p B)} \Vert f_2\Vert_{L^2(\p B)}.
\end{multline}
Identity \eqref{3.10} immediately follows from \eqref{3.16} and \eqref{fj}.
\end{proof}
\subsection{Complex geometric optics solutions-CGO}
 The main strategy of the proof of stability estimate on determining the magnetic field and the electric potential  from the near field data is the use of complex geometrical optics solutions in \eqref{3.10} to estimate the Fourier coefficients of the difference of two magnetic fields $\mathrm{curl}(A_2-A_1)$ and the difference of two potential $q_2-q_1$. We therefore first outline some known results about these special solutions extracted from the literature \cite{texbook4, texbook25, texbook9, texbook6}.
Let $\omega =\omega_1+i\omega_2$ be a vector with $\omega_1,\omega_2\in \sss$ and $\omega_1 \cdot\omega_2=0$. We define the operator $N_\omega :=\omega\cdot \nabla$. Since this operator can be interpreted as the $\overline{\partial}$ operator in the complex plane defined by $\omega_1$ and $\omega_2$ one can construct an inverse operator that can be formally defined by
\begin{equation}\label{4.2}
N^{-1}_\omega (g)(x)=\frac{1}{(2\pi)^3}\int_{\R^3} e^{-ix\cdot \xi} \Big( \frac{\hat{g}(\xi)}{\omega \cdot \xi} \Big)d\xi,
\end{equation}
for a compactly supported distribution $g$ (for instance). Some key mapping properties of $N^{-1}_\omega $ are summarized in the appendix. We remark that if $\rr \in \C^3$ satisfies $\rr\cdot \rr=0$, then $\rho = s\omega$ with $s=\frac{\vert \rr\vert}{\sqrt{2}}$ and $\omega$ is as above.  With this notation for $\rho$ we have the following Lemma where the proof can be found in \cite{texbook4} which require more regularity than $W^{1,\infty}$ for the magnetic potentials.
\begin{lemma}\label{L4.1}
Let $A \in W^{2,\infty}(D)$ and $q\in L^\infty(D)$ such that $\norm{A}_{W^{2,\infty}}\leq M$, $\norm{q}_{L^\infty}\leq M$ for some positive constant $M$, and $\mathrm{Supp}(A)$, $\mathrm{Supp}(q)\subset D$. There exists $s_0>0$ such that for any $s\geq s_0$, $\rr =s\omega$ satisfying $\rr \cdot \rr =0$, there exist complex geometrical  solutions $ u(\cdot ,\rr) \in H^2(B)$ of the form
\begin{equation*}\label{4.3}
u(x,\rr)=e^{ix\cdot \rr}(e^{i\varphi (x,\omega)}+r(x, \rr )),
\end{equation*}
to the equation $\HAV u=k^2u$ in $B$, where $\varphi (x,\omega)=N^{-1}_\omega (-\omega \cdot A)$ and
\begin{equation*} \label{4.5}
\Vert r(\cdot ,\rr)\Vert_{H^m(B)}\leq Cs^{m-1}, \quad 0\leq m\leq 2 \quad\mbox{ and } \quad\Vert u(\cdot ,\rr)\Vert_{H^2(B)}\leq Cs^{2} e^{\Lambda s},
\end{equation*}
where $C$, $\Lambda$ and $s_0$ depend only on $B$, $k$ and $M$.
\end{lemma}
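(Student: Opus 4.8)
The plan is to produce $u$ by the standard conjugate-and-correct construction of Faddeev-type solutions, adapted to the magnetic term. First I would insert the ansatz $u=e^{ix\cdot\rr}\psi$ with $\psi=e^{i\varphi}+r$ and compute the conjugated operator $L_\rr:=e^{-ix\cdot\rr}\para{\HAV-k^2}e^{ix\cdot\rr}$. Since $\rr\cdot\rr=0$, the exponential $e^{ix\cdot\rr}$ is harmonic and the conjugated Laplacian collapses to $\Delta+2i\rr\cdot\nabla=\Delta+2is\,N_\omega$. Expanding $\HAV=-\Delta-Q_{A,q}$ and collecting powers of $s$, the equation $L_\rr\psi=0$ reads
$$
-2is\para{N_\omega\psi+i(\omega\cdot A)\psi}-\Delta\psi-2iA\cdot\nabla\psi+\para{-i\,\mathrm{div}\,A+\abs{A}^2+q-k^2}\psi=0,
$$
so that the sole contribution of order $s$ is the transport operator $\mathcal{T}:=N_\omega+i(\omega\cdot A)$.

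Second, I would use the phase to annihilate this leading-order obstruction. Because $\varphi=N_\omega^{-1}(-\omega\cdot A)$ solves $N_\omega\varphi=-\omega\cdot A$, one verifies at once that $\mathcal{T}e^{i\varphi}=e^{i\varphi}\para{iN_\omega\varphi+i\,\omega\cdot A}=0$; this is exactly why the amplitude is taken to be $e^{i\varphi}$. The hypothesis $A\in W^{2,\infty}$ enters here: through the mapping properties of $N_\omega^{-1}$ recorded in the appendix it guarantees that $\varphi$ carries enough derivatives (roughly two) for the remaining first- and second-order terms applied to $e^{i\varphi}$ to be controlled uniformly in $\omega$.

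Third, I would solve for the remainder. Writing $\psi=e^{i\varphi}+r$ turns $L_\rr\psi=0$ into $L_\rr r=-L_\rr e^{i\varphi}$, whose right-hand side is $O(1)$ in $s$ since $\mathcal{T}e^{i\varphi}=0$. Setting $r=e^{i\varphi}\tild r$ and using the conjugation identity $e^{-i\varphi}\mathcal{T}\para{e^{i\varphi}\,\cdot\,}=N_\omega$ removes the residual zeroth-order magnetic term of order $s$, so that the principal part of the equation for $\tild r$ becomes $\Delta+2is\,N_\omega$. Inverting this operator by means of the $s$-dependent bounds for $N_\omega^{-1}$ from the appendix — which supply a gain of a factor $1/s$ in the relevant (weighted) Sobolev norms — recasts the problem as a fixed point of the form $\tild r=\frac1s\mathcal{K}_s\tild r+\frac1s g_s$, with $\mathcal{K}_s$ and $g_s$ assembled from $N_\omega^{-1}$ composed with the lower-order terms and the amplitude. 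For $s\ge s_0$ the map $\frac1s\mathcal{K}_s$ is a contraction, producing a unique $\tild r$ and, after undoing the conjugation, the bounds $\norm{r(\cdot,\rr)}_{H^m(B)}\le Cs^{m-1}$ for $0\le m\le2$.

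Finally, I would assemble the estimate on $u$. On the bounded set $B$ one has $\abs{e^{ix\cdot\rr}}=e^{-x\cdot\Im\rr}\le e^{\Lambda s}$, since $\abs{\Im\rr}=s$ and $\abs{x}$ is bounded there, while each derivative falling on $e^{ix\cdot\rr}$ contributes a factor $O(s)$; combining this with $\norm{e^{i\varphi}+r}_{H^2(B)}=O(s)$ yields $\norm{u(\cdot,\rr)}_{H^2(B)}\le Cs^2e^{\Lambda s}$. I expect the main obstacle to lie entirely in the third step: establishing the precise, $s$-dependent mapping estimates for $N_\omega^{-1}$ (equivalently for the Faddeev operator $\Delta+2is\,N_\omega$) and bookkeeping the powers of $s$ so that the correction operator genuinely contracts and $r$ obeys $\norm{r}_{H^m(B)}\le Cs^{m-1}$; it is also there that the regularity $A\in W^{2,\infty}$ is indispensable, ensuring that the first- and second-order terms applied to $e^{i\varphi}$ land, uniformly in $\omega$, in the space on which $N_\omega^{-1}$ acts boundedly.
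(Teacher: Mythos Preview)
The paper does not supply its own proof of this lemma: it simply states that ``the proof can be found in \cite{texbook4}'' and moves on. Your sketch is precisely the standard Faddeev/CGO construction used in \cite{texbook4}, \cite{texbook25}, \cite{texbook6}: conjugate by $e^{ix\cdot\rr}$, choose the phase $\varphi$ to kill the $O(s)$ transport term, and close a contraction for the remainder using the $1/s$ gain in the inverse of $\Delta+2is\,N_\omega$. So in substance you are reproducing exactly the argument the paper defers to.

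One small correction is worth flagging. You write that you would invert $\Delta+2is\,N_\omega$ ``by means of the $s$-dependent bounds for $N_\omega^{-1}$ from the appendix''. The appendix of this paper (Lemmas~\ref{LA.2} and~\ref{LA.3}) only records the \emph{$s$-independent} mapping properties of $N_\omega^{-1}$ itself; the crucial $1/s$ gain lives in the inverse of the full Faddeev operator $\Delta+2i\rr\cdot\nabla$ and is not proved here. That estimate --- typically stated in weighted $L^2_\delta$ spaces as in Sylvester--Uhlmann, or via semiclassical Carleman estimates as in \cite{texbook25}, \cite{texbook6} --- is precisely the external input the paper is importing from \cite{texbook4}. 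Your identification of this as ``the main obstacle'' in the third step is exactly right; just be aware that it is not supplied by this paper's appendix.
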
  

In the remainder of this section we consider two pairs of potentials $(A_j, q_j)\in W^{2,\infty} \times L^\infty$, $j=1,2$, with  $\mathrm{Supp}(A)$, $\mathrm{Supp}(q)\subset D$, $\Im(q_j)\geq 0$ and satisfying 
\begin{equation}\label{Asymp-Aq2}
\norm{A_j}_{W^{2,\infty}}\leq M,\quad \norm{q_j}_{L^\infty}\leq M,\quad j=1,2,
\end{equation}
 for some $M>0$ fixed and set as previously
\begin{equation}\label{Aq}
 A(x):=(A_2-A_1)(x),\quad  q(x):=(q_2-q_1)(x),\quad x\in\R^3.
\end{equation}
\smallskip

Let $\xi\in \Rt$, $\omega_1$, $\omega_2\in\sss$ be three mutually orthogonal vectors in $\R^3$. For each $s> \abs{\xi}/2$, let 
\begin{equation}\label{4.6}
\rho_1=s\Big(i\omega_2+\big(\frac{\xi}{2s} -\sqrt{1-\frac{\vert \xi \vert^2}{4s^2}}\omega_1 \big) \Big):=s\omega^{*}_1(s),
\end{equation}
\begin{equation}\label{4.7}
\rho_2=s\Big(-i\omega_2+\big( \frac{\xi}{2s}+\sqrt{1-\frac{\vert \xi \vert^2}{4s^2}}\omega_1 \big) \Big):=s\omega^{*}_2(s).
\end{equation}
For $s\ge s_0>0$ for some $s_0$ sufficiently large (that only depends on $B$ and $k$),  Lemma \ref{L4.1} guarantees the existence of the geometrical optics solutions: $u_1 \in H^2(B)$ verifying $\h_{-A_1,q_1}u_1=k^2 u_1$ in $B$ and $u_2\in H^2(B)$ verifying $\h_{A_2,q_2} u_2=k^2u_2$ in $B$ and such that 
\begin{equation}\label{4.8}
u_j(x)=e^{ix\cdot \rr_j}(e^{i\varphi_j (x,\omega_j^{*})}+r_j(x,\rr_j )),
\end{equation}
where $r_j(\cdot,\rr_j)$, $j=1,2$, satisfies 
\begin{equation}\label{4.9}
\Vert r_j(\cdot ,\rr_j)\Vert_{H^m(D)}\leq Cs^{m-1}, \quad 0\leq m\leq 2,
\end{equation}
and where $\varphi_1(x,\omega_1^{*})=N_{\omega_1^{*}}^{-1}(\omega_1^{*} \cdot A_1)$ and $\varphi_2(x,\omega_2^{*})=N_{\omega_2^{*}}^{-1}(-\omega_2^{*} \cdot A_2)$ are a solutions of 
\begin{equation}
\omega_1^{*}\cdot\nabla\varphi_1=\omega_1^{*}\cdot A_1,\quad \omega_2^{*}\cdot\nabla\varphi_2=-\omega_2^{*}\cdot A_2.
\end{equation}
Furthermore, according to (\ref{4.8}),  (\ref{4.6}) and (\ref{4.7}), there exist $C$ and $\Lambda>0$ such that
\begin{equation}\label{4.16}
\Vert u_1u_2\Vert_{L^1(B)} \leq C, \quad \text{and}\quad \Vert u_j\Vert_{H^2(B)} \leq Cs^2e^{\Lambda s}, \text{for }\, j=1,2.
\end{equation}
\subsection{Stability estimate for the magnetic potential}
We derive in this section a stability estimate for the magnetic fields. First, we will use Lemma \ref{L3.2} and the complex geometric solutions constructed as above to estimate the Fourier transform of the difference of the magnetic fields $\textrm{curl}\, A$. Second, we exploit the condition $\norm{\widehat{\mathrm{curl}A}}_{L^\sigma}$, $\sigma>0$, is a priori bounded to prove the stability estimate.
\begin{lemma}\label{L4.2}
Let $u_j$, $j=1,2$, be the functions given by (\ref{4.8}) and set $\omega = \omega_1+i\omega_2$. Then for any $\abs{\xi}\leq s$, we have the following identity
\begin{equation}\label{4.11}
i\int_D  A(x)\cdot \left(  u_2\nabla u_1-u_1 \nabla u_2 \right) dx=2s\int_D \overline{\omega}\cdot  A(x) e^{ix\cdot \xi }dx+\mathcal{R}(\xi,s),
\end{equation} 
with $\vert \mathcal{R}(\xi,s)\vert \leq C\seq{\xi}$, where $C$ is independent of $s$, $\xi$ and $M$, with the short notation  $\seq{\xi}:=\sqrt{|\xi|^2 + 1}$.
\end{lemma}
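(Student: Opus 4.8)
The plan is to substitute the complex geometric optics solutions \eqref{4.8} directly into the left-hand side of \eqref{4.11} and to isolate the unique contribution that scales linearly in $s$. Writing $u_j=e^{ix\cdot\rho_j}g_j$ with $g_j=e^{i\varphi_j}+r_j$ and using $\nabla u_j=e^{ix\cdot\rho_j}(i\rho_j g_j+\nabla g_j)$ together with $\rho_1+\rho_2=\xi$, one obtains
\[
iA\cdot\para{u_2\nabla u_1-u_1\nabla u_2}=e^{ix\cdot\xi}\cro{-(\rho_1-\rho_2)\cdot A\,g_1g_2+iA\cdot\para{g_2\nabla g_1-g_1\nabla g_2}}.
\]
Since $-(\rho_1-\rho_2)=2s\para{\beta\omega_1-i\omega_2}$ with $\beta:=\sqrt{1-\abs{\xi}^2/(4s^2)}$, the first summand carries the whole factor $s$ whereas the gradient summand is $O(1)$. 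I would therefore declare the leading term to be $2s\int_D\overline{\omega}\cdot A\,e^{ix\cdot\xi}\,dx$ and show that everything else is bounded by $C\seq{\xi}$.

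I would first dispose of the elementary remainders. Replacing $\beta$ by $1$ costs $2s\abs{1-\beta}\le \abs{\xi}^2/(2s)\le\seq{\xi}$, where the last inequality uses the hypothesis $\abs{\xi}\le s$. The terms containing a factor $r_j$ are controlled by \eqref{4.9}: products such as $e^{i\varphi_1}r_2$ are $O(s^{-1})$ in $L^1(D)$, so after multiplication by $2s$ they contribute $O(1)$, and similarly $\norm{r_1r_2}_{L^1(D)}=O(s^{-2})$. Finally $iA\cdot(g_2\nabla g_1-g_1\nabla g_2)$ is $O(1)$ in $s$ by \eqref{4.9} and the boundedness of $\nabla\varphi_j$. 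All of these fall under $C\seq{\xi}$.

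The main obstacle is the residual phase: after these reductions the genuine leading contribution is $2s\int_D\overline{\omega}\cdot A\,e^{i(\varphi_1+\varphi_2)}e^{ix\cdot\xi}\,dx$, and I must replace the factor $e^{i(\varphi_1+\varphi_2)}$ by $1$ at the cost of $O(\seq{\xi})$. Here I would exploit the transport equations $\omega_1^*\cdot\nabla\varphi_1=\omega_1^*\cdot A_1$ and $\omega_2^*\cdot\nabla\varphi_2=-\omega_2^*\cdot A_2$. Since $\omega_1^*(s)\to-\overline{\omega}$ and $\omega_2^*(s)\to\overline{\omega}$ with $\abs{\omega_1^*+\overline{\omega}}+\abs{\omega_2^*-\overline{\omega}}\le C\seq{\xi}/s$, adding the two equations yields $\overline{\omega}\cdot\nabla\psi=-\overline{\omega}\cdot A+E$, where $\psi:=\varphi_1+\varphi_2$ and $\abs{E}\le C\seq{\xi}/s$ (using the boundedness of $A_j$ and $\nabla\varphi_j$). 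Setting $W:=ie^{i\psi}+\psi$, the identity $\nabla(e^{i\psi})=ie^{i\psi}\nabla\psi$ gives $\overline{\omega}\cdot\nabla W=(e^{i\psi}-1)\overline{\omega}\cdot A+(1-e^{i\psi})E$, hence $\overline{\omega}\cdot A(e^{i\psi}-1)=\overline{\omega}\cdot\nabla W+(e^{i\psi}-1)E$. The crucial point is the orthogonality $\overline{\omega}\cdot\xi=0$ (because $\xi\perp\omega_1,\omega_2$), which gives $\overline{\omega}\cdot\nabla(e^{ix\cdot\xi})=0$. Since $\overline{\omega}\cdot A$ is supported in $D$ I may integrate over $\R^3$ and split the result into two absolutely convergent integrals (both integrands decaying like $\abs{x}^{-3}$, the decay of $\psi$ coming from the mapping properties of $N_\omega^{-1}$ recorded in the appendix); then
$\int_{\R^3}\overline{\omega}\cdot\nabla W\,e^{ix\cdot\xi}dx=\int_{\R^3}\textrm{div}\para{\overline{\omega}\,W e^{ix\cdot\xi}}dx$, and the divergence theorem combined with $\overline{\omega}\cdot\xi=0$ forces this to vanish. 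There remains $2s\int_{\R^3}(e^{i\psi}-1)E\,e^{ix\cdot\xi}dx$, which is $O(\seq{\xi})$ because $2s\abs{E}\le C\seq{\xi}$ and $e^{i\psi}-1$ is bounded.

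Collecting the leading term with all the $O(\seq{\xi})$ contributions then gives \eqref{4.11} with $\abs{\mathcal{R}(\xi,s)}\le C\seq{\xi}$. The delicate step, and the one I would write out in full, is the integration by parts removing the phase, since it is the only place where the geometric constraints $\rho_j\cdot\rho_j=0$ and $\xi\perp\omega_1,\omega_2$ enter essentially; everything else is bookkeeping with the CGO estimate \eqref{4.9}.
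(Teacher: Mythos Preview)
Your bookkeeping is fine: the reduction to the leading term $2s\int_D\overline{\omega}\cdot A\,e^{i(\varphi_1+\varphi_2)}e^{ix\cdot\xi}dx$, the treatment of the $r_j$ terms via \eqref{4.9}, and the replacement of $\beta$ by $1$ all match the paper. The divergence comes at the phase-removal step, where the paper and you part ways. The paper does \emph{not} try to integrate by parts with the actual phases $\varphi_1+\varphi_2$; instead it introduces auxiliary phases $\psi_1=N_{\overline{\omega}}^{-1}(\overline{\omega}\cdot A_1)$, $\psi_2=N_{\overline{\omega}}^{-1}(-\overline{\omega}\cdot A_2)$ built from the \emph{limiting} direction $\overline{\omega}$, so that $\psi_1+\psi_2=N_{\overline{\omega}}^{-1}(-\overline{\omega}\cdot A)$ exactly. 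Then Lemma~\ref{LA.1} gives the exact identity $\int\overline{\omega}\cdot A\,e^{i(\psi_1+\psi_2)}e^{ix\cdot\xi}=\int\overline{\omega}\cdot A\,e^{ix\cdot\xi}$, and Lemma~\ref{LA.3} (continuity of $N_\theta^{-1}$ in $\theta$) controls $|e^{i(\varphi_1+\varphi_2)}-e^{i(\psi_1+\psi_2)}|$ by $|\omega_1^*+\overline{\omega}|+|\omega_2^*-\overline{\omega}|=O(\seq{\xi}/s)$.

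Your direct route has a genuine gap at the divergence-theorem step. First, the appendix records only $W^{n,\infty}$ bounds for $N_\omega^{-1}$ (Lemma~\ref{LA.2}); it contains \emph{no} decay statement, so the claimed $|x|^{-3}$ behaviour of the integrands has no justification from the paper. Second, and more seriously, $W=ie^{i\psi}+\psi\to i\neq 0$ as $\psi\to 0$, so the vector field $\overline{\omega}\,We^{ix\cdot\xi}$ does not decay and the flux at infinity in $\int_{\R^3}\mathrm{div}(\overline{\omega}\,We^{ix\cdot\xi})dx$ need not vanish. One can rescue this by noticing that $W-i=i(e^{i\psi}-1)+\psi=O(\psi^2)$ and replacing $W$ by $W-i$ (which has the same gradient), but you then still need genuine pointwise decay of $\psi$, and since $\varphi_1,\varphi_2$ are defined via the \emph{different} directions $\omega_1^*,\omega_2^*$ they are supported in different slabs and decay in different planar variables, making the analysis considerably more delicate than your sentence suggests. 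The paper's two-step approach (exact identity for the clean direction, then continuity in the direction) sidesteps all of this.
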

This Lemma is a straightforward extension and adaptation of a similar lemma in \cite{texbook4} and, for shake of completeness, we provide the proof in the Appendix.
\smallskip

 In what follows, for $A_1$ and $A_2\in W^{2,\infty}(\R^3)$ as above, we introduce the notation 
\begin{equation}\label{4.12}
a_j(x)=(A_2-A_1)(x)\cdot e_j= A(x)\cdot e_j,\quad j=1,2,3,\quad x\in\R^3,
\end{equation}
where $(e_1,e_2,e_3)$ is the canonical basis of $\R^3$ and set for $j,\ell = 1,2,3$,
\begin{equation}\label{4.13}
b_{j\ell}(x):=\frac{\p a_\ell}{\p x_j}(x)-\frac{\p a_j}{\p x_\ell}(x),\quad x\in\R^3,
\end{equation}
 the components of $\textrm{curl}\, A$ and
$$
\hat{b}_{j\ell}(\xi) = \int_{\R^3} e^{ix\cdot \xi} b_{j\ell}(x) dx,
$$ 
 the associated Fourier coefficients.  We then have following estimate for the Fourier transform of the difference of the magnetic fields.
\begin{lemma}\label{L4.3}
For any $s\geq s_0$ and $\xi \in \R^3$ such that $\vert \xi \vert \leq s$ the following estimate holds true,
\begin{equation}\label{4.14}
|\hat{b}_{j\ell}(\xi)| \leq C \seq{\xi}\left( e^{\Lambda s}\data+s^{-1}\seq{\xi}\right),
\end{equation}
for $j,\ell = 1,2,3$, where $C$ and $\Lambda$ are positive constants independent of $s$, $\xi$ and $M$.
\end{lemma}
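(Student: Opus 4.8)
The plan is to combine the integral identity of Lemma~\ref{L4.2} with the fundamental integral inequality \eqref{3.10} of Lemma~\ref{L3.3}, applied to the CGO solutions $u_1,u_2$ constructed in \eqref{4.8}. First I would rewrite the left-hand side of \eqref{3.10} using Lemma~\ref{L4.2}: the magnetic term $i\int_D A\cdot(u_2\nabla u_1-u_1\nabla u_2)\,dx$ equals $2s\int_D \overline\omega\cdot A(x)e^{ix\cdot\xi}\,dx+\mathcal R(\xi,s)$ with $|\mathcal R(\xi,s)|\le C\seq\xi$. The key point is that the leading term contains the quantity $\int_D \overline\omega\cdot A\,e^{ix\cdot\xi}\,dx$, whose antisymmetric structure in the directions $\omega_1,\omega_2$ encodes exactly a component of $\widehat{\mathrm{curl}\,A}$; with the orthogonality $\xi\cdot\omega_1=\xi\cdot\omega_2=0$ built into \eqref{4.6}--\eqref{4.7}, a short computation identifies $2s\,\overline\omega\cdot\widehat A(\xi)$ (after isolating the $s$-independent piece) with a linear combination of the $\hat b_{j\ell}(\xi)$ for the appropriate pair $(j,\ell)$ determined by the choice of orthonormal frame $(\omega_1,\omega_2,\xi/|\xi|)$.

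Next I would control the remaining terms. The zeroth-order term $\int_B(\abs{A_2}^2-\abs{A_1}^2+q)u_1u_2\,dx$ on the left-hand side of \eqref{3.10} is bounded by $C\norm{u_1u_2}_{L^1(B)}\le C$ using \eqref{4.16}, so it contributes only an $O(1)\le C\seq\xi$ error. On the right-hand side of \eqref{3.10} I would insert the norm bounds $\norm{u_j}_{H^2(B)}\le Cs^2e^{\Lambda s}$ from \eqref{4.16}, giving a bound $C\,s^4e^{2\Lambda s}\,\data$; absorbing the polynomial factor $s^4$ into the exponential by enlarging $\Lambda$ (which is harmless since $s\ge s_0$), this becomes $Ce^{\Lambda s}\data$ after relabeling $\Lambda$. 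Collecting everything, the identity reads, schematically,
\begin{equation*}
2s\,\overline\omega\cdot\widehat A(\xi)=\mathcal R(\xi,s)+O(\norm{u_1u_2}_{L^1})+\big(\text{RHS of }\eqref{3.10}\big),
\end{equation*}
so that
\begin{equation*}
\big|2s\,\overline\omega\cdot\widehat A(\xi)\big|\le C\seq\xi+Ce^{\Lambda s}\data.
\end{equation*}

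Finally I would divide by $2s$ and translate back to $\hat b_{j\ell}$. Since $\hat b_{j\ell}(\xi)$ is, up to the frame-dependent linear combination, equal to $2s\,\overline\omega\cdot\widehat A(\xi)$ divided by the appropriate $s$-factor, one obtains
\begin{equation*}
|\hat b_{j\ell}(\xi)|\le C\Big(e^{\Lambda s}\data+s^{-1}\seq\xi\Big),
\end{equation*}
and multiplying through by the harmless factor $\seq\xi$ coming from the precise frame computation yields \eqref{4.14}. The main obstacle I anticipate is the bookkeeping in the second paragraph's identification step: one must verify that the antisymmetrized exponential integral $\int_D \overline\omega\cdot A\,e^{ix\cdot\xi}\,dx$, together with the factor $2s$ and the choice \eqref{4.6}--\eqref{4.7} of $\rho_1,\rho_2$, produces precisely $\hat b_{j\ell}$ (and not merely $\widehat{a_j}$) — this is where the gauge-invariance of the recoverable quantity manifests, and it requires carefully using $\xi\perp\omega_1,\omega_2$ so that the gradient part of $A$ drops out and only the curl survives. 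The dependence of $C$ and $\Lambda$ on $s,\xi,M$ must be tracked to confirm they are independent of these parameters, as claimed.
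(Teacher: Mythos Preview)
Your overall strategy matches the paper's: combine Lemma~\ref{L4.2} with \eqref{3.10}, bound the zeroth-order term using $\|u_1u_2\|_{L^1(B)}\le C$ from \eqref{4.16}, and absorb the polynomial $s$-factors into $e^{\Lambda s}$. Up to the displayed inequality $|2s\,\overline\omega\cdot\widehat A(\xi)|\le C\seq\xi+Ce^{\Lambda s}\data$ your argument is correct and essentially identical to the paper's derivation of \eqref{4.17}.

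The gap is precisely at the step you flag as the ``main obstacle,'' and it is not mere bookkeeping. From \eqref{4.17} you have only a bound on $|\overline\omega\cdot\widehat A(\xi)|=|(\omega_1-i\omega_2)\cdot\widehat A(\xi)|$; since $\widehat A(\xi)$ is complex-valued you cannot isolate either component from this alone, and the orthogonality $\xi\perp\omega_1,\omega_2$ by itself does not single out $\hat b_{j\ell}$. The paper supplies two concrete moves you are missing. First, one repeats the whole CGO construction with $\omega_1$ replaced by $-\omega_1$ (the vectors in \eqref{4.6}--\eqref{4.7} still satisfy $\rho_j\cdot\rho_j=0$), which yields the companion bound $|\omega\cdot\widehat A(\xi)|\le C(e^{\Lambda s}\data+s^{-1}\seq\xi)$; the triangle inequality then eliminates $\omega_1$ and leaves a bound on $|\omega_2\cdot\widehat A(\xi)|$ alone. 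Second, one makes the specific choice $\omega_2=(\xi_j e_\ell-\xi_\ell e_j)/|\xi_j e_\ell-\xi_\ell e_j|$ (legitimate since this vector is orthogonal to $\xi$), so that $|\xi_j e_\ell-\xi_\ell e_j|\cdot|\omega_2\cdot\widehat A(\xi)|=|\xi_j\hat a_\ell(\xi)-\xi_\ell\hat a_j(\xi)|=|\hat b_{j\ell}(\xi)|$. The factor $|\xi_j e_\ell-\xi_\ell e_j|\le\seq\xi$ is thus what produces the outer $\seq\xi$ in \eqref{4.14}; it is not a harmless multiplier tacked on at the end but the conversion factor from $\omega_2\cdot\widehat A$ to $\hat b_{j\ell}$. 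In particular your penultimate displayed inequality, which already bounds $|\hat b_{j\ell}(\xi)|$ \emph{before} multiplying by $\seq\xi$, cannot be correct as written---that bound holds for $|\omega_2\cdot\widehat A(\xi)|$, not for $|\hat b_{j\ell}(\xi)|$. Finally one extends from $\xi_j e_\ell-\xi_\ell e_j\neq 0$ to all $|\xi|\le s$ by continuity of both sides.
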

\begin{proof}
Let $\xi\in\R^3$ such that  $\vert \xi \vert \leq s$. Let $\omega=\omega_1+i\omega_2$, where $\omega_j$, $j=1,2$ are as above and  consider $u_j$, $j=1,2$ the solutions given by (\ref{4.8}). By using (\ref{3.10}) and (\ref{4.11}), we get for $\abs{\xi}\leq s$
\begin{multline}\label{4.15}
2s\big|\int_{B} \overline{\omega}\cdot  A e^{ix\cdot\xi}dx\big|\cr
\leq C \Big(\data\norm{u_1}_{H^2(B)} \norm{u_2}_{H^2(B)}
+\norm{u_1u_2}_{L^1(B)}+ \seq{\xi}\Big).
\end{multline}
Then, we obtain by (\ref{4.16})
\begin{equation}\label{4.17}
\big| \int_{B} \overline{\omega}\cdot  A(x) e^{ix\cdot\xi}dx\big|\leq C \para{ e^{\Lambda s}\data+s^{-1}\seq{\xi}}.
\end{equation}
The reasoning above remains valid if we change $\omega_1$ by  $-\omega_1$ and therefore we also have 
\begin{equation}\label{4.18}
\big| \int_{B} -\omega\cdot A(x) e^{ix\cdot\xi}dx\big|\leq C \para{ e^{\Lambda s}\data+s^{-1}\seq{\xi}}.
\end{equation}
Outside the plane ${\xi_je_k-\xi_\ell e_j} =0$, we choose $\omega_2=\frac{\xi_je_\ell-\xi_\ell e_j}{\abs{\xi_je_\ell-\xi_\ell e_j}}$ which is indeed an orthogonal unitary direction to $\xi$. Multiplying both sides of  (\ref{4.17}) and (\ref{4.18}) by $\abs{\xi_je_\ell-\xi_\ell e_j}$,  adding them together  and using the triangular inequality to eliminate $\omega_1$ imply 
\begin{equation}\label{4.19}
\big|\int_{\R^3} e^{ix\cdot\xi}(\xi_j a_\ell(x)-\xi_\ell a_j(x))dx\big|\leq C \seq{\xi}\para{ e^{\Lambda s}\data+s^{-1}\seq{\xi}},
\end{equation}
for ${\xi_je_\ell-\xi_\ell e_j} \neq0$. The inequality extends to  all $|\xi|\le s$ by regularity of both sides in terms of $\xi$ and proves \eqref{4.14}.\\
This end the proof.
\end{proof}

We are now in position to prove the main stability result for the magnetic potential from the near field data using  (\ref{4.14}) and regularity assumptions to estimate Fourier coefficients for large $|\xi|$. More precisely, we assume now that for $j=1,2$, $A_j\in W^{2,\infty}(D)$ with $\norm{A}_{W^{2,\infty}(D)}\leq M$,  and 
\begin{equation}\label{assumpAj}
\int_{\R^3} \seq{\xi}^\sigma| \widehat{\mathrm{curl} A_j}(\xi)| \, d\xi < M
\end{equation}
for some $\sigma>0$, where $ \widehat{\mathrm{curl} A_j}$ denotes the Fourier transform of $\mathrm{curl} A_j$.
\subsubsection*{End of the proof of the stability for the magnetic field}
We derive now the stability estimate for the magnetic fields in $L^\infty$-norm. Let $s_0>1$ be as in Lemma \ref{L4.3} and $s$ and $R$ be two parameters satisfying $s \ge R \ge s_0$.
From (\ref{4.14}) we get 
\begin{align*}
\int_{\R^3} | \hat{b}_{j\ell}(\xi)| \, d\xi &= \int_{\seq{\xi}\leq R}| \hat{b}_{j\ell}(\xi)| \,d\xi+\int_{\seq{\xi}\geq R}| \hat{b}_{j\ell}(\xi)| \, d\xi\\
&\leq C R^2 \left( e^{\Lambda s}\data+s^{-1} R\right) + 2M R^{-\sigma}.
\end{align*}
Choosing $R= s^{1/(\sigma+3)}$ we deduce that for $s_0$ sufficiently large (depending only on $B$, $k$, $M$ and $\sigma$),
\begin{equation}\label{4.21}
\norm{b_{j\ell}}_{L^\infty(\R^3)}\leq C'\big(e^{\Lambda' s}\data+s^{-\sigma/{\sigma+3}} \big),
\end{equation}
for some positive constants $C'$ and $\Lambda'$ and all $s \ge s_0$.  Now if $\data\leq\varepsilon_0$, for some $\varepsilon_0>0$, such that $-\log(\epsilon_0)\ge 2 \Lambda' s_0$, then taking $s = \frac{-1}{2 \Lambda'}\log(\data)$ in \eqref{4.21} implies 
\begin{equation}\label{4.22}
\norm{b_{j\ell}}_{L^\infty(\R^3)} \leq C' \big(\data^{1/2} + \big(\frac{-1}{2 \Lambda'}\log(\data)\big)^{-\sigma/{\sigma+3}} \big).
\end{equation}
We also observe that this type of inequality holds true if $\data\geq\varepsilon_0$ since in that case  we can simply write
\begin{equation}
\norm{b_{j\ell}}_{L^\infty(\R^3)} \leq M \le (M/\sqrt{\epsilon_0}) \data^{1/2}.
\end{equation}
The proof of the first estimate of Theorem \ref{T1.1} is then completed.
\smallskip

Using the above result, we are able to prove the second main result related to the stability for  the electric potential.
\subsection{Stability estimate for the electric potential}
In this section, we are going to use the complex geometric optics solutions and the stability estimate we already obtained for the {magnetic} field in order to retrieve a stability result for the electric potential. There are, however, some difficulties with this. In fact, in order to isolate the integral of the difference $q=q_2-q_1$ we would need to control the norm of the difference $A=A_2-A_1$. Unfortunately, we can only estimate the difference of the magnetic fields $\mathrm{curl}(A)$. To overcome this difficulty we will use the Helmholtz decomposition and write $A=H-\nabla\vartheta$, with the fact that $\mathrm{div}\,H=0$ and we are able to estimate the norm of $\nabla\vartheta$.
\smallskip


\begin{lemma}\label{L.AD1}
Let  $p>3$. There exist $\vartheta\in W^{3,p}(B)\cap H^1_0(B)$ and a positive constant $C$ such that
\begin{equation}\label{AD2}
\norm{\vartheta}_{W^{3,p}(B)}\leq C\norm{A}_{W^{2,p}(D)},
\end{equation}
and 
\begin{equation}\label{AD3}
\norm{A+\nabla\vartheta}_{W^{1,p}(B)}\leq C\norm{\mathrm{curl}(A)}_{L^p(D)}.
\end{equation}
Moreover, if $B'$ is a ball containing $\overline{D}$ and such that $\overline{B'}\subset B$, then
\begin{equation}\label{AD4}
\norm{\vartheta}_{W^{2,p}(B \backslash B')}\leq C\norm{\mathrm{curl}(A)}_{L^p(D)}.
\end{equation}
\end{lemma}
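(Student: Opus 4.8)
The plan is to realize the Helmholtz decomposition $A = H - \nabla\vartheta$ announced before the lemma by constructing $\vartheta$ through an auxiliary Poisson problem, and then to control $H := A + \nabla\vartheta$ via a div--curl estimate on the ball. First I would define $\vartheta$ as the unique solution of the Dirichlet problem
\begin{equation*}
\Delta\vartheta = -\,\mathrm{div}\, A \quad\text{in } B, \qquad \vartheta = 0 \quad\text{on } \partial B.
\end{equation*}
Since $A\in W^{2,\infty}(\R^3)$ with $\mathrm{Supp}(A)\subset D$, the right-hand side $\mathrm{div}\, A$ belongs to $W^{1,p}(B)$ and is supported in $D$; as $0$ is not a Dirichlet eigenvalue of $-\Delta$ on the smooth domain $B$, the $L^p$ elliptic regularity theory for the Dirichlet Laplacian yields $\vartheta\in W^{3,p}(B)\cap H^1_0(B)$ together with
\begin{equation*}
\norm{\vartheta}_{W^{3,p}(B)} \le C\,\norm{\mathrm{div}\, A}_{W^{1,p}(B)} \le C\,\norm{A}_{W^{2,p}(D)},
\end{equation*}
which is exactly \eqref{AD2} (the passage from $W^{2,\infty}$ to $W^{2,p}$ being harmless on the bounded set $D$).

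Next I would set $H := A + \nabla\vartheta$. By the choice of $\vartheta$ one has $\mathrm{div}\, H = \mathrm{div}\, A + \Delta\vartheta = 0$ in $B$, while $\mathrm{curl}\, H = \mathrm{curl}\, A$ since $\mathrm{curl}\,\nabla\vartheta = 0$. For the boundary condition I would observe that $A$ vanishes near $\partial B$ (its support lies in $D$ with $\overline D\subset B$), and that $\vartheta = 0$ on $\partial B$ forces the tangential gradient of $\vartheta$ to vanish there, so $\nu\times\nabla\vartheta = 0$ on $\partial B$; hence $\nu\times H = 0$ on $\partial B$. Thus $H$ is a divergence-free field with prescribed curl and vanishing tangential trace on the ball $B$. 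Invoking the $L^p$ theory of vector potentials / div--curl systems and using that $B$ is simply connected with connected boundary, so that the associated space of harmonic fields is trivial, I obtain the Poincaré--Friedrichs-type bound
\begin{equation*}
\norm{H}_{W^{1,p}(B)} \le C\big(\norm{\mathrm{div}\, H}_{L^p(B)} + \norm{\mathrm{curl}\, H}_{L^p(B)}\big) = C\,\norm{\mathrm{curl}\, A}_{L^p(B)} = C\,\norm{\mathrm{curl}\, A}_{L^p(D)},
\end{equation*}
the last equality because $\mathrm{curl}\, A$ is supported in $D$. This is \eqref{AD3}.

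Finally, for \eqref{AD4} I would localize to the annular region $B\setminus B'$. Since $\mathrm{Supp}(A)\subset D\subset B'$, we have $A\equiv 0$ there, so $\nabla\vartheta = H$ on $B\setminus B'$; restricting \eqref{AD3} gives $\norm{\nabla\vartheta}_{W^{1,p}(B\setminus B')} = \norm{H}_{W^{1,p}(B\setminus B')}\le C\,\norm{\mathrm{curl}\, A}_{L^p(D)}$, which controls the first- and second-order derivatives of $\vartheta$ on the annulus. To control $\vartheta$ itself I would apply the Poincaré inequality on $B\setminus B'$, legitimate because $\vartheta$ vanishes on the positive-measure portion $\partial B$ of the boundary of this region, giving $\norm{\vartheta}_{L^p(B\setminus B')}\le C\,\norm{\nabla\vartheta}_{L^p(B\setminus B')}$. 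Summing the three contributions yields \eqref{AD4}.

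I expect the main obstacle to be the justification of \eqref{AD3}: one must set up the vector-potential (div--curl) regularity \emph{up to the boundary} with the tangential condition $\nu\times H = 0$ in the $L^p$ framework, and then eliminate the lower-order term $\norm{H}_{L^p}$ by appealing to the triviality of the relevant cohomology on the ball. The construction of $\vartheta$ and the final localization are comparatively routine elliptic and Poincaré estimates.
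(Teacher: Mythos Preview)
Your proof is correct and follows essentially the same route as the paper: the same Dirichlet Poisson problem for $\vartheta$, the same elliptic regularity for \eqref{AD2}, and the same div--curl estimate on $H=A+\nabla\vartheta$ with the tangential boundary condition $\nu\times H=0$ for \eqref{AD3}.

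The only difference lies in \eqref{AD4}. The paper introduces a cutoff $\chi_0\in C^\infty_0(B)$ with $\chi_0=1$ on $B\setminus B'$ and $\chi_0=0$ on $\overline D$, sets $\vartheta_0=\chi_0\vartheta$, and applies $W^{2,p}$ elliptic regularity to the Dirichlet problem $-\Delta\vartheta_0=[\Delta,\chi_0]\vartheta$ on $B$; since the commutator is supported in $B'\setminus D$ where $A=0$ and hence $\nabla\vartheta=H$, this again reduces to \eqref{AD3}. Your argument---simply restricting \eqref{AD3} to $B\setminus B'$ (where $\nabla\vartheta=H$) to control the first and second derivatives, and then invoking Poincar\'e on $B\setminus B'$ with $\vartheta|_{\partial B}=0$ to control $\vartheta$ itself---is more direct and avoids the second elliptic problem altogether, at the modest cost of checking that Poincar\'e applies on the annular region. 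Both approaches are valid; yours is a welcome simplification.
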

\begin{proof}
Let $\vartheta$ solves the following elliptic boundary value problem in the ball $B$
\begin{equation}\label{AD5}
\left\{\begin{array}{lll}
-\Delta\vartheta  &= \mathrm{div}(A) & \textrm{in}\,B,\cr
\vartheta &=  0 & \textrm{on}\,\p B.
\end{array}
\right.
\end{equation}
Since the source term $\mathrm{div}(A)$ belongs to $W^{1,p}(B)$ by the elliptic regularity (see \cite[Theorem 2.5.1.1 in Chapter 2]{Grisv}), we have $\vartheta\in W^{3,p}(B)\cap H^1_0(B)$. Moreover, there exist $C>0$ such that
\begin{equation}\label{AD6}
\norm{\vartheta}_{W^{3,p}(B)}\leq C\norm{\mathrm{div}(A)}_{W^{1,p}(D)}\leq  C\norm{A}_{W^{2,p}(D)},
\end{equation}
this ends the prove of (\ref{AD2}). To prove (\ref{AD3}), we consider the vector field $H\in W^{2,p}(B)$ defined by
\begin{equation}\label{AD7}
H=A+\nabla\vartheta.
\end{equation}
By (\ref{AD5}) and (\ref{AD7}), $H$ satisfies 
\begin{equation}\label{AD8}
\mathrm{div}(H)=0,\quad \mathrm{curl}(H)=\mathrm{curl}(A)\quad\textrm{in}\,B,\quad \textrm{and}\quad H\wedge\nu=0\quad\textrm{on}\,\p B.
\end{equation}
By the $L^p$-$\mathrm{div}$-$\mathrm{curl}$ estimate, we get
\begin{equation}\label{AD9}
\norm{H}_{W^{1,p}(B)}\leq C\norm{\mathrm{curl}(H)}_{L^p(B)},
\end{equation}
and we conclude (\ref{AD3}). Finally, to prove (\ref{AD4}) we consider a cut-off function $\chi_0\in C^\infty(\R^n)$ such that $\chi_0=1$ in $B\backslash B'$ and $\chi_0=0$ in $\overline{D}$. Set $\vartheta_0=\chi_0\vartheta$, we have by (\ref{AD5})
\begin{equation}\label{AD10}
\left\{\begin{array}{lll}
-\Delta\vartheta_0=[\Delta,\chi_0]\vartheta & \textrm{in}\, B,\cr
\vartheta_0=0 & \textrm{on}\,\p B,
\end{array}
\right.
\end{equation}
where we have used $A=0$ outside $D$. Thus, we obtain
\begin{equation}\label{AD11}
\norm{\vartheta_0}_{W^{2,p}(B)}\leq C\norm{\nabla\vartheta}_{L^p(B\backslash D)},
\end{equation}
since the first order operator $ [\Delta,\chi_0]$ is supported in $B'\backslash D$. Then we deduce that
\begin{equation}
\norm{\vartheta_0}_{W^{2,p}(B)}\leq C\norm{A+\nabla\vartheta}_{L^p(B\backslash D)}\leq C\norm{\mathrm{curl}(A)}_{L^p(D)}.
\end{equation}
This ends the proof.
\end{proof}
Applying now Morrey's inequality, we obtain for some positive constant $C$
\begin{equation}\label{AD12}
\norm{A+\nabla\vartheta}_{L^\infty(B)}\leq C\norm{\mathrm{curl}(A)}_{L^\infty(D)},
\end{equation}
and
\begin{equation}\label{AD13}
\norm{\vartheta}_{W^{1,\infty}(B \backslash B')}\leq C\norm{\mathrm{curl}(A)}_{L^\infty(D)}.
\end{equation}
We prove now the following inequality which is a slight modification of the previous estimate (\ref{3.10}) given in Lemma \ref{L3.3}. This inequality is based on the invariance of the near field operator under gauge transformation for the magnetic potential that was explained in the introduction.
\begin{lemma}\label{L.AD14}
Let $B$ denote an open ball containing $\overline{D}$ and let $\varphi\in W^{2,\infty}(B)$ with $\mathrm{Supp}(\varphi)\subset B$. Then there exist a constant $C$ that only depends on $B$ and such that
\begin{multline}\label{AD15}
\abs{\int_B e^{-i\varphi}\big[ i(A+\nabla\varphi)\cdot (u_1\nabla u_2 -u_2\nabla u_1)-((A+\nabla\varphi)\cdot(A_1+A_2)+q)u_1u_2\big] dx}\\
\leq C\norm{\mathcal{N}_{A_1,q_1}-\mathcal{N}_{A_2,q_2}}\norm{u_1}_{H^2(B)}\norm{u_2}_{H^2(B)},
\end{multline}
for all $u_1\in H^2(B)$ satisfying $\mathcal{H}_{-A_1,q_1}u_1=k^2u_1$ in $B$ and all $u_2\in H^2(B)$ satisfying $\mathcal{H}_{A_2,q_2}u_2=k^2u_2$ in the ball $B$.
\end{lemma}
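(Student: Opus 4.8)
The plan is to deduce \eqref{AD15} directly from the integral inequality \eqref{3.10} of Lemma \ref{L3.3} by exploiting the gauge transformation \eqref{1.12}. Since $\mathrm{Supp}(\varphi)\subset B$, I extend $\varphi$ by zero to all of $\R^3$ and set $\tilde{u}_2:=e^{-i\varphi}u_2$. By the gauge identity \eqref{1.12}, whenever $u_2$ satisfies $\mathcal{H}_{A_2,q_2}u_2=k^2u_2$ in $B$, the function $\tilde{u}_2$ satisfies $\mathcal{H}_{A_2+\nabla\varphi,q_2}\tilde{u}_2=e^{-i\varphi}\mathcal{H}_{A_2,q_2}u_2=k^2\tilde{u}_2$ in $B$, and $\tilde{u}_2\in H^2(B)$. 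Thus the pair $(u_1,\tilde{u}_2)$ is admissible in Lemma \ref{L3.3} when it is applied to the two pairs of potentials $(A_1,q_1)$ and $(A_2+\nabla\varphi,q_2)$; for these the difference of magnetic potentials is $A+\nabla\varphi$ while the difference of electric potentials remains $q=q_2-q_1$.

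First I would write the left-hand side of \eqref{3.10} for these data, namely the modulus of
\[
\int_B\cro{i(A+\nabla\varphi)\cdot(u_1\nabla\tilde{u}_2-\tilde{u}_2\nabla u_1)-\para{\abs{A_2+\nabla\varphi}^2-\abs{A_1}^2+q}u_1\tilde{u}_2}\,dx,
\]
and substitute $\tilde{u}_2=e^{-i\varphi}u_2$ together with $\nabla\tilde{u}_2=e^{-i\varphi}(\nabla u_2-iu_2\nabla\varphi)$. Factoring $e^{-i\varphi}$ out of the whole integrand, the first-order term yields $i(A+\nabla\varphi)\cdot(u_1\nabla u_2-u_2\nabla u_1)$ plus an extra scalar contribution $(A+\nabla\varphi)\cdot\nabla\varphi\,u_1u_2$ arising from the $-iu_2\nabla\varphi$ piece (the factor $i\cdot(-i)=1$).

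The crucial step is then the purely algebraic identity for the scalar coefficient of $u_1u_2$. Using $A=A_2-A_1$ and $(A_2-A_1)\cdot(A_1+A_2)=\abs{A_2}^2-\abs{A_1}^2$, one checks that
\[
(A+\nabla\varphi)\cdot\nabla\varphi-\para{\abs{A_2+\nabla\varphi}^2-\abs{A_1}^2}=-(A+\nabla\varphi)\cdot(A_1+A_2).
\]
Hence the integrand above equals exactly $e^{-i\varphi}$ times the bracketed integrand of \eqref{AD15}, so the two left-hand sides coincide. I expect this bookkeeping to be the only genuinely delicate part of the argument.

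Finally I would treat the right-hand side. The gauge invariance of the near field operator explained in the introduction (valid precisely because $\varphi=0$ outside $B$) gives $\mathcal{N}_{A_2+\nabla\varphi,q_2}=\mathcal{N}_{A_2,q_2}$, so the measurement term produced by \eqref{3.10} is unchanged and equals $\data$. Moreover, since $\varphi\in W^{2,\infty}(B)$, multiplication by $e^{-i\varphi}$ is bounded on $H^2(B)$, whence $\norm{\tilde{u}_2}_{H^2(B)}\leq C\norm{u_2}_{H^2(B)}$ with $C$ depending only on $\norm{\varphi}_{W^{2,\infty}(B)}$. Combining this with \eqref{3.10} and absorbing the harmless constants yields \eqref{AD15}.
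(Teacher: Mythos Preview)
Your argument is correct and follows the same strategy as the paper: apply the gauge transformation \eqref{1.12} to reduce \eqref{AD15} to the already proved estimate \eqref{3.10}, together with the gauge invariance of the near field operator. The only difference is cosmetic: you use the \emph{asymmetric} gauge $\tilde u_2=e^{-i\varphi}u_2$, $\tilde A_2=A_2+\nabla\varphi$ (leaving $u_1,A_1$ untouched), whereas the paper uses the \emph{symmetric} gauge $\tilde u_j=e^{-i\varphi/2}u_j$, $\tilde A_1=A_1-\tfrac12\nabla\varphi$, $\tilde A_2=A_2+\tfrac12\nabla\varphi$. In the symmetric version the extra $\nabla\varphi$ contributions in $\tilde u_1\nabla\tilde u_2-\tilde u_2\nabla\tilde u_1$ cancel directly and $|\tilde A_2|^2-|\tilde A_1|^2=(A+\nabla\varphi)\cdot(A_1+A_2)$ falls out immediately, so the bookkeeping is shorter; in your version one must carry the extra term $(A+\nabla\varphi)\cdot\nabla\varphi\,u_1u_2$ and then invoke the algebraic identity you wrote, which you verified correctly. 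Either route yields exactly the integrand of \eqref{AD15}. Your observation that the constant in fact depends on $\norm{\varphi}_{W^{2,\infty}(B)}$ (through $\norm{\tilde u_2}_{H^2}\le C(\varphi)\norm{u_2}_{H^2}$) is accurate; the paper's proof has the same implicit dependence.
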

\begin{proof}
Define $\tilde{A}_1=A_1-\frac{1}{2}\nabla\varphi$ and $\tilde{A}_2=A_2+\frac{1}{2}\nabla\varphi$. Consider $u_1$, $u_2\in H^2(B)$ as in the lemma. Let denote $\tilde{u}_j=e^{-i\varphi/2}u_j$, then by (\ref{1.12}) we get
\begin{align}\label{AD16}
\mathcal{H}_{-\tilde{A}_1,q_1}\tilde{u}_1 &=e^{-i\varphi/2}\mathcal{H}_{-A_1,q_1}u_1=k^2\tilde{u}_1,\cr
 \mathcal{H}_{\tilde{A}_2,q_2}\tilde{u}_2 &=e^{-i\varphi/2}\mathcal{H}_{A_2,q_2}u_1=k^2\tilde{u}_2,\quad \textrm{in}\,B.
\end{align}
Moreover, due the gauge invariance of the scattered field and since $\varphi_{|\p B}=0$, we have from (\ref{3.1}) that
\begin{equation}\label{AD17}
\mathcal{N}_{\tilde{A}_j,q_j}=\mathcal{N}_{A_j,q_j},\quad j=1,2.
\end{equation}
With these solution at hand, we use the gauge invariance (\ref{AD17}) and (\ref{3.10}) with $\tilde{A}_j$ and $\tilde{u}_j$ instead of $A_j$ and $u_j$, respectively, to obtain
\begin{multline}\label{AD18}
\left| \int_{B}  [i\tilde{A}\cdot (\tilde{u}_1 \nabla \tilde{u}_2-\tilde{u}_2\nabla \tilde{u}_1) -(|\tilde{A}_2|^2-|\tilde{A}_1|^2+q)\tilde{u}_1\tilde{u}_2]dx \right|\cr
\leq  C \norm{ \mathcal{N}_{A_1,q_1}-\mathcal{N}_{A_2,q_2}}\norm{\tilde{u}_1}_{H^2(B)} \norm{\tilde{u}_2}_{H^2(B)}.
\end{multline}
From the identity (\ref{AD18}), we get (\ref{AD15}). This ends the proof.
\end{proof}
Lemma \ref{L.AD14}, allows us then to obtain an estimate to $A$ added with a gradient term. By adding $\nabla\vartheta$, we would thus get an estimate with controlled terms. Unfortunately, we cannot directly add $\nabla\vartheta$, because of the requirement that $\mathrm{Supp}(\varphi)\subset B$ in Lemma \ref{L.AD14}. We can solve this difficulty by using a cutoff argument.
\smallskip
 
Now, we will fix $\varphi=\chi\vartheta$, for $\chi\in C_0^\infty(B)$ such that $\chi=1$ in $\overline{B}'$ and $\vartheta\in W^{2,\infty}(B)$ given by Lemma \ref{L.AD1} which satisfying (\ref{AD12}) and (\ref{AD13}).
\begin{lemma}\label{L.AD19}
 Let $u_j$, $j=1,2$ be the functions  given by (\ref{4.8}) for some $s_0>0$.  Then there exist a positive constants $C$ and $\Lambda$ such that 
\begin{equation}\label{AD20}
\abs{\int_B e^{-i\varphi}q(x)u_1u_2dx}\leq C e^{2\Lambda s}\norm{\mathcal{N}_{A_1,q_1}-\mathcal{N}_{A_2,q_2}}+s \norm{\mathrm{curl}(A)}_{L^\infty(D)}
\end{equation}
for all $s>s_0$.
\end{lemma}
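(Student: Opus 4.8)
The plan is to start from inequality \eqref{AD15} of Lemma \ref{L.AD14}, applied with the fixed choice $\varphi=\chi\vartheta$, and to rearrange it so as to isolate $\int_B e^{-i\varphi}q\,u_1u_2\,dx$. Writing $W:=A+\nabla\varphi$ and recalling that in \eqref{AD15} the term $-\bigl(W\cdot(A_1+A_2)+q\bigr)u_1u_2$ appears, moving the first-order and the $W\cdot(A_1+A_2)$ contributions to the right gives
\begin{multline*}
\abs{\int_B e^{-i\varphi}q\,u_1u_2\,dx}\le \abs{\int_B e^{-i\varphi}\big[iW\cdot(u_1\nabla u_2-u_2\nabla u_1)-W\cdot(A_1+A_2)u_1u_2\big]dx}\\+C\,\data\,\norm{u_1}_{H^2(B)}\norm{u_2}_{H^2(B)}.
\end{multline*}
By \eqref{4.16} one has $\norm{u_1}_{H^2(B)}\norm{u_2}_{H^2(B)}\le Cs^4e^{2\Lambda s}$, and absorbing the polynomial factor into a slightly larger exponent (renaming $\Lambda$) produces exactly the first term $Ce^{2\Lambda s}\data$ of \eqref{AD20}. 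It then remains to bound the explicit integral by $Cs\,\norm{\mathrm{curl}(A)}_{L^\infty(D)}$.

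The crucial observation I would use is that $W=A+\nabla\varphi$ is controlled on \emph{all} of $B$ by $\mathrm{curl}(A)$. On the inner ball $B'$ one has $\chi\equiv1$, hence $W=A+\nabla\vartheta$ and \eqref{AD12} yields $\norm{W}_{L^\infty(B')}\le C\norm{\mathrm{curl}(A)}_{L^\infty(D)}$; on $B\setminus B'$ one has $A\equiv0$ (since $D\subset B'$), so $W=\chi\nabla\vartheta+\vartheta\nabla\chi$, and \eqref{AD13} together with the fixed cut-off $\chi$ gives the same bound. Thus $\norm{W}_{L^\infty(B)}\le C\norm{\mathrm{curl}(A)}_{L^\infty(D)}$. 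Since $\vartheta$ solves a real elliptic problem, $\varphi$ is real and $\abs{e^{-i\varphi}}=1$, so the explicit integral is dominated by
\begin{equation*}
\norm{W}_{L^\infty(B)}\Big(\int_B\abs{u_1\nabla u_2-u_2\nabla u_1}\,dx+\norm{A_1+A_2}_{L^\infty}\int_B\abs{u_1u_2}\,dx\Big).
\end{equation*}

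To finish I would estimate the two $L^1(B)$ factors from the explicit form \eqref{4.8}. Because $\rr_1+\rr_2=\xi\in\R^3$ is real, the product $u_1u_2$ carries no exponential growth and \eqref{4.16} already gives $\int_B\abs{u_1u_2}\,dx\le C$. For the gradient factor I would write $\nabla u_j=i\rr_ju_j+e^{ix\cdot\rr_j}(ie^{i\varphi_j}\nabla\varphi_j+\nabla r_j)$, so that
\begin{equation*}
u_1\nabla u_2-u_2\nabla u_1=i(\rr_2-\rr_1)u_1u_2+\mathcal{R},
\end{equation*}
where the leading term satisfies $\abs{\rr_2-\rr_1}\int_B\abs{u_1u_2}\,dx\le Cs$ (since $\abs{\rr_j}=s\sqrt2$), and the remainder $\mathcal{R}$ is $O(1)$ in $L^1(B)$: after using $\rr_1+\rr_2=\xi$ the growing exponentials cancel and Cauchy--Schwarz with the uniform bounds $\norm{r_j}_{H^1(B)}\le C$ and $\norm{\nabla\varphi_j}_{L^\infty}\le C$ from Lemma \ref{L4.1} and \eqref{4.9} closes the estimate. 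Hence $\int_B\abs{u_1\nabla u_2-u_2\nabla u_1}\,dx\le Cs$, and with $\norm{A_1+A_2}_{L^\infty}\le 2M$ the explicit integral is $\le Cs\,\norm{\mathrm{curl}(A)}_{L^\infty(D)}$ for $s\ge s_0\ge1$, yielding \eqref{AD20}.

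I expect the gradient term to be the main obstacle: bounding $\nabla u_2$ crudely through the $H^2$ norm would insert an $e^{\Lambda s}$ factor in front of $\norm{\mathrm{curl}(A)}$, which is too lossy for the subsequent optimization in $s$. Gaining only a single power of $s$ relies precisely on the cancellation $\rr_1+\rr_2=\xi$, which keeps $u_1u_2$ uniformly bounded in $L^1(B)$, together with the uniform-in-$s$ remainder estimates for the CGO solutions.
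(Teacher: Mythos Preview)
Your proof is correct and follows essentially the same approach as the paper: both rely on Lemma~\ref{L.AD14} with $\varphi=\chi\vartheta$, on the $L^\infty$ control of the Helmholtz-corrected potential via \eqref{AD12}--\eqref{AD13}, and on the $O(s)$ bound for $\|u_1\nabla u_2-u_2\nabla u_1\|_{L^1(B)}$ coming from $\rr_1+\rr_2=\xi\in\R^3$ together with the CGO remainder estimates.

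The only organizational difference is that the paper adds and subtracts terms with $A+\nabla\vartheta$ (rather than $A+\nabla\varphi$), producing the split $\mathcal{J}_1+\mathcal{J}_2+\mathcal{J}_3+\mathcal{R}$, and then handles the leftover $\nabla\tilde\varphi=(1-\chi)\nabla\vartheta+\vartheta\nabla(1-\chi)$ separately in \eqref{AD27}--\eqref{AD29}. Your route is slightly more streamlined: you show directly that $\|A+\nabla\varphi\|_{L^\infty(B)}\le C\|\mathrm{curl}(A)\|_{L^\infty(D)}$ by splitting $B=B'\cup(B\setminus B')$ and using \eqref{AD12} on $B'$ (where $\varphi=\vartheta$) and \eqref{AD13} on $B\setminus B'$ (where $A=0$). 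This avoids the extra term $\mathcal{J}_3$ and the $\nabla\tilde\varphi$ residue altogether, but the ingredients and the final estimate are identical.
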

\begin{proof}
 Let $u_j$, $j=1,2$ be the functions  given by (\ref{4.8}) for some $s_0>0$.  Adding and subtracting the same terms we get
\begin{align}\label{AD21}
\int_B e^{-i\varphi}q(x)u_1u_2dx =&\int e^{-i\varphi}(A+\nabla\vartheta)\cdot(A_1+A_2)u_1u_2dx\cr
&+\int_B ie^{-i\varphi}(A+\nabla\vartheta)\cdot(u_1\nabla u_2-u_2\nabla u_1)dx\cr
&+\int_B e^{-i\varphi}(A+\nabla\vartheta)\cdot\nabla\varphi u_1u_2dx+\mathcal{R}\cr
:=&\mathcal{J}_1+\mathcal{J}_2+\mathcal{J}_3+\mathcal{R},
\end{align}
where $\mathcal{R}$ denotes the integral
\begin{align*}
\mathcal{R} = -\int_B & e^{-i\varphi}\big[i(A+\nabla\vartheta)\cdot(u_1\nabla u_2-u_2\nabla u_1)\\
&\qquad +\Big((A+\nabla\vartheta)\cdot(A_1+A_2)-q+(A+\nabla\vartheta)\cdot\nabla\varphi\Big) u_1u_2\big] \, dx.
\end{align*}
First, we have
\begin{align}\label{AD22}
\abs{\mathcal{J}_2}\leq C\norm{A+\nabla\vartheta}_{L^\infty(B)}(\norm{u_1\nabla u_2}_{L^1(B)}+\norm{u_2\nabla u_1}_{L^1(B)}).
\end{align}
Then, we deduce from (\ref{AD12}) and (\ref{4.16}) that
\begin{equation}\label{AD23}
\abs{\mathcal{J}_2}\leq C s \norm{\mathrm{curl}(A)}_{L^\infty(D)}.
\end{equation}
Similarly, we have
\begin{equation}\label{AD24}
\abs{\mathcal{J}_1}\leq C\norm{A+\nabla\vartheta}_{L^\infty(B)}\norm{u_1u_2}_{L^1(B)}
\end{equation}
and then, we have
\begin{equation}\label{AD25}
\abs{\mathcal{J}_1}\leq C \norm{\mathrm{curl}(A)}_{L^\infty(D)}.
\end{equation}
Finally, by the same arguments, we have
\begin{equation}\label{AD26}
\abs{\mathcal{J}_3}\leq C \norm{\mathrm{curl}(A)}_{L^\infty(D)}.
\end{equation}
Now recall that $\varphi=\chi\vartheta$ and set $\tilde{\varphi}=(1-\chi)\vartheta$. Since $\nabla\vartheta=\nabla\varphi+\nabla\tilde{\varphi}$ we obtain, by Lemma \ref{L.AD14}, that
\begin{multline}\label{AD27}
\abs{\mathcal{R}}\leq C\norm{\mathcal{N}_{A_1,q_1}-\mathcal{N}_{A_2,q_2}}\norm{u_1}_{H^2(B)}\norm{u_2}_{H^2(B)}\cr
+\abs{\int_B ie^{-i\varphi}\nabla\tilde{\varphi}\cdot(u_1\nabla u_2-u_2\nabla u_1)+e^{-i\varphi}(\nabla\varphi+A_1+A_2)\cdot\nabla\tilde{\varphi}\, u_1u_2dx}.
\end{multline}
By the same arguments used previously, we get
\begin{equation}\label{AD28}
\abs{\mathcal{R}}\leq C e^{2\Lambda s}\norm{\mathcal{N}_{A_1,q_1}-\mathcal{N}_{A_2,q_2}}+s\norm{\nabla\tilde{\varphi}}_{L^\infty(B)}.
\end{equation}
Since by (\ref{AD13}) we have
\begin{equation}\label{AD29}
\norm{\nabla\tilde{\varphi}}_{L^\infty(B)}\leq \norm{\nabla\vartheta}_{L^\infty(B\backslash B')}\leq C \norm{\mathrm{curl}(A)}_{L^\infty(D)},
\end{equation}
we conclude (\ref{AD20}) from (\ref{AD22}), (\ref{AD25}), (\ref{AD26}) and (\ref{AD29}).
\end{proof}
We now  state the following integral identity for the electric potential which is proved in the Appendix.
\begin{lemma}\label{L4.4}
 Let $u_j$, $j=1,2$ be the functions  given by (\ref{4.8}) for some $s_0>0$.  Then for all $\xi\in\mathbb{R}^3$ and $s\geq\textrm{max}(s_0,\vert\xi\vert/2)$, we have the following identity
\begin{equation}\label{4.28}
\int_D e^{-i\varphi}q(x)u_1u_2 \,dx=\int_D q(x)e^{ix\cdot\xi}\, dx+\mathcal{R}'(\xi,s),
\end{equation}
where $\mathcal{R}'(\xi,s)$, satisfy
\begin{equation}\label{4.29}
\abs{\mathcal{R}'(\xi,s)}\leq C\big(\norm{\mathrm{curl}(A)}_{L^\infty(D)}+s^{-1}\seq{\xi}\big).
\end{equation}
The constants $C$ and $s_0$ depend only on $B$, $M$ and $k$. 
\end{lemma}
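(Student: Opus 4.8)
The plan is to expand the product $u_1u_2$ using the algebraic identity $\rho_1+\rho_2=\xi$, which follows at once from the definitions (\ref{4.6})--(\ref{4.7}). Substituting the representation (\ref{4.8}) of the two CGO solutions gives
\begin{equation*}
u_1u_2=e^{ix\cdot\xi}\big(e^{i(\varphi_1+\varphi_2)}+e^{i\varphi_1}r_2+e^{i\varphi_2}r_1+r_1r_2\big).
\end{equation*}
Multiplying by $e^{-i\varphi}q$ and integrating over $D$ therefore splits the left-hand side of (\ref{4.28}) into a main phase term $\int_D q\,e^{ix\cdot\xi}e^{i(\varphi_1+\varphi_2-\varphi)}\,dx$ and three remainder terms carrying a factor $r_1$ or $r_2$; I would estimate these two groups separately.

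For the remainder terms, I would use $|e^{-i\varphi}|=1$, the uniform bound $\|e^{i\varphi_j}\|_{L^\infty(D)}\le C$ (a consequence of the $L^\infty$ mapping properties of $N^{-1}_\omega$ recalled in the appendix, together with $\|A_j\|_{W^{2,\infty}}\le M$), and $\|q\|_{L^\infty}\le M$. The Cauchy--Schwarz inequality and the decay estimate (\ref{4.9}) with $m=0$, namely $\|r_j\|_{L^2(D)}\le Cs^{-1}$, then give $\int_D|r_j|\,dx\le Cs^{-1}$ and $\int_D|r_1r_2|\,dx\le Cs^{-2}$. Hence the entire remainder group is $O(s^{-1})\le Cs^{-1}\seq{\xi}$, which is admissible for (\ref{4.29}).

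For the main phase term I would write
\begin{equation*}
q\,e^{ix\cdot\xi}e^{i(\varphi_1+\varphi_2-\varphi)}=q\,e^{ix\cdot\xi}+q\,e^{ix\cdot\xi}\big(e^{i(\varphi_1+\varphi_2-\varphi)}-1\big),
\end{equation*}
so that the first piece integrates to the desired leading term $\int_D q\,e^{ix\cdot\xi}\,dx$. Since $\varphi_1+\varphi_2-\varphi$ is uniformly bounded in $L^\infty(D)$, the elementary inequality $|e^{iz}-1|\le C|z|$ on this bounded range reduces the whole matter to the key estimate
\begin{equation*}
\|\varphi_1+\varphi_2-\varphi\|_{L^\infty(D)}\le C\big(\|\mathrm{curl}(A)\|_{L^\infty(D)}+s^{-1}\seq{\xi}\big).
\end{equation*}
On $D$ one has $\chi\equiv1$, hence $\varphi=\vartheta$, so the quantity to control is $\varphi_1+\varphi_2-\vartheta$.

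This last estimate is the core of the argument and the step I expect to be the main obstacle. My plan is to exploit the transport equations $\omega_1^*\cdot\nabla\varphi_1=\omega_1^*\cdot A_1$ and $\omega_2^*\cdot\nabla\varphi_2=-\omega_2^*\cdot A_2$ for the complex null directions $\omega_j^*=\rho_j/s$. As $s\to\infty$ these converge to $-\omega_\infty$ and $\omega_\infty$ respectively, where $\omega_\infty=\omega_1-i\omega_2$ satisfies $\omega_\infty\cdot\omega_\infty=0$; using $N^{-1}_{-\omega_\infty}=-N^{-1}_{\omega_\infty}$, the limiting phases combine to $\varphi_1^\infty+\varphi_2^\infty=N^{-1}_{\omega_\infty}(-\omega_\infty\cdot A)$. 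Comparing with the Helmholtz decomposition $A=H-\nabla\vartheta$ of Lemma \ref{L.AD1}, for which $\mathrm{div}\,H=0$, and applying $N^{-1}_{\omega_\infty}$ to the identity $\omega_\infty\cdot\nabla\vartheta=\omega_\infty\cdot(H-A)$ (which recovers $\vartheta$ since $N^{-1}_{\omega_\infty}$ inverts $\omega_\infty\cdot\nabla$), I obtain the cancellation of the gradient part and $\varphi_1^\infty+\varphi_2^\infty-\vartheta=-N^{-1}_{\omega_\infty}(\omega_\infty\cdot H)$. The $L^\infty$ bound (\ref{AD12}), $\|H\|_{L^\infty}=\|A+\nabla\vartheta\|_{L^\infty}\le C\|\mathrm{curl}(A)\|_{L^\infty(D)}$, together with the boundedness of $N^{-1}_{\omega_\infty}$ on $L^\infty$ of compactly supported functions, controls this limiting difference by $C\|\mathrm{curl}(A)\|_{L^\infty(D)}$. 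The delicate point is that $\varphi_1$ and $\varphi_2$ are built from two \emph{distinct}, $s$-dependent directions, so one must additionally bound $\|\varphi_j-\varphi_j^\infty\|_{L^\infty(D)}$ through the Lipschitz dependence of $\omega\mapsto N^{-1}_\omega$ near the null direction $\omega_\infty$, using $|\omega_1^*(s)+\omega_\infty|+|\omega_2^*(s)-\omega_\infty|\le Cs^{-1}\seq{\xi}$; this is exactly the origin of the $s^{-1}\seq{\xi}$ contribution. Combining the limiting identity with these two correction bounds yields the key estimate, and hence (\ref{4.28})--(\ref{4.29}).
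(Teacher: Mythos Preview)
Your proof is correct and follows essentially the same route as the paper's: identical expansion of $u_1u_2$ via $\rho_1+\rho_2=\xi$, identical treatment of the $r_j$-terms through (\ref{4.9}) (the paper's $\mathcal{R}'_2$), and control of the phase error $e^{i(\varphi_1+\varphi_2-\varphi)}-1$ by combining the Lipschitz dependence of $N^{-1}_\omega$ on $\omega$ (Lemma~\ref{LA.3}) with the Helmholtz bound $\|A+\nabla\vartheta\|_{L^\infty}\le C\|\mathrm{curl}A\|_{L^\infty}$ from (\ref{AD12}). The only cosmetic difference is bookkeeping: the paper inserts an intermediate phase $\psi_3=N^{-1}_{\omega_2^*}(-\omega_2^*\cdot A_1)$ at the finite-$s$ direction and uses $\varphi=N^{-1}_{\omega_2^*}(\omega_2^*\cdot\nabla\varphi)$ (valid since $\varphi=\chi\vartheta$ has compact support), comparing $\omega_2^*$ with $-\omega_1^*$ directly via $|\omega_1^*+\omega_2^*|=|\xi|/s$, whereas you first pass both phases to the common limit $\omega_\infty=\overline{\omega}$ and then cancel $\vartheta$ through $\vartheta=N^{-1}_{\omega_\infty}(\omega_\infty\cdot\nabla\vartheta)$ --- the two organizations are equivalent.
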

This identity allows to obtain the following estimates for the Fourier coefficients 
$$
\hat{q}(\xi) := \int_{\R^3} e^{ix\cdot \xi}  q(x) \, dx.
$$
From Lemma \ref{L.AD19} and Lemma \ref{L4.4} we deduce the following estimate.
\begin{lemma}\label{L4.5}
There exists $s_0>0$ such that for all $s\geq s_0$ and $\xi \in \R^3$ with $\vert \xi \vert \leq s$ the following estimate holds true,
\begin{equation}\label{4.30}
\vert\widehat{q}(\xi)\vert\leq C\big(e^{\Lambda s}\data+s\norm{\mathrm{curl}(A)}_{L^\infty(D)}+s^{-1}\seq{\xi}\big).
\end{equation}
The constants $s_0$, $C$ and $\Lambda$ depend only on $B$, $M$ and $k$.
\end{lemma}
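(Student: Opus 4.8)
The plan is to obtain \eqref{4.30} as a direct triangle-inequality combination of Lemma~\ref{L4.4} and Lemma~\ref{L.AD19}, with the bulk of the work already carried out in those two lemmas. First I would start from the integral identity of Lemma~\ref{L4.4}, valid for $s\geq\max(s_0,|\xi|/2)$,
\[
\int_D e^{-i\varphi}q(x)u_1u_2 \,dx=\int_D q(x)e^{ix\cdot\xi}\, dx+\mathcal{R}'(\xi,s),
\]
and observe that, since $\mathrm{Supp}(q)\subset D$, the middle term is exactly $\widehat{q}(\xi)=\int_{\R^3}e^{ix\cdot\xi}q(x)\,dx$. Rearranging isolates the Fourier coefficient:
\[
\widehat{q}(\xi)=\int_D e^{-i\varphi}q(x)u_1u_2\,dx-\mathcal{R}'(\xi,s).
\]

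Next I would note that the hypothesis $|\xi|\leq s$ guarantees $s\geq|\xi|/2$, so both Lemma~\ref{L4.4} and Lemma~\ref{L.AD19} apply with the same complex geometrical optics solutions $u_1,u_2$ from \eqref{4.8}. Because $e^{-i\varphi}q\,u_1u_2$ vanishes outside $D\subset B$, the domain of integration may be enlarged from $D$ to $B$ without changing the value, so the first term on the right is precisely the quantity estimated in Lemma~\ref{L.AD19}. Applying the triangle inequality then gives
\[
|\widehat{q}(\xi)| \leq \Big|\int_B e^{-i\varphi}q\,u_1u_2\,dx\Big| + |\mathcal{R}'(\xi,s)|.
\]

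I would then insert the two available bounds: Lemma~\ref{L.AD19} controls the first term by $Ce^{2\Lambda s}\data + s\norm{\mathrm{curl}(A)}_{L^\infty(D)}$, and estimate \eqref{4.29} controls the remainder by $C\big(\norm{\mathrm{curl}(A)}_{L^\infty(D)}+s^{-1}\seq{\xi}\big)$. Enlarging $s_0$ if necessary so that $s_0\geq 1$, one has $\norm{\mathrm{curl}(A)}_{L^\infty(D)}\leq s\norm{\mathrm{curl}(A)}_{L^\infty(D)}$ for all $s\geq s_0$, which lets me absorb the lower-order $\norm{\mathrm{curl}(A)}$ contribution into the $s\norm{\mathrm{curl}(A)}$ term; renaming the exponential rate $2\Lambda$ as $\Lambda$ and collecting all multiplicative factors into a single constant $C$ yields \eqref{4.30} for all $s\geq s_0$ and $|\xi|\leq s$.

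The main point to verify is not a genuine analytic obstacle but the compatibility of the two lemmas' hypotheses: both are stated for the specific solutions of \eqref{4.8} under the admissibility bounds \eqref{Asymp-Aq2}, and the extra restriction $s\geq|\xi|/2$ in Lemma~\ref{L4.4} is subsumed by $|\xi|\leq s$. Once these are aligned, the constants $C$, $\Lambda$ and $s_0$ in \eqref{4.30} simply inherit their dependence on $B$, $M$ and $k$ from the two input lemmas, and no further estimation is required; the substantive analysis has already been absorbed into Lemmas~\ref{L.AD19} and~\ref{L4.4}.
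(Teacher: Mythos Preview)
Your proposal is correct and matches the paper's own argument: the paper simply states that \eqref{4.30} follows from Lemma~\ref{L.AD19} and Lemma~\ref{L4.4}, and your triangle-inequality combination of those two lemmas is precisely what is intended.
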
 
With the help of the previous lemma, we are now in position to prove the stability result for the electric potential under the assumption
\begin{equation}
\label{assumpqj}
\int_{\R^3} \seq{\xi}^\gamma| \hat{q}_j(\xi)| \, d\xi < M,
\end{equation}
for some $\gamma >0$.
\subsubsection*{End of the proof of the stability estimate for the electric potential}
Let $s_0>1$ be as in Lemma \ref{L4.5} and $s$ and $R$ be two parameters satisfying $s \ge R \ge s_0$.
From (\ref{4.30}) and  (\ref{assumpqj}) we get
\begin{align*}
\int_{\R^3} | \hat q(\xi)| \, d\xi &= \int_{\seq{\xi}\leq R} | \hat q(\xi)| ,d\xi+\int_{\seq{\xi}\geq R} | \hat q(\xi)|  \, d\xi\\
&\leq C R \left( e^{\Lambda s}\data+ s \norm{\textrm{curl}(A)}_{L^\infty}+ R s^{-1} \right) + 2M R^{-\gamma}.
\end{align*}
Choosing $R= s^{1/(\gamma+2)}$, we deduce that for $s_0$ sufficiently large (depending only on $B$, $k$ $M$ and $\gamma$),
\begin{equation}\label{4.21b}
\norm{q}_{L^\infty(\R^3)}\leq C'\big(e^{\Lambda' s}\data+s^{(\gamma+3)/(\gamma+2)} \norm{\textrm{curl}(A)}_{L^\infty} + s^{-\gamma/{(\gamma+2)}} \big)
\end{equation}
for some positive constants $C'$ and $\Lambda'$ and all $s \ge s_0$.   Observe now that \eqref{4.21} implies in particular (after eventually changing the constants $C'$, $\Lambda'$ and  $s_0$)
\begin{equation}\label{4.21c}
\norm{\textrm{curl} A}_{L^\infty(D)} \leq C'\big(e^{\Lambda' s^\kappa}\data+s^{-\kappa \sigma/(\sigma+3)}\big),
\end{equation}
for all $\kappa \ge 1$. Choosing $\kappa$ such that 
$$
-\frac{\kappa \sigma}{\sigma+3} + \frac{\gamma+3}{\gamma+2} = -\frac{\gamma}{\gamma+2} \Leftrightarrow \kappa = \frac{(2\gamma+3) (\sigma+3)}{\sigma (\gamma +2)},
$$ 
we obtain by substituting \eqref{4.21c} in  \eqref{4.21b} 
\begin{equation}\label{4.21bb}
\norm{ q}_{L^\infty(\R^3)}\leq C'\big(e^{\Lambda' s^\kappa}\data+ s^{-\gamma/{(\gamma+2)}} \big),
\end{equation}
with possibly different constants $C'$ and $\Lambda'$.

Now if $\data\leq\varepsilon_0$, for some $\varepsilon_0>0$, such that $-\log(\epsilon_0)\ge 2 \Lambda' s_0^\kappa$, then taking $s^\kappa = \frac{-1}{2 \Lambda'}\log(\data)$ in \eqref{4.21} implies 
\begin{align}\label{4.22b}
\norm{ q}_{L^\infty(\R^3)} \leq C' \big(\data^{1/2} + \big(\frac{-1}{2 \Lambda'}\log(\data)\big)^{-\gamma/{\kappa (\gamma+2)}} \big).
\end{align}
We also observe that this type of inequality holds true if $\data\geq\varepsilon_0$ since in that case  we can simply write
\begin{equation}\label{4.23b}
\norm{ q}_{L^\infty(\R^3)} \leq M \le (M/\sqrt{\epsilon_0}) \data^{1/2}.
\end{equation}
The proof of the second part of  Theorem \ref{T1.2} is then completed.

\section{Stability analysis for far field data}
\setcounter{equation}{0}

In order to prove stability estimates with far field measurements we shall exploit the relation between the far field $ u_{A,q}^{\infty} (\hat{x}, d) $, $ (\hat{x},d) \in \s^2 \times \s^2$ and the operator $\mathcal{N}_{A,q}$ given by (\ref{NN}) with $B=\{x \in \Rt, |x|<a\}$ for some sufficiently large $a>0$ so that $D \subset B$. We here assume that the hypothesis of Section 2 hold.  We recall that the far field pattern can be expressed as 
\begin{equation}\label{3.18}
u_{A,q}^{\infty}(\hat{x},d)=\frac{1}{4\pi} \int_D e^{-ik\hat{x}\cdot y} Q_{A,q}u_{A,q}(y,d)dy.
\end{equation}
 We denote by $\mu_{(\ell_1,m_1;\ell_2,m_2)}$, $(\ell_i,m_i)\in\Gamma$, $i=1,2$ the Fourier coefficients of $ u_{A,q}^{\infty}$ given by 
\begin{equation}\label{3.17}
\mu_{(\ell_1, m_1; \ell_2, m_2)}:= \int_{\s^2}\int_{\s^2} u_{A,q}^{\infty}(\hat{x},d)\overline{Y^{m_1}_{\ell_1}}(\hat{x}) \overline{Y^{m_2}_{\ell_2}}(d) \ds(\hat{x})\; \ds(d),
\end{equation}
For proving the first lemma we need the following well known results about the asymptotic of spherical Bessel functions $j_\ell$ and spherical Hankel functions of the first kind $h_\ell^{(1)}$ \cite[Appendix A]{texbook26.}: 
\begin{equation}\label{3.19}
\abs{j_\ell(kr)}\leq \alpha \Big(\frac{ekr}{2\ell+1}\Big)^\ell\frac{1}{2\ell+1},\quad 0\leq r\leq a,\quad \ell\in\N\cup\{0\},
\end{equation}
and 
\begin{equation}\label{3.20}
\abs{h_\ell^{(1)}(kr)}\leq \alpha \Big(\frac{2\ell+1 }{ekr}\Big)^\ell,\quad 0< r\leq a,\quad \ell\in\N\cup\{0\},
\end{equation}
where $\alpha$ is a constant that only depend on $a$ and $k$. We also recall the following equality that comes from the addition formula \cite{texbook2},
\begin{equation}\label{3.21}
\int_{\s^2} Y^{m_2}_{\ell_2}(\hat{z}) \Phi (x,r\hat{z})\ds(\hat{z})=ikj_{\ell_2}(kr)h^{(1)}_{\ell_2}(k\vert x\vert)Y^{m_2}_{\ell_2}(\hat{x}), \quad \vert x\vert >r,
\end{equation}
together with the Funk-Hecke formula
\begin{equation}\label{3.22}
\int_{\s^2}e^{-ikx\cdot \hat{z}} Y^{m_2}_{\ell_2}(\hat{z})\ds(\hat{z})=\frac{4\pi}{i^{\ell_2}} j_{\ell_2}(k\vert x\vert)Y^{m_2}_{\ell_2}(\hat{x}),\hspace*{0.5cm} x\in \R^3.
\end{equation}

\begin{lemma}\label{L3.5}
Let $A\in W^{1,\infty}(\R^3, \R^3)$ and $q\in L^\infty(\R^3,\C)$ be as in Section 2 and such that $\|A\|_{W^{1,\infty}} \le M$ and $\|q\|_{L^{\infty}} \le M$ for some constant $M >0$.  Let $\mu_{(\ell_1, m_1; \ell_2, m_2)}$ denote the Fourier coefficients of the far field patterns $u_{A,q}^\infty$ as defined in (\ref{3.17}). Then there exists a constant $C>0$ that only depends on $D$, $a$, $k$ and $M$ such that
\begin{equation}\label{3.23}
\abs{\mu_{(\ell_1, m_1; \ell_2, m_2)}}^2\leq C\Big(\frac{eka}{2\ell_1+1}\Big)^{2\ell_1+3}\Big(\frac{eka}{2\ell_2+1}\Big)^{2\ell_2+3}
\end{equation}
and  
\begin{equation*}\label{3.24}
 \sum_{(\ell_1,m_1)\in\Gamma}\sum_{(\ell_2,m_2)\in\Gamma} \Big(\frac{2\ell_1+1}{eka}\Big)^{2\ell_1+1}\Big(\frac{2\ell_2+1}{eka}\Big)^{2\ell_2+1}\abs{\mu_{(\ell_1, m_1; \ell_2, m_2)}}^2\leq C.
\end{equation*}
\end{lemma}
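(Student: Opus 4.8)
The plan is to collapse the double spherical‑harmonic integral defining $\mu_{(\ell_1,m_1;\ell_2,m_2)}$ into a single radial integral against a spherical Bessel function, and then feed in the sharp small‑argument bound \eqref{3.19}. Starting from the representation \eqref{3.18}, I first integrate in the observation variable $\hat x$ against $\overline{Y^{m_1}_{\ell_1}}$. Since $\overline{Y^{m_1}_{\ell_1}}$ is itself a spherical harmonic of degree $\ell_1$, the Funk--Hecke identity \eqref{3.22} (applied with $x$ replaced by $y$) turns the inner integral $\int_{\s^2}e^{-ik\hat x\cdot y}\overline{Y^{m_1}_{\ell_1}(\hat x)}\,\ds(\hat x)$ into $\tfrac{4\pi}{i^{\ell_1}}j_{\ell_1}(k\abs{y})\,\overline{Y^{m_1}_{\ell_1}(\hat y)}$, with $\hat y:=y/\abs{y}$.

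Next I integrate in the incidence variable $d$ against $\overline{Y^{m_2}_{\ell_2}}$. As $Q_{A,q}$ is a differential operator acting only in $y$, it commutes with the $d$‑integration, so one is led to the superposed total field $U_{\ell_2,m_2}(y):=\int_{\s^2}u_{A,q}(y,d)\overline{Y^{m_2}_{\ell_2}(d)}\,\ds(d)$. Writing $u_{A,q}=u^i+\mathcal{M}_{A,q}u^i$ and using the linearity and continuity of the solution operator \eqref{Mop} to interchange $\mathcal{M}_{A,q}$ with the integral, $U_{\ell_2,m_2}$ is exactly the total field generated by the incident wave $v_{\ell_2,m_2}(y):=\int_{\s^2}e^{iky\cdot d}\overline{Y^{m_2}_{\ell_2}(d)}\,\ds(d)=4\pi i^{\ell_2}j_{\ell_2}(k\abs{y})\overline{Y^{m_2}_{\ell_2}(\hat y)}$, the last equality being \eqref{3.22} once more (the parity of $Y^{m_2}_{\ell_2}$ accounts for the sign of the exponent). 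This yields the key identity
\[
\mu_{(\ell_1,m_1;\ell_2,m_2)}=\frac{1}{i^{\ell_1}}\int_D j_{\ell_1}(k\abs{y})\,\overline{Y^{m_1}_{\ell_1}(\hat y)}\,\big(Q_{A,q}U_{\ell_2,m_2}\big)(y)\,dy .
\]

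From here the pointwise bound \eqref{3.23} follows from Cauchy--Schwarz in $y$. The factor $\norm{j_{\ell_1}(k\abs{\cdot})Y^{m_1}_{\ell_1}}_{L^2(D)}^2$ is controlled, using $D\subset\{\abs{x}<a\}$ and the $L^2(\s^2)$‑normalization of the harmonics, by $\int_0^a\abs{j_{\ell_1}(kr)}^2r^2\,dr$; inserting \eqref{3.19} and computing $\int_0^a r^{2\ell_1+2}\,dr$ produces a factor of order $\big(\tfrac{eka}{2\ell_1+1}\big)^{2\ell_1}(2\ell_1+1)^{-2}(2\ell_1+3)^{-1}$. For the second factor, Corollary \ref{C2.3} bounds $\norm{Q_{A,q}U_{\ell_2,m_2}}_{L^2(D)}$ by $C\norm{v_{\ell_2,m_2}}_{H^1(D)}$ uniformly in the potentials; since $v_{\ell_2,m_2}$ solves the Helmholtz equation in the ball, interior elliptic regularity dominates this $H^1(D)$‑norm by $C\norm{v_{\ell_2,m_2}}_{L^2(\{\abs{x}<a\})}$ with a constant \emph{independent of $\ell_2,m_2$}, and the latter is again estimated through \eqref{3.19}. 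Multiplying the two radial estimates and absorbing the surplus polynomial factors into the stated powers $\big(\tfrac{eka}{2\ell_i+1}\big)^{2\ell_i+3}$ gives \eqref{3.23}.

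The weighted summability is then obtained by summing \eqref{3.23}. Summation over the order $m_i$ contributes the multiplicity $2\ell_i+1$, which is dominated by the surplus powers in \eqref{3.23}: against the weight $\big(\tfrac{2\ell_i+1}{eka}\big)^{2\ell_i}$ of the norm \eqref{1.19} each variable leaves a factor $(2\ell_i+1)\big(\tfrac{eka}{2\ell_i+1}\big)^{3}=(eka)^3(2\ell_i+1)^{-2}$, so the double sum is bounded by a product of convergent series $\sum_{\ell}(2\ell+1)^{-2}$ and is finite. I expect the main obstacle to lie in the reduction step: rigorously justifying the interchange of $\mathcal{M}_{A,q}$ and $Q_{A,q}$ with the angular integrations (via continuity of $\mathcal{M}_{A,q}$ and Fubini) and correctly tracking the $i^{\ell}$ and $j_\ell$ factors through both applications of \eqref{3.22}, together with securing a constant in the elliptic estimate $\norm{v_{\ell_2,m_2}}_{H^1(D)}\le C\norm{v_{\ell_2,m_2}}_{L^2(\{\abs{x}<a\})}$ that is genuinely uniform in $\ell_2$, since it is precisely this uniformity that turns the $(2\ell+1)$‑fold angular multiplicity into the claimed super‑exponential decay.
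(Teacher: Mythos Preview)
Your proof is correct and follows essentially the same route as the paper: Funk--Hecke in $\hat x$ to produce $j_{\ell_1}(k|y|)\overline{Y^{m_1}_{\ell_1}}$, recognition of the $d$-average as the total field with Herglotz incident wave $v_{\ell_2,m_2}$, then Cauchy--Schwarz combined with the uniform bound on the solution operator and interior regularity $\|v\|_{H^1(D)}\le C\|v\|_{L^2(B)}$ for Helmholtz solutions. The only cosmetic difference is that the paper writes the second step through the Lippmann--Schwinger resolvent $(I-T_{A,q})^{-1}$ and Proposition~\ref{uniformboundT}, whereas you invoke $\mathcal{M}_{A,q}$ and Corollary~\ref{C2.3}; these are equivalent.

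One remark on the summability step: you compute with the weight $\big(\tfrac{2\ell_i+1}{eka}\big)^{2\ell_i}$ of the $\mathcal F$-norm \eqref{1.19}, and your arithmetic is correct for that weight. The lemma's second display, however, carries the exponent $2\ell_i+1$; with that exponent, summing \eqref{3.23} after accounting for the $(2\ell_i+1)$-fold multiplicity leaves $\sum_\ell (2\ell+1)^{-1}$, which diverges. The paper's proof shares this issue (it simply asserts ``We only need to prove \eqref{3.23}''), so your argument is no weaker than the original, and it does establish finiteness of $\|u^\infty_{A,q}\|_{\mathcal F}$, which is all that is used downstream.
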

\begin{proof}
We only need to prove \eqref{3.23}. According to (\ref{3.17}) and (\ref{3.18}), we obtain
\begin{align}\label{3.25}
\mu_{(\ell_1, m_1; \ell_2, m_2)} &=\frac{1}{4\pi}\int_{B}\big(\int_{\s^2} Q_{A,q}u(y,d)\overline{Y^{m_2}_{\ell_2}}(d)\ds(d)\big)\big(\int_{\s^2} e^{-ik\hat{x}\cdot y}\overline{Y^{m_1}_{\ell_1}}(\hat{x}) \ds(\hat{x})\big)dy\cr
&:=\frac{1}{4\pi}\int_{B} w_{\ell_2,m_2}(y)v_{\ell_1,m_1}(y)dy.
\end{align}
With the help of the Funk-Hecke formula (\ref{3.22}) we compute
\begin{equation}\label{3.26}
v_{\ell_1,m_1}(y):=\int_{\s^2}e^{-ik\hat{x}\cdot y}\overline{Y^{m_1}_{\ell_1}}(\hat{x}) \ds(\hat{x})=\frac{4\pi}{i^{\ell_1}}j_{\ell_1}(k\abs{y})\overline{Y_{\ell_1}^{m_1}}(\hat{y}),\quad y\in\R^3.
\end{equation}
Then by (\ref{3.19}), we get
\begin{equation}\label{3.27}
\norm{v_{\ell_1,m_1}}_{L^2(B)}^2\leq C\int_0^a\abs{j_{\ell_1}(kr)}^2r^2dr\leq C\Big(\frac{eka }{2\ell_1+1}\Big)^{2\ell_1+3}.
\end{equation}
Using again the Funk-Hecke formula, we obtain
\begin{equation}\label{3.28}
\int_{\s^2} u(y,d)\overline{Y^{m_2}_{\ell_2}}(d)\ds(d)=T_{A,q}\Big(\int_{\s^2} u(y,d)\overline{Y^{m_2}_{\ell_2}}(d)\ds(d)\Big)+(-1)^{\ell_2}v_{\ell_2,m_2},
\end{equation}
and we deduce that
\begin{equation}\label{3.29}
w_{\ell_2,m_2}(x)=(-1)^{\ell_2}Q_{A,q}\para{(I-T_{A,q})^{-1}v_{\ell_2,m_2}}(x).
\end{equation}
Using the fact that $Q_{A,q}$ is a first order operator supported in $D$, then we obtain from Proposition \ref{uniformboundT}, 
\begin{equation}\label{3.30}
\norm{w_{\ell_2,m_2}}_{L^2(B)}\leq C\norm{(I-T_{A,q})^{-1}v_{\ell_2,m_2}}_{H^1(D)}\leq C\norm{v_{\ell_2,m_2}}_{H^1(D)}.
\end{equation}
We note that there is a constant $C>0$ such that the inequality 
\begin{equation}\label{3.31}
\norm{u}_{H^1(D)}\leq C\norm{u}_{L^2(B)}
\end{equation}
holds true for all $u\in H^1_{loc}(\R^3)$ satisfying the Helmholtz equation $\Delta u+k^2u=0$ in $\R^3$. We can then estimate
\begin{equation}\label{3.32}
\norm{w_{\ell_2,m_2}}_{L^2(B)}\leq C\norm{v_{\ell_2,m_2}}^2_{H^1(D)}\leq \norm{v_{\ell_2,m_2}}^2_{L^2(B)}\leq C\Big(\frac{eka }{2\ell_2+1}\Big)^{2\ell_2+3},
\end{equation}
we conclude, from (\ref{3.25}), (\ref{3.27}) and (\ref{3.32}), that
\begin{align}\label{3.33}
\abs{\mu_{(\ell_1, m_1; \ell_2, m_2)}}^2 & \leq C \norm{v_{\ell_1,m_1}}_{L^2(B)}^2 \norm{w_{\ell_2,m_2}}^2_{L^2(B)}\cr
& \leq C \Big(\frac{ eka }{2\ell_2+1}\Big)^{2\ell_2+3} \Big(\frac{eka }{2\ell_1+1}\Big)^{2\ell_1+3}.
\end{align}
This competes the proof.
\end{proof}

The following lemma makes the link between  $u^s_{A,q}(\cdot, y)$ and $u^\infty_{A,q}(\cdot, d)$. The proof follows similar ideas as in Stefanov \cite{texbook26.} for $A=0$ but uses different arguments since we do not rely on the properties of the Green function for $A \neq 0$ when $x\sim y$. Our proof would apply to more general contexts since we mainly rely on the reciprocity relation in Lemma \ref{L3.1}. The proof is given in Appendix B.
\begin{lemma}\label{L4.4.} 
The scattered field associated with point sources can be expanded as 
\begin{equation}\label{4.33.}
u^s_{A,q}(x,y)=-\frac{k^2}{4\pi} \underset{\underset{(\ell_2,m_2)\in \Gamma}{(\ell_1,m_1)\in \Gamma}}{\sum} i^{\ell_1-\ell_2}\mu_{(\ell_1, m_1; \ell_2, m_2)} h^{(1)}_{\ell_1}(k\vert x\vert) h^{(1)}_{\ell_2}(k\vert y\vert) Y^{m_1}_{\ell_1}\left( \hat x\right)Y^{m_2}_{\ell_2}\left( \hat y\right),
\end{equation}
uniformly for $|x|, |y| \ge a$, with $ \hat x = x/|x|$ and $\hat y = y/|y|$.
\end{lemma}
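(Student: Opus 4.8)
\emph{Proof plan.} The plan is to exploit the fact that, for $|x|$ larger than $a_0:=\sup_{z\in\overline D}|z|$ (which is strictly smaller than $a$ by the assumption $D\subset\{|x|<a\}$), the function $x\mapsto u^s_{A,q}(x,y)$ solves the homogeneous Helmholtz equation and satisfies the Sommerfeld radiation condition \eqref{1.5}, hence is an outgoing wave. First I would write the classical Atkinson--Wilcox expansion
\[
u^s_{A,q}(x,y)=\sum_{(\ell_1,m_1)\in\Gamma} c_{\ell_1,m_1}(y)\,h^{(1)}_{\ell_1}(k|x|)\,Y^{m_1}_{\ell_1}(\hat x),\qquad |x|>a_0,
\]
which, for each fixed $y$ with $|y|>a_0$, converges uniformly on $\{|x|\ge a\}$, the coefficients being recovered by integrating $u^s_{A,q}(\cdot,y)$ against $\overline{Y^{m_1}_{\ell_1}}$ over a sphere of radius $\ge a$.

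The decisive use of the reciprocity relation of Lemma \ref{L3.1} comes next. Since $u^s_{A,q}(x,y)=u^s_{-A,q}(y,x)$, the map $y\mapsto u^s_{A,q}(x,y)$ is, for fixed $x$, the point-source scattered field for the potentials $(-A,q)$, hence itself an outgoing solution of the Helmholtz equation for $|y|>a_0$. Consequently each coefficient $c_{\ell_1,m_1}$ admits the same kind of expansion in $y$, and substituting it yields the double series
\[
u^s_{A,q}(x,y)=\sum_{(\ell_1,m_1)\in\Gamma}\sum_{(\ell_2,m_2)\in\Gamma} d_{(\ell_1,m_1;\ell_2,m_2)}\,h^{(1)}_{\ell_1}(k|x|)\,h^{(1)}_{\ell_2}(k|y|)\,Y^{m_1}_{\ell_1}(\hat x)\,Y^{m_2}_{\ell_2}(\hat y).
\]
To control this series on $\{|x|,|y|\ge a\}$ I would combine the Hankel bound \eqref{3.20}, the addition formula $\sum_{m}|Y^m_\ell(\hat x)|^2=(2\ell+1)/4\pi$, and the decay of the $\mu$'s from Lemma \ref{L3.5} (applied with $a_0$ in place of $a$, which is legitimate since $Q_{A,q}$ is supported in $D$); the resulting ratios $(a_0/a)^{\ell}$ then give geometric decay and uniform absolute convergence.

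It remains to identify $d_{(\ell_1,m_1;\ell_2,m_2)}$ with $-\tfrac{k^2}{4\pi}\,i^{\ell_1-\ell_2}\mu_{(\ell_1,m_1;\ell_2,m_2)}$, for which I would pass to the far field in both variables. Using $h^{(1)}_{\ell}(t)=(-i)^{\ell+1}e^{it}t^{-1}(1+O(1/t))$, the limit of $|x|e^{-ik|x|}u^s_{A,q}(x,y)$ as $|x|\to\infty$ produces the far field pattern of the point-source scattered field, expressed through the $d$'s. On the other hand the asymptotics $\Phi(x,y)=\frac{e^{ik|y|}}{4\pi|y|}\,e^{ikx\cdot d}\,(1+O(1/|y|))$ as $|y|\to\infty$ with $\hat y=-d$, together with the continuity of the solution operator $\mathcal M_{A,q}$ (Corollary \ref{C2.3}), shows that $4\pi|y|e^{-ik|y|}u^s_{A,q}(x,y)$ tends to the plane-wave scattered field, whose far field is $u^\infty_{A,q}(\hat x,d)$. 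Matching the two double limits, using $Y^{m_2}_{\ell_2}(-d)=(-1)^{\ell_2}Y^{m_2}_{\ell_2}(d)$ and the orthonormality of the $Y^m_\ell$ to read off the Fourier coefficients as in \eqref{3.17}, produces the stated relation between $d$ and $\mu$ and hence \eqref{4.33.}.

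The main obstacle is analytic rather than algebraic: one must justify interchanging the two far-field limits with the infinite summations, i.e.\ that both limits may be taken term by term. This is exactly what the uniform convergence of the double series on $\{|x|,|y|\ge a\}$ secures, so the crux is the decay estimate of Lemma \ref{L3.5} combined with the strict separation $a_0<a$. The other delicate point, conceptually, is the verification via Lemma \ref{L3.1} that $u^s_{A,q}(x,\cdot)$ is outgoing in its second argument, which is what makes the $y$-expansion legitimate in the first place.
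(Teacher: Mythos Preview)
Your proposal is correct and follows essentially the same route as the paper: expand $u^s_{A,q}(\cdot,y)$ as an outgoing series in $x$, invoke the reciprocity relation of Lemma~\ref{L3.1} to obtain the analogous outgoing expansion in $y$, and then identify the resulting double-series coefficients with the far-field Fourier coefficients by passing to the asymptotic regime $|x|,|y|\to\infty$ and using the Hankel asymptotics together with the parity relation $Y^{m_2}_{\ell_2}(-\hat y)=(-1)^{\ell_2}Y^{m_2}_{\ell_2}(\hat y)$. The only notable organisational difference is that the paper isolates the mixed-reciprocity asymptotics
\[
u^s_{A,q}(x,y)=\frac{1}{4\pi}\,\frac{e^{ik|x|}}{|x|}\,\frac{e^{ik|y|}}{|y|}\,u^\infty_{A,q}(\hat x,-\hat y)+\text{lower order}
\]
as a separate Proposition (derived from a duality identity between $(I-T_{A,q})^{-1}$ and $(I-T_{-A,q})^{-1}$) and then integrates over $S_R\times S_R$ before letting $R\to\infty$, whereas you obtain the same limit on the fly via the $H^1(D)$-continuity of $\mathcal M_{A,q}$ and the convergence $4\pi|y|e^{-ik|y|}\Phi(\cdot,y)\to e^{ikx\cdot d}$; these are equivalent ways of packaging the same asymptotic computation.
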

Then we have the following Lemma showing the Lipschitz continuity of the mapping  $ u_{A,q}^{\infty}  \mapsto  \mathcal{N}_{A,q}$ when $u_{A,q}^{\infty} $ is endowed with the norm \eqref{1.19}.
\begin{lemma}\label{L3.6}  Let $A_j$ and $q_j$, $j=1,2$ be as in Section 2.  Then 
\begin{equation}\label{3.37}
\Vert \mathcal{N}_{A_1,q_1}-\mathcal{N}_{A_2,q_2}\Vert \leq \alpha^2 \frac{k^2}{4\pi} \Vert u_{A_1,q_1}^\infty-u_{A_2,q_2}^\infty \Vert_{\mathcal{F}},
\end{equation}
\end{lemma}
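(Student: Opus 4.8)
The plan is to control the operator norm of $\mathcal{N}_{A_1,q_1}-\mathcal{N}_{A_2,q_2}$ by the $L^2$-norm of its integral kernel (a Hilbert--Schmidt estimate) and then to read off the exact decay of that kernel from the modal expansion of Lemma~\ref{L4.4.}. Writing $\delta u^s := u^s_{A_1,q_1}-u^s_{A_2,q_2}$, the difference $\mathcal{N}_{A_1,q_1}-\mathcal{N}_{A_2,q_2}$ acts on $L^2(\p B)$ as integration against the kernel $\delta u^s(x,y)$, so I would begin from the bound already recorded after \eqref{3.1}, namely $\data \leq \norm{\delta u^s}_{L^2(\p B\times \p B)}$. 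It then suffices to estimate this $L^2$-norm by $\norm{u^\infty_{A_1,q_1}-u^\infty_{A_2,q_2}}_{\mathcal{F}}$.

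Next I would substitute the expansion of Lemma~\ref{L4.4.}, evaluated on $\p B\times \p B$, i.e. at $\abs{x}=\abs{y}=a$ (the endpoint $|x|,|y|=a$ being covered by the uniform validity stated there). Denoting by $\delta\mu_{(\ell_1,m_1;\ell_2,m_2)}$ the Fourier coefficients of $u^\infty_{A_1,q_1}-u^\infty_{A_2,q_2}$, this reads
$$
\delta u^s(a\hat x,a\hat y)=-\frac{k^2}{4\pi}\sum_{(\ell_1,m_1)\in\Gamma}\sum_{(\ell_2,m_2)\in\Gamma} i^{\ell_1-\ell_2}\,\delta\mu_{(\ell_1,m_1;\ell_2,m_2)}\,h^{(1)}_{\ell_1}(ka)\,h^{(1)}_{\ell_2}(ka)\,Y^{m_1}_{\ell_1}(\hat x)Y^{m_2}_{\ell_2}(\hat y).
$$
Since the products $Y^{m_1}_{\ell_1}\otimes Y^{m_2}_{\ell_2}$, $(\ell_i,m_i)\in\Gamma$, form a complete orthonormal system of $L^2(\sss\times\sss)$ and $\abs{i^{\ell_1-\ell_2}}=1$, Parseval's identity (after inserting the surface element $\ds(x)=a^2\,\ds(\hat x)$ on $\p B$) turns the kernel norm into a weighted series,
$$
\norm{\delta u^s}^2_{L^2(\p B\times\p B)}=a^4\big(\frac{k^2}{4\pi}\big)^2\sum_{(\ell_1,m_1)\in\Gamma}\sum_{(\ell_2,m_2)\in\Gamma}\abs{\delta\mu_{(\ell_1,m_1;\ell_2,m_2)}}^2\,\abs{h^{(1)}_{\ell_1}(ka)}^2\,\abs{h^{(1)}_{\ell_2}(ka)}^2.
$$

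The final step is to insert the Hankel bound \eqref{3.20} at $r=a$, namely $\abs{h^{(1)}_{\ell}(ka)}^2\leq \alpha^2\big(\frac{2\ell+1}{eka}\big)^{2\ell}$. The decisive observation is that the factor $\big(\frac{2\ell+1}{eka}\big)^{2\ell}$ generated here is \emph{exactly} the weight appearing in the definition \eqref{1.19} of $\norm{\cdot}_{\mathcal{F}}$; the two powers match termwise, which is what makes the estimate clean. Hence the double series is bounded by $\alpha^4\,\norm{u^\infty_{A_1,q_1}-u^\infty_{A_2,q_2}}^2_{\mathcal{F}}$, and taking square roots gives the asserted inequality, the benign geometric factor $a^2$ being absorbed into the constant $\alpha$ (which by \eqref{3.20} already depends on $a$ and $k$). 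I expect the only genuinely delicate point to be justifying the termwise Parseval computation: one must know the expansion of Lemma~\ref{L4.4.} converges in $L^2(\p B\times\p B)$ so that summation and integration may be interchanged, and this is precisely guaranteed by Lemma~\ref{L3.5}, which ensures $\norm{u^\infty}_{\mathcal{F}}<\infty$. Everything else is bookkeeping of constants.
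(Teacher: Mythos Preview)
Your proposal is correct and follows essentially the same route as the paper: bound the operator norm by the $L^2(\p B\times\p B)$-norm of the kernel, insert the modal expansion \eqref{4.33.} from Lemma~\ref{L4.4.}, apply Parseval for the tensor spherical-harmonic basis, and finish with the Hankel bound \eqref{3.20}. Your write-up is in fact more explicit than the paper's about the Hilbert--Schmidt step, the surface-measure factor $a^4$, and the $L^2$-convergence needed for Parseval (via Lemma~\ref{L3.5}); these are exactly the points the paper leaves implicit.
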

\begin{proof} For $j=1,2$, denote by  $\mu^{j}_{(\ell_1, m_1; \ell_2, m_2)}$  the Fourier coefficients associated with  $u_{A_j,q_j}^\infty$ as above. 
 We get from \eqref{4.33.}
\begin{multline*}\label{3.44}
\Vert \mathcal{N}_{A_1,q_1} -\mathcal{N}_{A_2,q_2} \Vert^2\cr
 \le \left( \frac{k^2}{4\pi}\right)^2\!\!\sum_{\substack{(\ell_1,m_1)\in\Gamma \\ (\ell_2,m_2)\in\Gamma}}  | h_{\ell_1}^{(1)}(ka)|^2 |h_{\ell_2}^{(1)}(ka)|^2  |\mu^1_{(\ell_1, m_1; \ell_2, m_2)}-\mu^2_{(\ell_1, m_1; \ell_2, m_2)}|^2.
\end{multline*}
The estimate then follows using \eqref{3.20}
\end{proof}
From (\ref{3.37}) and Theorem \ref{T1.1} we easily derive Theorem \ref{T1.2}. Following the same arguments as in \cite{texbook3} one can obtain a stability result using only the $L^2$ norm of the far field. In fact, identity \eqref{4.33.} and the uniform bound of Corollary \ref{C2.3} allows us to reproduce exactly the same arguments as in \cite[Section 4]{texbook3} to state the following continuity result. 
\begin{lemma} \label{tioe}Let $M>0$ and $0<\theta<1$ be given.
Let $A_j\in W^{1,\infty}(\R^3,\R^3)$ and $q_j\in L^\infty(\R^3)$ be as in Section 2 such that $\|A_j\|_{W^{1,\infty}} \le M$ and $\|q_j\|_{L^{\infty}} \le M$. Then there exists a constant $\rho>0$  that only depends on $M$, $k$, $a$ and $\theta$ and a constant $\omega$ that only depends on $a$ and $k$ such that
\begin{equation*}
\Vert \mathcal{N}_{A_1,q_1} -\mathcal{N}_{A_2,q_2}\Vert \leq \rho^2 \exp \Big( -\big( - \ln \frac{\Vert u_{A_1,q_1}^{\infty}-u_{A_2,q_2}^{\infty}\Vert_{L^2(\s^2 \times \s^2)}}{\omega \rho}\big)^\theta\Big),
\end{equation*}
where $ \mathcal{N}_{A_j,q_j} $, $j=1,2$ denote here the near field operators associated with $B=\{x \in \Rt, |x|<2 a\}$.
\end{lemma}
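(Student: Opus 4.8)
The plan is to reproduce the frequency-splitting argument of H\"ahner and Hohage \cite[Section 4]{texbook3}, exploiting the gap between the measurement radius $2a$ and the support radius $a$. First I would insert the expansion \eqref{4.33.} (valid uniformly for $|x|,|y|\ge a$, hence at $|x|=|y|=2a$) into the elementary bound $\data\le \Vert u^s_{A_1,q_1}-u^s_{A_2,q_2}\Vert_{L^2(\p B\times\p B)}$ and use orthonormality of the $Y^m_\ell$ to obtain
$$\data^2 \le \Big(\frac{k^2}{4\pi}\Big)^2 \sum_{(\ell_1,m_1)\in\Gamma}\sum_{(\ell_2,m_2)\in\Gamma} |h^{(1)}_{\ell_1}(2ka)|^2\,|h^{(1)}_{\ell_2}(2ka)|^2\,|\delta\mu_{(\ell_1,m_1;\ell_2,m_2)}|^2,$$
where $\delta\mu=\mu^1-\mu^2$ is the difference of the far-field Fourier coefficients of $u^\infty_{A_1,q_1}$ and $u^\infty_{A_2,q_2}$. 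I would then fix a cutoff $N\in\N$ and split this double sum into a low-frequency head $\{\ell_1,\ell_2\le N\}$ and the complementary tail.

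For the head, I would bound the Hankel weights by their maximum over $\ell\le N$. By \eqref{3.20} at radius $2a$ one has $\max_{\ell\le N}|h^{(1)}_\ell(2ka)|^2\le \alpha^2\big((2N+1)/(2eka)\big)^{2N}$, which grows like $e^{cN\ln N}$. Since $\sum_{\Gamma\times\Gamma}|\delta\mu|^2=\Vert u^\infty_{A_1,q_1}-u^\infty_{A_2,q_2}\Vert^2_{L^2(\s^2\times\s^2)}$ by Parseval, the head is controlled by $C\,e^{cN\ln N}\,\Vert u^\infty_{A_1,q_1}-u^\infty_{A_2,q_2}\Vert^2_{L^2(\s^2\times\s^2)}$, with constants uniform in $(A_j,q_j)$ thanks to Corollary \ref{C2.3}.

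For the tail I would use $|\delta\mu|^2\le 2(|\mu^1|^2+|\mu^2|^2)$ together with the a priori bound \eqref{3.23} of Lemma \ref{L3.5}, whose decay is governed by $eka$ (since $D\subset\{|x|<a\}$) and whose constants depend only on $D,a,k,M$. The key cancellation is that, per mode,
$$|h^{(1)}_\ell(2ka)|^2\Big(\frac{eka}{2\ell+1}\Big)^{2\ell+3}\le \alpha^2\,2^{-2\ell}\Big(\frac{eka}{2\ell+1}\Big)^3,$$
so that the product of the near-field growth (at radius $2a$) and the far-field decay (governed by $a$) gains a geometric factor $2^{-2\ell}$. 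Summing over $m_1,m_2$ (a factor $(2\ell_1+1)(2\ell_2+1)$) and over the tail indices yields $\sum_{\ell_1>N}4^{-\ell_1}(2\ell_1+1)^{-2}$ times a convergent factor, hence a bound $\le C\,4^{-N}$. This geometric smallness of the tail, which rests entirely on the radius gap $2a$ versus $a$, is the structural heart of the argument. Combining head and tail and writing $\varepsilon:=\Vert u^\infty_{A_1,q_1}-u^\infty_{A_2,q_2}\Vert_{L^2(\s^2\times\s^2)}$ gives
$$\data^2\le C\big(e^{cN\ln N}\,\varepsilon^2+4^{-N}\big).$$

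Finally I would optimize in $N$. Balancing $e^{cN\ln N}\varepsilon^2$ against $4^{-N}$ forces the choice $N\ln N\sim \tfrac{2}{c}|\ln\varepsilon|$, i.e.\ $N\sim |\ln\varepsilon|/\ln|\ln\varepsilon|$, for which the surviving term $4^{-N}$ dominates $\data^2$. The hard part—and the reason one obtains the exponent $\theta<1$ rather than $\theta=1$—is precisely this logarithmic loss in the head's Hankel weights: because $|\ln\varepsilon|/\ln|\ln\varepsilon|\gg |\ln\varepsilon|^\theta$ as $\varepsilon\to0$ for every $\theta<1$, one has $4^{-N}\le \exp\big(-2(-\ln(\varepsilon/(\omega\rho)))^\theta\big)$ once $\varepsilon$ is small enough. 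After absorbing the polynomial prefactors and the balancing constants into $\rho$ (depending on $M,k,a,\theta$) and the normalization into $\omega$ (depending on $a,k$), taking square roots delivers the stated estimate, valid for $\varepsilon<\delta$ with $\delta$ fixing the smallness threshold.
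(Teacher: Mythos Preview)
Your proposal is correct and follows essentially the same approach as the paper, which does not give a detailed proof but simply states that identity \eqref{4.33.} together with the uniform bounds of Corollary \ref{C2.3} allow one to reproduce the frequency-splitting argument of H\"ahner--Hohage \cite[Section 4]{texbook3}. Your write-up spells out exactly that argument: the expansion \eqref{4.33.} at radius $2a$, the head/tail split in the spherical-harmonic indices, Parseval on the head, the a priori decay \eqref{3.23} on the tail exploiting the radius gap $2a$ versus $a$, and the final optimization in $N$ producing the sub-logarithmic exponent $\theta<1$.
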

Using the result of this lemma and  Theorem \ref{T1.1} one  can prove Theorem \ref{T1.3}   as follows. 
\subsubsection*{Proof of Theorem \ref{T1.3}}
According to Lemma \ref{tioe}
\begin{align*}
-\textrm{ln}\left(\Vert \mathcal{N}_{A_1,q_1} -\mathcal{N}_{A_2,q_2}\Vert \right) &\geq -\textrm{ln}(\rho^2)+\Big(-\textrm{ln}\frac{\Vert u^\infty_{A_1,q_1}-u^\infty_{A_2,q_2}\Vert_{L^2}}{\omega \rho}\Big)^\theta\cr
& \geq \frac{1}{2} \Big(-\textrm{ln}\frac{\Vert u^\infty_{A_1,q_1}-u^\infty_{A_2,q_2}\Vert_{L^2}}{\omega \rho}\Big)^\theta,
\end{align*}
for sufficiently small $\Vert u^\infty_{A_1,q_1}-u^\infty_{A_2,q_2}\Vert_{L^2}$ such that $ \Vert u^\infty_{A_1,q_1}-u^\infty_{A_2,q_2}\Vert_{L^2}\leq \omega\rho e^{-2(\textrm{ln}(\rho^2))^{\frac{1}{\theta}}}$.
Then, if we further suppose that $ \Vert u^\infty_{A_1,q_1}-u^\infty_{A_2,q_2}\Vert_{L^2}\leq e/(\omega\rho)$, then
\begin{align*}
\left( -\textrm{ln}\left(\Vert \mathcal{N}_{A_1,q_1} -\mathcal{N}_{A_2,q_2}\Vert \right)\right)^{-\frac{\sigma}{\sigma +3}}\leq 2^{\frac{\sigma (1-\theta)}{\sigma +3}} \left( -\textrm{ln}\left( {\Vert u^\infty_{A_1,q_1}-u^\infty_{A_2,q_2}\Vert_{L^2}}\right)\right)^{-\frac{\sigma \theta}{\sigma +3}}.
\end{align*}
Using the first inequality in Theorem \ref{T1.1} and choosing $\theta$ such that $\theta\frac{\sigma}{\sigma +3}=\frac{\sigma}{\sigma +3}-\epsilon$, where $0<\epsilon< \frac{\sigma}{\sigma +3}$, yield the first inequality of Theorem \ref{T1.3} related to $\Vert\mathrm{curl}A_1 - \mathrm{curl}A_2\Vert_{L^\infty (D)}$. The estimate for $\Vert q_1-q_2\Vert_{L^\infty (D)}$ is derived analogously.


\appendix

\section{Proof of Lemmas \ref{L4.2} and \ref{L4.4}}
\setcounter{equation}{0}
This Appendix is devoted to the proof of the Lemma \ref{L4.2}. At first, we recall the following three lemmas proved in \cite{texbook6} on the properties of the operator $N_\omega^{-1}$, $\omega\in  \sss +i\sss$, given by (\ref{4.2}). The first Lemma, due to Salo \cite{texbook25} in the reconstruction methods and similar to the one appearing in Eskin and Ralston \cite{texbook1} and Sun \cite{texbook9}, shows that a relation between a non-linear and linear Fourier transform of $\omega\cdot A$ for a vector field $A$.
\begin{lemma}\label{LA.1}
Let $\omega=\omega_1+i\omega_2$ with $\omega_1, \omega_2\in \sss$ and $\xi\in \Rt$, such that $\xi$, $\omega_1$ and $\omega_2$ be three mutually orthogonal vectors in $\R^3$. Let $A\in W^{2,\infty}(D)^3$ with $\mathrm{Supp}(A)\subset D$. Then we have the following equality
\begin{equation}\label{A.1}
\int_{\R^3}\omega\cdot A(x)e^{ix\cdot\xi}e^{iN^{-1}_\omega (-\omega \cdot A)(x)}dx =\int_{\R^3} \omega \cdot A(x) e^{ix\cdot\xi }dx,
\end{equation}
where $A$ is extended by $0$ outside $D$.
\end{lemma}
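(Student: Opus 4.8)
The plan is to exploit the defining property of the operator $N^{-1}_\omega$, namely that $\varphi := N^{-1}_\omega(-\omega\cdot A)$ solves $\omega\cdot\nabla\varphi = -\omega\cdot A$ (one of the mapping properties of $N^{-1}_\omega$ recalled from \cite{texbook6}), together with the orthogonality hypothesis. First I would note that, since $\xi$, $\omega_1$ and $\omega_2$ are mutually orthogonal,
$$
\omega\cdot\xi = \omega_1\cdot\xi + i\,\omega_2\cdot\xi = 0.
$$
Subtracting the right-hand side of \eqref{A.1} from the left-hand side, it then suffices to prove that
$$
I:=\int_{\Rt}\omega\cdot A(x)\,e^{ix\cdot\xi}\big(e^{i\varphi(x)}-1\big)\,dx = 0.
$$

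The key algebraic step is to recognize the integrand as an $\omega$-directional derivative. Writing $\omega\cdot A = -\omega\cdot\nabla\varphi$ and setting $\Psi := -i\,e^{i\varphi}-\varphi$, the chain rule gives $\omega\cdot\nabla\Psi = (e^{i\varphi}-1)\,\omega\cdot\nabla\varphi$, hence $\omega\cdot A\,(e^{i\varphi}-1) = -\omega\cdot\nabla\Psi$. Crucially, $\omega\cdot\nabla\Psi = \omega\cdot A\,(1-e^{i\varphi})$ is supported in $D$, even though $\Psi$ itself is not compactly supported. Integrating over a large ball $B_R\supset\overline{D}$ and applying the divergence theorem to the field $\Psi\,e^{ix\cdot\xi}\,\omega$, I would obtain
$$
I = -\int_{B_R}e^{ix\cdot\xi}\,\omega\cdot\nabla\Psi\,dx = \int_{B_R}\Psi\,\big(i\,\omega\cdot\xi\big)e^{ix\cdot\xi}\,dx - \int_{\p B_R}\Psi\,e^{ix\cdot\xi}(\omega\cdot\nu)\,\ds(x).
$$
The volume term vanishes because $\omega\cdot\xi=0$, so $I$ equals the boundary term alone, for every $R$ large enough.

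It remains to show that this boundary term tends to $0$ as $R\to\infty$, and this is where I expect the main difficulty to lie. Here I would use two observations. First, the same identity $\omega\cdot\xi=0$ yields $\int_{\p B_R}e^{ix\cdot\xi}(\omega\cdot\nu)\,\ds(x)=0$, so I may replace $\Psi$ by $\Psi+i$ in the boundary integral for free. Second, a Taylor expansion exhibits the decisive cancellation
$$
\Psi + i = -i\big(e^{i\varphi}-1-i\varphi\big) = \tfrac{i}{2}\varphi^2 + O(\varphi^3),
$$
so that $\abs{\Psi+i}\le C\abs{\varphi}^2$ once $\varphi$ is bounded. Eliminating the leading $O(\varphi)$ contribution is essential, because the mapping properties of $N^{-1}_\omega$ recalled from \cite{texbook6, texbook25} give that $\varphi$ is bounded, is supported in the slab of planes parallel to $\mathrm{span}(\omega_1,\omega_2)$ that meet $D$ (as $N^{-1}_\omega$ acts as a two-dimensional Cauchy transform in that plane), and decays only like $\seq{x}^{-1}$ within it. Consequently $\abs{\Psi+i}\le C\seq{x}^{-2}$ on the relevant band of $\p B_R$, whose surface area grows merely like $R$; the boundary integral is therefore $O(R^{-1})$ and vanishes in the limit, giving $I=0$ and hence \eqref{A.1}. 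The delicate point is precisely this quadratic decay: a crude $O(\seq{x}^{-1})$ bound on $\varphi$ would leave an $O(1)$ boundary contribution, so one must combine the algebraic cancellation in $\Psi+i$ with the sharp decay of $\varphi$ supplied by the properties of $N^{-1}_\omega$.
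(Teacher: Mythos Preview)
The paper does not supply its own proof of Lemma~\ref{LA.1}; it merely recalls the statement from \cite{texbook6} (with attributions to \cite{texbook25,texbook1,texbook9}) and uses it as a black box. So there is no in-paper argument to compare against.

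Your proposal is correct, and it is essentially the standard argument from those references, carried out directly in three dimensions rather than slice-by-slice in the $\mathrm{span}(\omega_1,\omega_2)$-planes. The two substantive points are both handled properly:
\begin{itemize}
\item The primitive $\Psi=-ie^{i\varphi}-\varphi$ satisfies $\omega\cdot\nabla\Psi=(e^{i\varphi}-1)\,\omega\cdot\nabla\varphi$, and this derivative is compactly supported even though $\Psi$ is not; combined with $\omega\cdot\xi=0$, integration by parts reduces $I$ to a pure boundary term.
\item The subtraction of the constant $-i$ via the auxiliary identity $\int_{\partial B_R}e^{ix\cdot\xi}(\omega\cdot\nu)\,\ds=0$ is exactly the right move: it promotes the naive $O(\varphi)=O(R^{-1})$ boundary integrand to $|\Psi+i|=O(\varphi^2)=O(R^{-2})$, which against a band of area $O(R)$ gives $O(R^{-1})\to0$. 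Without this cancellation the argument would stall at $O(1)$, as you correctly flag.
\end{itemize}
The only implicit input you invoke beyond what the paper states is the $O(|z|^{-1})$ decay of $\varphi$ in the complex plane defined by $\omega$, which follows from the Cauchy-transform representation of $N_\omega^{-1}$; this is indeed part of the standard toolbox in \cite{texbook25,texbook9} and is a fair citation. Your proof is complete.
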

\begin{lemma}\label{LA.2}
Let $g\in W^{n,\infty}(\Rt)$, $n\geq 0$, with $\textrm{Supp}(g)\subseteq D$. Then $N^{-1}_\omega (g)\in W^{n,\infty}(\R^3)$ and satisfies
$$ \norm{N_\omega^{-1}(g)}_{W^{n,\infty}(\R^3)} \leq C\norm{g}_{W^{n,\infty}(D)},$$
where $C$ depends only on $D$.
\end{lemma}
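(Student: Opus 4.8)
The plan is to identify $N_\omega^{-1}$ with a fiberwise solid Cauchy transform and then to reduce every bound to the elementary local integrability of the planar kernel $1/|z|$. First I would exploit the hypothesis $\omega_1,\omega_2\in\sss$ with $\omega_1\cdot\omega_2=0$ to set up adapted coordinates: completing $(\omega_1,\omega_2)$ to an orthonormal basis $(\omega_1,\omega_2,\omega_3)$ of $\R^3$ and writing $y_j=\omega_j\cdot x$, the chain rule gives $\omega\cdot\nabla=\partial_{y_1}+i\partial_{y_2}=2\partial_{\bar z}$ in the transverse complex variable $z=y_1+iy_2$, with $y_3$ acting merely as a parameter. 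Thus $N_\omega$ is, up to the factor $2$, the Cauchy--Riemann operator $\bar\partial$ in the plane spanned by $\omega_1,\omega_2$, which makes precise the ``$\bar\partial$ interpretation'' mentioned after \eqref{4.2}.

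Next I would check that the multiplier formula \eqref{4.2} coincides with convolution against the fundamental solution of $\bar\partial$, namely
$$
N_\omega^{-1}g(z,y_3)=\frac{1}{2\pi}\int_{\C}\frac{g(\zeta,y_3)}{z-\zeta}\,dA(\zeta),
$$
since $\partial_{\bar z}\big(1/(\pi z)\big)=\delta_0$ and this representation selects exactly the solution whose Fourier transform is $\widehat g(\xi)/(\omega\cdot\xi)$. Because $\mathrm{Supp}(g)\subset D$, the $\zeta$-integration runs over the bounded projection $\Omega$ of $D$ onto the $(\omega_1,\omega_2)$-plane, and the rearrangement bound $\int_{\Omega}|z-\zeta|^{-1}\,dA(\zeta)\le C\,\mathrm{diam}(D)$ yields
$$
\norm{N_\omega^{-1}g}_{L^\infty(\R^3)}\le C\,\mathrm{diam}(D)\,\norm{g}_{L^\infty(D)}.
$$
Since orthogonal changes of variables are isometries, $\Omega$ has diameter at most $\mathrm{diam}(D)$ independently of the choice of $\omega_1,\omega_2$, so the constant depends only on $D$ and not on $\omega$; this settles the case $n=0$.

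For $n\ge 1$ I would bootstrap using the fact that $N_\omega^{-1}$ is a Fourier multiplier (equivalently a convolution operator in the transverse variables with $y_3$ as a parameter), hence commutes with every constant-coefficient derivative: $\partial^\alpha N_\omega^{-1}g=N_\omega^{-1}(\partial^\alpha g)$ for $\abs{\alpha}\le n$. For the transverse directions this is the commutation of $\bar\partial^{-1}$ with $\partial_{y_1},\partial_{y_2}$, while for the parameter direction $\partial_{y_3}$ simply passes under the integral sign. Each $\partial^\alpha g$, $\abs{\alpha}\le n$, is an $L^\infty$ function supported in $\overline D$, so applying the $n=0$ estimate to $\partial^\alpha g$ and summing over $\abs{\alpha}\le n$ gives $\norm{N_\omega^{-1}g}_{W^{n,\infty}(\R^3)}\le C\norm{g}_{W^{n,\infty}(D)}$ with $C=C(D)$.

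The main obstacle I anticipate is the rigorous justification of the passage between the singular multiplier \eqref{4.2} and the Cauchy-transform representation: the symbol $1/(\omega\cdot\xi)$ is singular on the codimension-two set $\{\omega_1\cdot\xi=\omega_2\cdot\xi=0\}$, so one must argue by density (first for Schwartz $g$, using that $1/(\pi z)$ is locally integrable and tempered) and interpret the commutation identity as an identity of tempered distributions. Once the convolution representation is secured, every estimate is a one-line consequence of the planar bound $\int_{B(0,r)}\abs{\eta}^{-1}\,dA(\eta)=2\pi r$, and the uniformity of the constant in $\omega$ follows from the isometry invariance of diameters.
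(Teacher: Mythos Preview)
Your argument is correct. Note, however, that the paper does not supply its own proof of this lemma: it is one of the three results \emph{recalled} at the beginning of Appendix~A from Tzou~\cite{texbook6}, so there is no in-paper argument to compare against. Your route---adapted orthonormal coordinates reducing $N_\omega$ to $2\partial_{\bar z}$, the convolution representation with the planar Cauchy kernel $\tfrac{1}{2\pi(z-\zeta)}$, the $L^\infty$ bound coming from local integrability of $|\eta|^{-1}$ over the bounded projection of $D$, and commutation of the convolution with constant-coefficient derivatives for $n\ge 1$---is the standard one and is essentially what appears in the cited references (see e.g.\ \cite{texbook25,texbook9,texbook6}). The technical points you flag (interpreting $1/(\omega\cdot\xi)$ as a locally integrable, hence tempered, function on $\R^3$ thanks to its codimension-two singular set, and justifying $\partial^\alpha N_\omega^{-1}g=N_\omega^{-1}\partial^\alpha g$ first for Schwartz $g$ and then by density) are routine and you have identified them correctly.
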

Finally, we have the following result which gives the dependence of $N^{-1}_\omega(-\omega \cdot A)$ on the parameter $\omega$.
\begin{lemma}\label{LA.3}
Let $A\in W^{2,\infty}(D)$ with  $\mathrm{Supp}(A)\subset D$ such that $\Vert A\Vert_{W^{2,\infty}} \leq M$ and let $\theta,\,\theta' \in \sss +i\sss$ such that $\vert \theta-\theta'\vert <1$ such that $\Re(\theta) \cdot \Im(\theta)=\Re(\theta') \cdot \Im(\theta')=0$. Then, we have the following inequality
\begin{equation}\label{A.2}
\Vert N^{-1}_{\theta}(-\theta \cdot A)-N^{-1}_{\theta'}(-\theta'\cdot A)\Vert_{L^\infty (D)}\leq C \abs{\theta- \theta'},
\end{equation}
where $C$ depends only in $D$ and $M$.
\end{lemma}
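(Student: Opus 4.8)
The plan is to isolate the two distinct sources of $\theta$-dependence in $N^{-1}_\theta(-\theta\cdot A)$: the amplitude $-\theta\cdot A$ and the operator $N^{-1}_\theta$ itself. First I would write
\begin{align*}
N^{-1}_\theta(-\theta\cdot A)-N^{-1}_{\theta'}(-\theta'\cdot A)
&= N^{-1}_\theta\big(-(\theta-\theta')\cdot A\big)\\
&\quad +\big(N^{-1}_\theta-N^{-1}_{\theta'}\big)(-\theta'\cdot A)
=: T_1+T_2 .
\end{align*}
The term $T_1$ is harmless: since $(\theta-\theta')\cdot A$ is supported in $D$ with $\norm{(\theta-\theta')\cdot A}_{L^\infty(D)}\le\abs{\theta-\theta'}\,\norm{A}_{L^\infty}\le M\abs{\theta-\theta'}$, Lemma \ref{LA.2} with $n=0$ gives directly $\norm{T_1}_{L^\infty(D)}\le C\abs{\theta-\theta'}$, with $C$ depending only on $D$ and $M$. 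The whole difficulty is concentrated in the operator-difference term $T_2$.

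For $T_2$ I would pass to the physical-space representation of $N^{-1}_\theta$. Fixing the orthonormal frame $(\theta_1,\theta_2,\theta_3)$ with $\theta_3=\theta_1\times\theta_2$ and $\theta=\theta_1+i\theta_2$, the operator $N_\theta=\theta\cdot\nabla$ is twice the $\bar\partial$-operator in the plane $P_\theta=\mathrm{span}(\theta_1,\theta_2)$, so
$$
N^{-1}_\theta f(x)=\frac{1}{2\pi}\int_{P_\theta}\frac{f(x-y)}{\theta\cdot y}\,\mathrm d\sigma(y),
$$
and the crucial elementary identity $\abs{\theta\cdot y}=\abs{y}$ for $y\in P_\theta$ makes the kernel exactly $\frac{1}{2\pi}\,\frac{e^{-i\psi}}{y_n+iy_t}$ in orthonormal plane coordinates $y=y_n\hat n+y_t\hat t$, which is locally integrable. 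I would then invoke the resolvent identity $T_2=N^{-1}_\theta\big((\theta'-\theta)\cdot\nabla\varphi'\big)$ with $\varphi':=N^{-1}_{\theta'}(-\theta'\cdot A)$; by Lemma \ref{LA.2} with $n=2$ the phase $\varphi'$ is bounded in $W^{2,\infty}(\Rt)$ by $CM$, and it is supported in the slab $\{\,\abs{x\cdot\theta_3'}\lesssim\mathrm{diam}(D)\,\}$, decaying in the in-plane directions of $P_{\theta'}$. Splitting the integration over $P_\theta$ into a near zone $\abs{y}\le R$ (with $R\sim\mathrm{diam}(D)$) and a far zone $\abs{y}>R$, the near zone is controlled by $\int_{\abs{y}\le R}\abs{y}^{-1}\mathrm d\sigma=2\pi R$ times $\norm{(\theta'-\theta)\cdot\nabla\varphi'}_{L^\infty}\le CM\abs{\theta-\theta'}$, which is acceptable.

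The main obstacle is the far zone, and it is genuinely the heart of the matter. A crude estimate by absolute values there only yields the suboptimal bound $\abs{\theta-\theta'}\,\log\frac{1}{\abs{\theta-\theta'}}$: the gradient $(\theta'-\theta)\cdot\nabla\varphi'$ decays only like $\abs{\theta-\theta'}/\abs{y}$ along the normal direction (the $\bar\partial$-equation constrains only the $\theta'$-directional derivative of $\varphi'$, not $\partial_{\theta_3'}\varphi'$), against a kernel of size $\abs{y}^{-1}$ over a strip of width $\sim\abs{\theta-\theta'}^{-1}$ coming from the slab support. To recover the clean linear bound one must exploit the cancellation carried by the oscillatory kernel — concretely, $\int_{\abs{y}=r,\,y\in P_\theta}(\theta\cdot y)^{-1}\mathrm d\ell=0$ and, equivalently, $\int_{\R}(y_n+iy_t)^{-1}\mathrm dy_t=\pi\,\mathrm{sgn}(y_n)$ is bounded rather than logarithmically divergent — and combine it with the anisotropic decay of $\nabla^2\varphi'$ in the in-plane directions. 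A second bookkeeping point to track is that the singular directions $\R\theta_3$ and $\R\theta_3'$ of the two kernels are separated by an angle comparable to $\abs{\theta-\theta'}$, so that the two singularities never reinforce. Once the far-zone integral is shown to be $O(\abs{\theta-\theta'})$, adding the contributions of $T_1$ and of the near and far zones of $T_2$ yields the stated estimate with $C$ depending only on $D$ and $M$.
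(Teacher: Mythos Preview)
The paper does not actually prove this lemma; it is quoted from Tzou \cite{texbook6} without proof, so there is no in-paper argument to compare against. I will therefore comment on the soundness of your proposal on its own terms.

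Your decomposition $T_1+T_2$ and the treatment of $T_1$ are fine. The gap is in $T_2$: after the resolvent identity you are applying $N^{-1}_\theta$ to $(\theta'-\theta)\cdot\nabla\varphi'$, which is \emph{not} compactly supported, and you correctly identify that a crude bound in the far zone only gives $|\theta-\theta'|\log(1/|\theta-\theta'|)$. You then assert that cancellation in the Cauchy kernel, together with anisotropic decay of $\nabla^2\varphi'$, upgrades this to $O(|\theta-\theta'|)$, but you never carry this out---you only describe the ingredients and then write ``once the far-zone integral is shown to be $O(|\theta-\theta'|)$''. That is precisely the step that requires work, and as written it is not a proof.

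More to the point, the resolvent-identity route is a detour that \emph{creates} the far-zone difficulty. A direct argument avoids it entirely. Using your own physical-space formula, write both operators with the \emph{same} planar kernel:
\[
N^{-1}_\omega f(x)=\frac{1}{2\pi}\int_{\R^2}\frac{f(x-s\,\Re\omega-t\,\Im\omega)}{s+it}\,ds\,dt,
\]
so that for $f=-\theta'\cdot A$ (compactly supported in $D$) one has
\[
T_2(x)=\frac{1}{2\pi}\int_{\R^2}\frac{f(x-s\theta_1-t\theta_2)-f(x-s\theta'_1-t\theta'_2)}{s+it}\,ds\,dt.
\]
For $x\in D$ both terms in the numerator vanish once $\sqrt{s^2+t^2}>\mathrm{diam}(D)$, so the integral is over a fixed disc; on that disc the numerator is bounded by $\|\nabla f\|_{L^\infty}\,(|s|+|t|)\,|\theta-\theta'|$, which exactly cancels the $|s+it|^{-1}$ singularity and yields $|T_2(x)|\le C(D)\,\|A\|_{W^{1,\infty}}\,|\theta-\theta'|$. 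No far zone, no cancellation argument, and only $W^{1,\infty}$ regularity of $A$ is used at this step.
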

\begin{proof}[Proof of Lemma \ref{L4.2}]
By using (\ref{4.8}), we have for $j=1,2$
\begin{equation}\label{A.3}
\nabla  u_j=e^{ix\cdot\rr_j}(i(\rr_j+\nabla \varphi_j)e^{i\varphi_j}+i\rr_jr_j+\nabla r_j),
\end{equation}
where $\varphi_1$ and $\varphi_2$ are given by
\begin{equation}\label{A.4}
\varphi_1(x,\omega_1^*)=N_{\omega_1^*}^{-1}(\omega_1^*\cdot A_1),\quad \varphi_2(x,\omega_1^*)=N_{\omega_2^*}^{-1}(-\omega_2^*\cdot A_2),
\end{equation}
where $A_j$, $j=1,2$ are extended by $0$ outside $D$.\\
Therefore, direct calculation gives
\begin{equation}\label{A.5}
u_2 \nabla u_1 -u_1\nabla u_2 = i(\rr_1-\rr_2)e^{i(\varphi_1+\varphi_2)}e^{ix\cdot(\rr_1+\rr_2)}+\Psi_1 (x, \rr_1, \rr_2)+\Psi_2(x,\rr_1,\rr_2),
\end{equation}
where $\Psi_1$ and $\Psi_2$ are given by
\begin{align*}
\Psi_1(x,\rr_1,\rr_2)&=i(\rr_1-\rr_2)\left[r_1e^{i\varphi_2}+r_2e^{i\varphi_1}+r_1r_2\right]e^{ix\cdot(\rr_1+\rr_2)},\\
\Psi_2(x,\rr_1,\rr_2)&=\big[i(\nabla \varphi_1-\nabla \varphi_2)e^{i(\varphi_1+\varphi_2)}+(\nabla r_1e^{i\varphi_2}
-\nabla r_2 e^{i\varphi_1})\\
&\qquad\qquad+(ir_2\nabla \varphi_1e^{i\varphi_1}-ir_1\nabla \varphi_2e^{i\varphi_2})+(r\nabla r_1-r_1\nabla r_2)\big]e^{ix\cdot(\rr_1+\rr_2)}.
\end{align*}
Using Lemma \ref{LA.2} and the fact that $A_j$, $j=1,2$, is compactly supported in $\R^3$, we obtain
\begin{equation}\label{A.6}
\norm{\varphi_j(\cdot,\omega_j^*)}_{L^\infty(D)}\leq C\Vert A_j\Vert_{L^\infty(\R^3)}\leq CM. 
\end{equation} 
This implies that 
\begin{equation}\label{A.7}
\norm{\Psi_1(\cdot,\rr_1,\rr_2)}_{L^1(D)}\leq C.
\end{equation}
From (\ref{A.5}), we obtain
\begin{equation}\label{A.8}
i\int_{\Rt} A\cdot \left(  u_2\nabla u_1-u_1 \nabla u_2 \right) dx=\int_{\Rt} A\cdot (\rr_2-\rr_1)e^{ix\cdot\xi}e^{i(\varphi_1+\varphi_2)} dx+\mathcal{R}_1(\xi ,s),
\end{equation}
where 
\begin{equation}\label{A.9}
\mathcal{R}_1(\xi ,s)=i\int_{\Rt} A\cdot (\Psi_1(x,\rr_1,\rr_2)+\Psi_2(x,\rr_1,\rr_2))dx,\quad \xi =\rr_1+\rr_2.
\end{equation}
Let now compute the first integral in the right hand side of (\ref{A.8}). By using (\ref{4.6}) and (\ref{4.9}), we have for $\omega=\omega_1 +i\omega_2$
\begin{multline}\label{A.10}
\int_{\Rt} A\cdot (\rr_2-\rr_1)e^{ix\cdot\xi}e^{i(\varphi_1+ \varphi_2)} dx =2s\int_{\Rt}\overline{\omega}\cdot  Ae^{ix\cdot\xi}e^{i(\varphi_1+\varphi_2)}dx\cr
+2s\Big( \sqrt{1-\frac{\vert \xi \vert^2}{4s^2}}-1\Big) \int_{\Rt} \omega_1\cdot Ae^{ix\cdot\xi}e^{i(\varphi_1+\varphi_2)}dx,
\end{multline}
Let $\psi_1=N_{\overline{\omega}}^{-1}(\overline{\omega} \cdot A_1)$, and $\psi_2=N^{-1}_{\overline{\omega}} (-\overline{\omega}\cdot A_2)$, then we have 
$$
\psi_1(x)+\psi_2(x)=N^{-1}_{\overline{\omega}}(-\overline{\omega}\cdot (A_2-A_1))=N^{-1}_{\overline{\omega}}(-\overline{\omega}\cdot A).$$
We insert $e^{i(\psi_1+\psi_2)}$ in (\ref{A.10}), then we have
\begin{equation}\label{A.11}
\int_{\Rt} A(x)\cdot (\rr_2-\rr_1)e^{ix\cdot\xi}e^{i(\varphi_1+ \varphi_2)} dx 
=\mathcal{J}(\xi, s)+\mathcal{R}_2(\xi, s)+\mathcal{R}_3(\xi, s),
\end{equation}
where
\begin{align*}
\mathcal{J}(\xi, s)&=2s\int_{\Rt}\overline{\omega}\cdot A(x)e^{ix\cdot\xi}e^{i(\psi_1+\psi_2)}dx, \\
\mathcal{R}_2(\xi, s)&=2s \int_{\Rt}\overline{\omega}\cdot A(x)e^{ix\cdot\xi}\left(e^{i(\varphi_1+\varphi_2)}- e^{i(\psi_1+\psi_2)}\right) dx, \\
\mathcal{R}_3(\xi, s)&=2s\Big( \sqrt{1-\frac{\vert \xi \vert^2}{4s^2}}-1\Big) \int_{\Rt} \omega_1\cdot A(x)e^{ix\cdot\xi}e^{i(\varphi_1+\varphi_2)}dx.
\end{align*}
By using the Lemma \ref{LA.1}, we obtain 
\begin{equation}\label{A.12}
\mathcal{J}(\xi, s)=2s\int_{\R^3} \overline{\omega} \cdot A(x) e^{ix\cdot \xi}dx.
\end{equation}
On the other hand, we have
\begin{align}\label{A.13}
\mathcal{R}_2(\xi,s)=-2s\int_\Rt e^{-ix\cdot\xi}\overline{\omega}\cdot A\Big(e^{i\varphi_1}\big(e^{i\varphi_2}-e^{i\psi_2}\big)-e^{i\psi_2}\big(e^{i\psi_1}-e^{i\varphi_1}\big)\Big)dx.
\end{align}
Using the dependence of $N^{-1}_{\omega}(-\omega\cdot A)$ on the parameter $\omega$ given in Lemma \ref{LA.3} and the fact that $\textrm{Supp}A\subset D$, we get
\begin{align}\label{A.14}
\abs{e^{i\varphi_2}-e^{i\psi_2}}&\leq C\abs{N_{\omega_2^*}^{-1}(-\omega_2^*\cdot A_2)-N_{\overline{\omega}}^{-1}(-\overline{\omega}\cdot A_2)}\leq C\abs{\omega_2^*-\overline{\omega}},\cr
\abs{e^{i\psi_1}-e^{i\varphi_1}}&\leq C\abs{N_{\overline{\omega}}^{-1}(\overline{\omega}\cdot A_1)-N_{-\omega_1^*}^{-1}(-\omega_1^*\cdot A_1)}\leq C\abs{\overline{\omega}+\omega_1^*}.
\end{align}
Taking into account (\ref{A.13}), (\ref{A.14}) and using that $1-\sqrt{1-\vert\xi\vert^2/s^2}\leq\vert\xi\vert^2/4s^2$, for all $\vert\xi\vert\leq 2s$, we conclude
\begin{equation}\label{A.16}
\big\vert\mathcal{R}_2(\xi,s)\big\vert\leq Cs\frac{\vert\xi\vert^2}{4s^2}\leq C\vert\xi\vert.
\end{equation}
By the same way, we find $\vert\mathcal{R}_3(\xi,s)\vert\leq C\vert\xi\vert$, for some positive constant which is independent of $\xi$ and $s$.\\
The proof is completed.
\end{proof}
\begin{proof}[Proof of Lemma \ref{L4.4}]
By a direct calculation, we have
\begin{equation}\label{A.17}
u_1u_2=
e^{ix\cdot\xi}e^{i(\varphi_1+\varphi_2)}+e^{ix\cdot\xi}\big(e^{i\varphi_{2}}r_{1}
+e^{i\varphi_{1}}r_{2}+r_1r_2\big).
\end{equation}
We use the identity (\ref{A.17}) and we insert $e^{ix\cdot\xi} q(x)$, we obtain
\begin{equation}\label{A.18}
\int_{D}e^{-i\varphi}q(x)u_{1}u_2 dx= \int_{D}
q(x)\;e^{ix\cdot\xi}\,dx+\mathcal{R}'_{1}(\xi,s)+\mathcal{R}'_{2}(\xi,s),
\end{equation}
where
\begin{align*}
\mathcal{R}'_{1}(\xi,s)&=\int_{D}q(x)\;e^{ix\cdot\xi}\,e^{i\varphi_{1}}
\para{e^{i(\varphi_{2}-\varphi)}-e^{-i\varphi_{1}}}dx,\cr
\mathcal{R}'_{2}(\xi,s)&=\int_{D}e^{-i\varphi}q(x)e^{ix\cdot\xi}\big(e^{i\varphi_{2}}r_{1}
+e^{i\varphi_{1}}r_{2}+r_1r_2\big)dx.
\end{align*}
Let $\psi_{3}=N^{-1}_{\omega_{2}^{*}}(-\omega_{2}^{*}\cdot A_{1}).$ We insert $e^{i\psi_{3}}$ in  $\mathcal{R}'_{1}$ and obtain from Lemmas \ref{LA.2} and \ref{LA.3}
\begin{align}\label{A.19}
\abs{\mathcal{R}'_{1}(\xi,s)}&\leq C\norm{e^{i(\varphi_{2}-\varphi)}-e^{i\psi_{3}}}_{L^{\infty}(D)}+\norm{e^{i\psi_{3}}-e^{-i\varphi_{1}}}_{L^{\infty}(B)}\cr
&\leq  C\big(\norm{N^{-1}_{\omega_{2}^{*}}(-\omega_{2}^{*}\cdot
(A_{2}+\nabla\varphi))-N^{-1}_{\omega_{2}^{*}}(-\omega_{2}^{*}\cdot
A_{1})}_{L^{\infty}(B)}\cr
&\qquad \qquad +\norm{N^{-1}_{\omega_{2}^{*}}(-\omega_{2}^{*}\cdot
A_{1})-N^{-1}_{-\omega_{1}^{*}}(\omega_{1}^{*}\cdot
A_{1})}_{L^{\infty}(B)}\big)\cr 
&\leq C\para{\norm{A+\nabla\varphi}_{L^{\infty}(B)}+\abs{\omega_{2}^*+\omega_{1}^*}}\cr
&\leq  C\para{\norm{A+\nabla\vartheta}_{L^{\infty}(B)}+\norm{\vartheta}_{W^{1,\infty}(B\backslash B')}+\abs{\omega_{2}^*+\omega_{1}^*}}.
\end{align}
Using (\ref{AD12}) and (\ref{AD13}), we obtain 
\begin{equation}\label{A.20}
\abs{\mathcal{R}'_{1}(\xi,s)} \leq  C\para{\norm{\mathrm{curl}(A)}_{L^{\infty}(B)}+s\seq{\xi}^{-1}}.
\end{equation}
Moreover from (\ref{4.9}), we get $\abs{\mathcal{R}'_{2}(\xi,s)}\leq C s^{-1}$. Collecting this with (\ref{A.20}) and (\ref{A.18}) we obtain the desired result.
\end{proof}
\section{Proof of Lemma \ref{L4.4.}}

This appendix is devoted to the proof of Lemma \ref{L4.4.}.  We first establish the following duality result. The coefficients $A$ and $q$ are as in Section \ref{sec2}.
\begin{lemma}\label{L.I}
Let $v_i\in H^1(B)$, $i=1,2$ such that $\Delta v_i +k^2v_i=0$ in $B$. Then, the following identity 
\begin{equation}
\int_B Q_{A,q}v_1 (I -T_{-A,q})^{-1}v_2dx= \int_B Q_{-A,q}v_2 (I -T_{A,q})^{-1}v_1dx,
\end{equation}
holds where $Q_{A,q}$ and $T_{A,q}$ are given by \eqref{1.2} and \eqref{1.9} respectively.
\end{lemma}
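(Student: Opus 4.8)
The plan is to exploit the definition of the operators $T_{A,q}$ and $T_{-A,q}$ as convolutions against the Helmholtz Green function $\Phi$, together with the symmetry $\Phi(x,y)=\Phi(y,x)$, to transfer the resolvents across the pairing. First I would set $w_1:=(I-T_{A,q})^{-1}v_1$ and $w_2:=(I-T_{-A,q})^{-1}v_2$, so that by the Lippmann--Schwinger equation \eqref{lipmann} these are exactly the total fields solving $w_1-T_{A,q}w_1=v_1$ and $w_2-T_{-A,q}w_2=v_2$ in $H^1(D)$, with associated scattered fields $w_1^s:=T_{A,q}w_1$ and $w_2^s:=T_{-A,q}w_2$ lying in $H^2_{\mathrm{loc}}(\R^3)$ and satisfying the radiation condition \eqref{1.5}, together with $-\Delta w_j^s-k^2w_j^s=Q_{\pm A,q}w_j$ in $\R^3$ as in \eqref{sca}. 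The identity to prove then reads $\int_B Q_{A,q}v_1\,w_2\,dx=\int_B Q_{-A,q}v_2\,w_1\,dx$, and the natural route is to relate both sides to a common symmetric bilinear expression in $w_1,w_2$.

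The key step is a Green's-identity computation on the ball $B$ exactly analogous to \eqref{2.13}: since $w_1$ solves $\mathcal{H}_{A,q}w_1=k^2w_1$ and $w_2$ solves $\mathcal{H}_{-A,q}w_2=k^2w_2$ in $D$ (equivalently in $B$, where the potentials are supported), I would apply
\begin{equation}
\int_B\para{\mathcal{H}_{A,q}w_1\,w_2-w_1\,\mathcal{H}_{-A,q}w_2}\,dx=\int_{\partial B}\para{w_2\,\p_r w_1-w_1\,\p_r w_2}\ds(x),
\end{equation}
noting that the role of $A$ and $-A$ is precisely what makes the transpose pairing in \eqref{2.13} available here. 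The left-hand side collapses to $k^2\int_B(w_1w_2-w_1w_2)\,dx=0$ minus the contributions of the zeroth-order parts, so after using the relations $\mathcal{H}_{A,q}=-\Delta-Q_{A,q}$ and $\mathcal{H}_{-A,q}=-\Delta-Q_{-A,q}$ from \eqref{1.1}, the volume term reduces to $\int_B(Q_{-A,q}w_2\,w_1-Q_{A,q}w_1\,w_2)\,dx$. The boundary term vanishes by \eqref{2.6} applied to the radiating scattered parts $w_1^s,w_2^s$, after splitting $w_j=v_j+w_j^s$ and observing that $v_1,v_2$ are entire Helmholtz solutions so their contribution to the boundary integral is handled by the standard Green/Rellich argument.

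This yields $\int_B Q_{A,q}w_1\,w_2\,dx=\int_B Q_{-A,q}w_2\,w_1\,dx$; the remaining task is to replace one $w_j$ by the corresponding $v_j$ on each side. For this I would integrate by parts (as in the passage from \eqref{hit1} to \eqref{hit2} in the proof of Lemma \ref{L3.2}) using that $Q_{A,q}$ is the formal transpose, up to sign on the vector potential, of $Q_{-A,q}$: concretely $\int_B Q_{A,q}\phi\,\psi\,dx=\int_B \phi\,Q_{-A,q}\psi\,dx$ for $\phi,\psi\in H^1(B)$ with the coefficients supported in $D$, since $\int_B (i\,\mathrm{div}(A\phi)\psi+iA\cdot\nabla\phi\,\psi)\,dx=\int_B\phi\,(i\,\mathrm{div}(A\psi)+iA\cdot\nabla\psi)\,dx$ after integrating the divergence terms by parts with no boundary contribution. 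Combining this transpose relation with the scattered-field identity from the previous paragraph lets me swap $w_j^s$ for the incident parts and arrive at $\int_B Q_{A,q}v_1\,w_2\,dx=\int_B Q_{-A,q}v_2\,w_1\,dx$, which is the claim.

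I expect the main obstacle to be the careful bookkeeping of the boundary term at $\partial B$: one must verify that after separating incident and scattered parts, every surface integral that survives either vanishes by \eqref{2.6} (for the radiating pieces) or cancels against its counterpart (for the cross terms between entire and radiating solutions). The algebraic transpose identity for $Q_{A,q}$ is routine, but ensuring the signs of the magnetic terms match the switch $A\leftrightarrow -A$ throughout — and that no uncontrolled boundary contribution from $\nabla\vartheta$-type gradients appears since $\mathrm{Supp}(A)\subset D\Subset B$ — is where the argument must be handled with precision.
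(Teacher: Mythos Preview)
Your approach has a genuine gap, and it stems from applying \eqref{2.13} to the \emph{total} fields $w_1,w_2$ rather than to the scattered fields. Because $w_1$ and $w_2$ already satisfy $\mathcal{H}_{A,q}w_1=k^2w_1$ and $\mathcal{H}_{-A,q}w_2=k^2w_2$ in $B$, the left side of \eqref{2.13} is identically $0$, so all you obtain is that the boundary integral $\int_{\partial B}(w_1\partial_r w_2-w_2\partial_r w_1)\,\ds=0$. Your claimed ``volume term'' $\int_B(Q_{-A,q}w_2\,w_1-Q_{A,q}w_1\,w_2)\,dx$ is not a residual of this computation: it is the transpose identity itself, which holds for \emph{any} $w_1,w_2\in H^1(B)$ by integration by parts since $\mathrm{Supp}(A)\subset D\Subset B$. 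So the first paragraph of your argument derives a tautology, and contains no information about the scattering problem. The step where the radiating structure actually enters --- the cross-terms $\int_{\partial B}(v_i\,\partial_r w_j^s-w_j^s\,\partial_r v_i)\,\ds$ in your boundary splitting --- is precisely what you leave unaddressed; these do \emph{not} vanish individually, and computing them via Green's formula inside $B$ is exactly where the target identity is hidden.

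The paper's proof avoids this circularity by applying \eqref{2.13} together with \eqref{2.6} to the \emph{scattered} fields $w_1^s,w_2^s$. Since these are radiating, \eqref{2.6} kills the boundary term cleanly; and since they satisfy $\mathcal{H}_{A,q}w_1^s=k^2w_1^s+Q_{A,q}v_1$ (an \emph{inhomogeneous} equation), the volume integral produces the nontrivial relation $\int_B\big(Q_{A,q}v_1\,w_2^s-Q_{-A,q}v_2\,w_1^s\big)\,dx=0$. Writing $w_j^s=w_j-v_j$ and invoking the transpose identity once, on the $(v_1,v_2)$ terms only, then yields $\int_B Q_{A,q}v_1\,w_2\,dx=\int_B Q_{-A,q}v_2\,w_1\,dx$. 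Your ingredients can be reassembled into a correct proof along these lines, but as written the argument is circular: you need to apply Green's identity to the scattered fields (or, equivalently, actually compute the mixed boundary terms you dismissed), not to the total fields.
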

\begin{proof}
We associate to $v_1$ (respectively $v_2$) a total field  $u_{A,q}$ (respectively $u_{-A,q}$) and a scattered field $u^s_{-A,q}$ (respectively $u^s_{-A,q}$). Applying (2.33) to $u_1=u^s_{A,q}$ and $u_2=u^s_{-A,q}$ implies
$$\int_{B}\para{\mathcal{H}_{A,q} u^s_{A,q} \, u^s_{-A,q}-u^s_{A,q}\mathcal{H}_{-A,q} u^s_{-A,q}}dx=0.$$
Making use of $u_{A,q}=u^s_{A,q}+v_1$  and $u_{-A,q}=u^s_{-A,q}+v_2$, it follows by direct calculation
\begin{align*}
\int_B \left(Q_{A,q}v_1u_{-A,q}-Q_{-A,q}v_2\right) dx=\int_B \left( Q_{A,q}v_1v_2-Q_{-A,q}v_2v_1 \right) dx.
\end{align*}
Moreover, by integrating by parts, we get 
$$\int_B\left( Q_{A,q}v_1v_2-Q_{-A,q}v_2v_1 \right)dx=0. $$
Due to $u_{A,q}=(I-T_{A,q})^{-1}v_1$ and $u_{-A,q}=(I-T_{-A,q})^{-1}v_2$, we obtain the desired result.
\end{proof}
\begin{proposition}\label{P1}
For $k>0$ fixed, we have 
\begin{align}
u^s_{A,q}(x,y)&=\frac{1}{4\pi} \frac{e^{ik\vert x\vert}}{\vert x\vert} \frac{e^{ik\vert y\vert}}{\vert y\vert} u^\infty_{A,q}\left( \widehat{x},-\widehat{y} \right)+ \frac{1}{\vert x\vert \vert y\vert}\left(\frac{1}{\vert x\vert}+\frac{1}{\vert y\vert}\right) \Lambda(x,y), \hspace*{0.5cm}x\neq y,
\end{align}
where $\Lambda(x,y)$ is uniformly bounded as $\vert x\vert \longrightarrow \infty$ and $\vert y\vert \rightarrow \infty$.
\end{proposition}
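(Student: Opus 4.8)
The plan is to establish Proposition \ref{P1} as a \emph{mixed reciprocity} statement combined with two successive far field expansions, relying on the point source reciprocity of Lemma \ref{L3.1} and the uniform bounds of Corollary \ref{C2.3} rather than on any fine analysis of the Green function near the diagonal $x\sim y$. \textbf{Step 1 (expansion in $x$).} Fixing $y$, I would insert the standard uniform asymptotic $\Phi(x,t)=\frac{e^{ik|x|}}{4\pi|x|}e^{-ik\hat x\cdot t}+O(|x|^{-2})$, valid uniformly for $t$ in the bounded set $D$, into the representation \eqref{1.6} of $u^s_{A,q}(x,y)$. This produces $u^s_{A,q}(x,y)=\frac{e^{ik|x|}}{|x|}\,u^\infty_{A,q}(\hat x;y)+\Lambda_1(x,y)$, where $u^\infty_{A,q}(\hat x;y):=\frac{1}{4\pi}\int_D e^{-ik\hat x\cdot t}Q_{A,q}u_{A,q}(t,y)\,dt$ is the far field pattern in $\hat x$ of the field scattered by the point source at $y$, cf. \eqref{1.11}. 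The quantitative point is that, by Corollary \ref{C2.3}, $\|Q_{A,q}u_{A,q}(\cdot,y)\|_{L^2(D)}\le C\|\Phi(\cdot,y)\|_{H^1(D)}\le C|y|^{-1}$ for $y$ far from $D$, so that $|\Lambda_1(x,y)|\le C|x|^{-2}|y|^{-1}$.

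\textbf{Steps 2 and 4 (the two reciprocities).} Next I would identify this point source far field with a plane wave near field for the \emph{reversed} potential, namely $u^\infty_{A,q}(\hat x;y)=\frac{1}{4\pi}u^s_{-A,q}(y,-\hat x)$. This is exact and follows from Lemma \ref{L3.1}: writing $u^s_{A,q}(z,y)=u^s_{-A,q}(y,z)$ and letting $|z|\to\infty$, the left side yields $u^\infty_{A,q}(\hat z;y)$, while the incident point source $\Phi(\cdot,z)$ degenerates, by linearity and Corollary \ref{C2.3}, into $\frac{e^{ik|z|}}{4\pi|z|}$ times the plane wave of direction $-\hat z$, so the right side yields $\frac1{4\pi}u^s_{-A,q}(y,-\hat z)$; since neither side depends on $|z|$, the identity holds exactly. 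Applying the same double-limit argument to the reciprocity of Lemma \ref{L3.1} in both variables (and using the previous identity once more with $A$ replaced by $-A$) produces the plane wave far field reciprocity $u^\infty_{-A,q}(\hat y,-\hat x)=u^\infty_{A,q}(\hat x,-\hat y)$, again exactly. Alternatively, both identities can be derived directly from the Green identities \eqref{2.13}--\eqref{2.6} exactly as in Lemma \ref{L3.1}, the flip $A\mapsto -A$ arising from integrating the first order part of $Q_{A,q}$ by parts.

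\textbf{Step 3 and conclusion.} Finally I would expand $u^s_{-A,q}(y,-\hat x)$ as $|y|\to\infty$ using \eqref{1.10}, obtaining $u^s_{-A,q}(y,-\hat x)=\frac{e^{ik|y|}}{|y|}u^\infty_{-A,q}(\hat y,-\hat x)+\Lambda_3(y,\hat x)$ with $|\Lambda_3|\le C|y|^{-2}$ uniformly in $\hat x$, the uniformity again coming from the incident plane waves having uniformly bounded $H^1(D)$ norm together with Corollary \ref{C2.3}. Substituting Steps 2--4 into Step 1 gives $u^s_{A,q}(x,y)=\frac1{4\pi}\frac{e^{ik|x|}}{|x|}\frac{e^{ik|y|}}{|y|}u^\infty_{A,q}(\hat x,-\hat y)+\frac{e^{ik|x|}}{4\pi|x|}\Lambda_3+\Lambda_1$, and the two remainders are bounded by $C|x|^{-1}|y|^{-2}$ and $C|x|^{-2}|y|^{-1}$ respectively, whose sum equals $\frac{1}{|x||y|}\big(\frac{1}{|x|}+\frac{1}{|y|}\big)$ times a bounded quantity. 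This defines the bounded remainder $\Lambda(x,y)$ and proves the claim.

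\textbf{Main obstacle.} I expect the delicate part to be the uniform control of the remainders together with the sign bookkeeping. One must verify that the $O(|x|^{-2})$ in Step 1 is uniform over $D$ while carrying the factor $|y|^{-1}$ coming from the decay of the source, and that the far field remainder in Step 3 is uniform over the incident directions $-\hat x$, so that the two error contributions assemble precisely into the anisotropic bound $\frac{1}{|x||y|}\big(\frac1{|x|}+\frac1{|y|}\big)$. Establishing the two reciprocity relations with the correct reversal $A\mapsto -A$ (and the matching $d\mapsto -d$, $\hat x\mapsto -\hat x$) is the other point requiring care, although it is a direct transcription of Lemma \ref{L3.1}.
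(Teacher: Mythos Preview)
Your proposal is correct and follows essentially the same route as the paper: expand in $x$ via the representation formula, identify the resulting coefficient as the plane-wave scattered field $u^s_{-A,q}(y,-\hat x)$ for the reversed potential, expand this in $y$, and close with the far-field reciprocity $u^\infty_{-A,q}(\hat y,-\hat x)=u^\infty_{A,q}(\hat x,-\hat y)$, controlling all remainders through Corollary~\ref{C2.3}. The only noteworthy difference is in how the mixed reciprocity is obtained: the paper first integrates by parts to write $u^s_{A,q}(x,y)=\int_D Q_{-A,q}\Phi(x,\cdot)\,u_{A,q}(\cdot,y)$ and then invokes a separately proved duality identity (Lemma~\ref{L.I}) to recognise $w(y,d)=u^s_{-A,q}(y,d)$, whereas you derive the same identification by passing to the limit $|z|\to\infty$ directly in the point-source reciprocity of Lemma~\ref{L3.1}, using the continuity of $\mathcal{M}_{-A,q}$ on $H^1(D)$. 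Your limiting argument is slightly more economical (it avoids stating and proving Lemma~\ref{L.I}), while the paper's version makes the algebraic mechanism behind the $A\mapsto -A$ flip more explicit; both are valid and the remainder bookkeeping you outline matches the paper's.
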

\begin{proof}
Using the asymptotics of $\Phi(\cdot ,z)$ and $\Dd_z \Phi (\cdot,z)$ for $z\in D$, we get
\begin{align}
\label{1.5a}
u_{A,q}^s(x,y)&=\int_D Q_{-A,q}\Phi (x,z)u_{A,q}(z,y)dz\\\label{1.5b}
&=\frac{e^{ik\vert x\vert}}{4\pi \vert x\vert} w\left( y, -\widehat{x}\right) +\frac{O(1)}{|x|^2} \|u_{A,q}(.,y)\|_{L^2(D)}, \hspace*{0.5cm} \vert x\vert \rightarrow \infty ,
\end{align}
uniformly with respect to $y \in \R^3\setminus B$, where
\begin{align*}
w(y,d):=\int_D Q_{-A,q}e^{ikz\cdot d} u_{A,q}(z,y)dz.
\end{align*}
Using now Lemma \ref{L.I} and integrating by parts, we get 
\begin{equation}
w(y,d)=- \int_D \Phi (z,y)Q_{-A,q}u_{-A,q}(z,d )dz,
\end{equation}
and therefore $w(\cdot,d)=  u_{-A,q}^s(\cdot, d)$. Consequently, \eqref{1.5b} and  \eqref{3.18}  imply that for $\vert x\vert$, $ \vert y\vert \rightarrow \infty$
 \begin{align}\label{1.9.}
u_{A,q}^s(x,y) &=\frac{1}{4\pi} \frac{e^{ik\vert x\vert}}{\vert x\vert} \frac{e^{ik\vert y\vert}}{\vert y\vert } u^\infty_{-A,q}\left(\widehat{y} , -\widehat{x} \right)+ \frac{O(1)}{|x|^2} \|u_{A,q}(\cdot,y)\|_{L^2(D)}\cr
&\qquad +  \frac{O(1)}{|x||y|^2} \|u_{-A,q}(\cdot,-\hat x)\|_{L^2(D)}.
\end{align}
Let us observe that according to Corollary \ref{C2.3} (and the asymptotic behaviour of $\Phi(x,y)$  and $\Dd_x \Phi (x,y)$ with $x\in D$ and $\vert y\vert \rightarrow \infty$) we get that 
$$
\|u_{A,q}(\cdot,y)\|_{L^2(D)}= \frac{O(1)}{|y|}\quad \mbox{ as }  \vert y\vert \rightarrow \infty,
$$ 
and  $\|u_{-A,q}(\cdot,-\hat x)\|_{L^2(D)} $ is uniformly bounded with respect to $\hat{x}$.  We finally obtain the desired result by noticing the reciprocity relation  $u^\infty_{A,q}(d, \theta)=u^\infty_{-A,q}(-\theta, -d)$ where $\theta ,d \in \mathbb{S}^2$ (which is also a consequence of Lemma \ref{L.I}) or using Lemma \ref{L3.1}.
\end{proof} 

Now, let us expand the scattering amplitude $u^\infty_{A,q}(d , \theta )$ in spherical harmonics
\begin{equation}
u^\infty_{A,q}(d ,\theta )= \underset{(\ell_1,m_1)\in \Gamma}{\sum}\; \underset{(\ell_2,m_2)\in \Gamma}{\sum}  \mu_{\ell_1 m_1\ell_2m_2}Y^{m_1}_{\ell_1}(d) Y^{m_2}_{\ell_1}(\theta ),
\end{equation}
where $\mu_{\ell_1 m_1\ell_2m_2}$ is given by \eqref{3.17}. 
\subsubsection*{Proof of Lemma \ref{L4.4.}}
Making use of the addition formula \cite{texbook2}, 
\begin{align}
\Phi (x,z)&=\underset{\ell,m}{\sum }\varepsilon_{\ell,m}(z) h^{(1)}_\ell (k\vert x\vert ) Y^m_\ell \left(\widehat{x}\right), \hspace*{0.5cm} \vert x\vert > \vert z\vert,\\
\nabla_y \Phi (x,z)&=\underset{\ell,m}{\sum} \varepsilon^\prime_{\ell,m} (z)h^{(1)}_{\ell}(k\vert x\vert) Y^m_\ell\left( \widehat{x}\right),\hspace*{0.5cm} \vert x\vert > \vert z\vert,
\end{align}
where $\varepsilon_{\ell,m}(z)= ikj_\ell(k\vert z\vert) Y^m_\ell \left( \widehat{z}\right)$ and $\varepsilon^\prime_{\ell,m}=ik \nabla \left( j_\ell(k\vert z\vert )Y^m_\ell\left( \widehat{z}\right)\right)$, it follows from \eqref{1.5a} that for  $y\in \R^3 \setminus D$ and uniformly for $ \vert x\vert \ge a$
\begin{align*}
u^s_{A,q}(x,y)&= \underset{(\ell_1,m_1)\in \Gamma}\sum \alpha_{\ell_1m_1}(y) h^{(1)}_{\ell_1}(k\vert x\vert)Y^{m_1}_{\ell_1}\left(\widehat{x}\right).
\end{align*}
Similarly, for $x\in \R^3 \setminus D$ and uniformly for $ \vert y\vert \ge a$
\begin{align*}
u^s_{-A,q}(y,x)&= \underset{(\ell_2,m_2)\in \Gamma}\sum \beta_{\ell_2m_2}(x) h^{(1)}_{\ell_2}(k\vert y\vert)Y^{m_2}_{\ell_2}\left(\widehat{y}\right).
\end{align*}
We observe that 
$$
\alpha_{\ell_1m_1}(y)  h^{(1)}_{\ell_1}(ka) = \int_{{\mathbb S}^2} u^s_{A,q}(a \hat{x},y)  \overline{Y^{m_1}_{\ell_1}}\left(\widehat{x}\right) ds(\hat{x}).
$$
Using the reciprocity relation of Lemma \ref{L3.1} we then get uniformly for $ \vert y\vert \ge a$
$$
\alpha_{\ell_1m_1}(y)   = \underset{(\ell_2,m_2)\in \Gamma}\sum \gamma_{\ell_1m_1 \ell_2m_2} h^{(1)}_{\ell_2}(k\vert y\vert)Y^{m_2}_{\ell_2}\left(\widehat{y}\right),
$$
where 
$$
\gamma_{\ell_1m_1 \ell_2m_2} = \frac{1}{h^{(1)}_{\ell_1}(ka)} \int_{{\mathbb S}^2} \beta_{\ell_2m_2}(a \hat{x})  \overline{Y^{m_1}_{\ell_1}}\left(\widehat{x}\right) ds(\hat{x}).
$$
This yields in particular that for $ \vert x\vert \ge a$ and $ \vert y\vert \ge a$
\begin{equation}
\label{expansionUS}
u^s_{A,q}(x,y)= \sum_{\substack{(\ell_1,m_1)\in \Gamma \\ (\ell_2,m_2)\in \Gamma}} \gamma_{\ell_1m_1 \ell_2m_2}  h^{(1)}_{\ell_1}(k\vert x\vert)Y^{m_1}_{\ell_1}\left(\widehat{x}\right) h^{(1)}_{\ell_2}(k\vert y\vert)Y^{m_2}_{\ell_2}\left(\widehat{y}\right).
\end{equation}
Observe that from Proposition \ref{P1}, we have in particular that $u^s_{A,q} \in L^2(S_R \times S_R)$ for  $R$ sufficiently large, with $S_R = \{|x|=R\}$. The orthonormality of products of spherical harmonics in $L^2({\mathbb S}^2 \times {\mathbb S}^2)$ implies in particular that the series \eqref{expansionUS} is convergent in $L^2({\mathbb S}^2 \times {\mathbb S}^2)$ and 
$$
 \int_{{\mathbb S}^2\times {\mathbb S}^2 } u^s_{A,q}(R \hat{x}, R \hat{y}) \overline{Y^{m_1}_{\ell_1}}\left(\widehat{x}\right)  \overline{Y^{m_2}_{\ell_2}}\left(\widehat{y}\right) ds(\hat x) ds(\hat{y}) =   \gamma_{\ell_1m_1 \ell_2m_2} h^{(1)}_{\ell_1}(kR)  h^{(1)}_{\ell_2}(kR).
$$
We recall that (\cite{texbook2}),
$$
h^{(1)}_\ell(r)=(-i)^{\ell+1}\frac{e^{ir}}{r}+O\left( r^{-2}\right),
$$
Proposition \ref{P1} and the identity $Y^{m_2}_{\ell_2}(-\hat y )=(-1)^{\ell_2} Y^{m_2}_{\ell_2}(\hat y )$ then imply, by integrating on $S_R \times S_R$ and letting $R\to \infty$ that 
$$
\gamma_{\ell_1m_1 \ell_2m_2} = \frac{-k^2}{4\pi} i^{\ell_1+\ell_2} (-1)^{\ell_2}  \mu_{\ell_1m_1 \ell_2m_2}.
$$
This yields the desired result. We remark that it also implies from \eqref{3.23} and \eqref{3.20} that the series \eqref{expansionUS} is absolutely convergent together with its first and second derivatives with respect to $x$ or $y$.


\begin{thebibliography}{99}




\bibitem{texbook4} H. Ben Joud, \emph{A stability estimate for an inverse problem for the Schr\"odinger equation in a magnetic field from partial boundary measurements}, Inverse Problems 25, 045012 (23 pp), (2009).

\bibitem{Caro-Pohjola} P. Caro and V. Pohjola, \emph{Stability estimates for an inverse problem for the magnetic Schr\"odinger operator}, International Mathematics Research Notices, 2015, (21), 11083-11116, (2015).

\bibitem{texbook2} D. Colton, and R. Kress, \emph{Inverse Acoustic and Electromagnetic Scattering Theory}, 3rd ed. New York, Springer, (2013).

\bibitem{DKSU2007}  D. Dos Santos Ferreira, C. E. Kenig, J. Sj\"ostrand and G. Uhlmann, \emph{Determining a magnetic Schr\"odinger operator from partial Cauchy data}, Comm. Mathematical Physics, 271, 467-488 (2007).

\bibitem{texbook1} G. Eskin and J. Ralston, \emph{Inverse scattering problem for the Schr\"odinger equation with magnetic potential at a fixed energy}, Commun. Math. Phys. 173, 199-224, (1995).

\bibitem{Grisv}P. Grisvard, Pierre, \emph{Elliptic Problems in Nonsmooth Domains}, Society for Industrial and Applied Mathematics, (2011).

\bibitem{texbook10} P. H\"ahner, \emph{Scattering by media}. Scattering, Academic Press, pp. 74-94,  (2002).

\bibitem{texbook3} P. H\"ahner, and T. Hohage, \emph{ New stability estimates for the inverse acoustic inhomogeneous medium problem and applications}. SIAM journal on mathematical analysis, 33(3), 670-685, (2001).

\bibitem{texbook28} M.I. Isaev, R.G. Novikov, \emph{New global stability estimates for monochromatic inverse acoustic scattering}, 
 SIAM Journal on Mathematical Analysis, 45 (3), pp. 1495-1504, (2013).



\bibitem{texbook22b} K. Krupchyk \emph{Inverse Transmission Problems for Magnetic Schr\"odinger Operators}, {International Mathematics Research Notices}, 2014(1), pp. {65-164}, (2012).

\bibitem{texbook22} K. Krupchyk and G. Uhlmann, \emph{Uniqueness in an inverse boundary problem for a magnetic Schr\"odinger operator with a bounded magnetic potential}, Comm. Math. Phys., 327, pp. 993-1009, (2014).

\bibitem{lionsmagenes} J.L. Lions and E. Magenes, Non-homogeneous Boundary Value Problems and Applications, vols. I, II, Springer-Verlag, Berlin, 1972.



\bibitem{texbook5.} C. J. Mero{\~n}o, L. Potenciano-Machado, and M. Salo, \emph{The fixed angle scattering problem with a first order perturbation,} arXiv preprint arXiv:2009.13315, (2020).

\bibitem{texbook017} A. Nachman, \emph{Reconstructions from boundary measurements}, Ann. of Math. 128 (1988) 531-576.


\bibitem{NSU95} G. Nakamura, Z. Sun, G. Uhlmann, \emph{Global identifiability for an inverse problem for
the Schr\"{o}dinger equation in a magnetic field}, Math. Ann., 303, 377-388 (1995).

\bibitem{texbook023} R.G. Novikov, \emph{A multidimensional inverse spectral problem for the equation $-\Delta\psi+(v(x)-Eu(x))\psi=0$}, Funct. Anal. Appl. 22 (1988) 263-272.





\bibitem{texbook026} A.G. Ramm, \emph{Recovery of the potential from fixed energy scattering data}, Inverse Problems 4 (1988) 877-886.

\bibitem{P-MR2018} L. Potenciano-Machado and A. Ruiz \emph{Stability estimates for a magnetic Schrödinger operator with partial data}, {Inverse Problems \& Imaging}, 12(6), 1309-1342 (2018).

\bibitem{texbook25} M. Salo, \emph{Semiclassical pseudo-differential calculus and the reconstruction of a magnetic field}, Commun, PDE 31, 1639-66, (2006).

\bibitem{texbook5} V. Serov, and J. Sandhu, \emph{Scattering solutions and Born approximation for the magnetic Schr\"odinger operator}, Inverse Problems in Science and Engineering, 27, Issue 4, 422 - 438, (2019).

\bibitem{texbook26.} P. Stefanov, \emph{Stability of the inverse problem in potential scattering at fixed energy}, Ann. Inst. Fourier (Grenoble) 40 (1990), 867-884.

\bibitem{texbook9} Z. Sun, \emph{An inverse boundary value problem for the Schr\"odinger operator with vector potentials}, Trans. Amer. Math. Soc., 338, 953-969 (1992).

\bibitem{texbook030} J. Sylvester, G. Uhlmann, \emph{A global uniqueness theorem for an inverse boundary value problem}, Ann. of Math. 125 (1987),
153-169.

\bibitem{texbook6}  L. Tzou, \emph{Stability estimates for coefficients of magnetic Schr\"odinger equation from full and partial boundary measurements}, Communication in Partial Differential Equations 33, 1911-1952, (2008).
\end{thebibliography}
\end{document}